\documentclass[11pt]{article}

\usepackage[T1]{fontenc}
\usepackage[utf8]{inputenc}
\usepackage{lmodern}
\usepackage{microtype}
\usepackage[a4paper,margin=30mm]{geometry}
\usepackage{amsmath,amssymb,amsthm,mathtools}
\usepackage{booktabs,tabularx,array}
\usepackage{enumitem}
\usepackage{xcolor}
\usepackage{hyperref}
\usepackage{aliascnt}
\usepackage[nameinlink,capitalise,noabbrev]{cleveref}
\usepackage{fancyhdr}
\usepackage{placeins}

\hypersetup{
  colorlinks=true,
  linkcolor=blue!45!black,
  citecolor=green!35!black,
  urlcolor=blue!55!black,
  pdftitle={Deformations of Kähler and Balanced Hyperbolicity},
  pdfsubject={Deformation criteria, Hodge loci, cohomological obstructions, and Lefschetz saturation for balanced and Kähler hyperbolicity},
  pdfkeywords={balanced hyperbolicity, Kähler hyperbolicity, deformations of complex structures, Aeppli cohomology, d-tilde-bounded cohomology, Lefschetz saturation}
}

\setlist{nosep,leftmargin=2em}
\allowdisplaybreaks

\newtheorem{theorem}{Theorem}[section]

\newaliascnt{proposition}{theorem}
\newtheorem{proposition}[proposition]{Proposition}
\aliascntresetthe{proposition}

\newaliascnt{lemma}{theorem}
\newtheorem{lemma}[lemma]{Lemma}
\aliascntresetthe{lemma}

\newaliascnt{corollary}{theorem}
\newtheorem{corollary}[corollary]{Corollary}
\aliascntresetthe{corollary}

\newaliascnt{criterion}{theorem}
\newtheorem{criterion}[criterion]{Criterion}
\aliascntresetthe{criterion}

\theoremstyle{definition}

\newaliascnt{definition}{theorem}
\newtheorem{definition}[definition]{Definition}
\aliascntresetthe{definition}

\newaliascnt{question}{theorem}
\newtheorem{question}[question]{Question}
\aliascntresetthe{question}

\theoremstyle{remark}

\newaliascnt{remark}{theorem}
\newtheorem{remark}[remark]{Remark}
\aliascntresetthe{remark}

\newaliascnt{warning}{theorem}

\aliascntresetthe{warning}

\newcommand{\ddbar}{\partial\bar\partial}
\newcommand{\dbar}{\bar\partial}

\newcommand{\DR}{\mathrm{DR}}
\newcommand{\BC}{\mathrm{BC}}
\newcommand{\Aep}{\mathrm{A}}
\newcommand{\im}{\operatorname{Im}}
\newcommand{\kerop}{\operatorname{Ker}}
\newcommand{\cB}{\mathcal B}
\newcommand{\Vhyp}{V_{\widetilde d}}

\newcommand{\R}{\mathbb R}
\newcommand{\C}{\mathbb C}
\newcommand{\D}{\mathbb D}

\title{\bfseries Deformations of K\"{a}hler and Balanced Hyperbolicity}
\author{Jixiang Fu\and Jingcao Wu}
\date{}

\begin{document}
\maketitle

\begin{abstract}
We study the deformation stability of K\"ahler and balanced hyperbolicity. Balanced hyperbolicity is not open in general: in every complex dimension $N\geq5$ we construct a one-parameter family with balanced hyperbolic central fibre and non-balanced nearby fibres. For positive results, we develop three complementary mechanisms. A finite-dimensional moving-intersection framework tracks $\widetilde d$-bounded de Rham classes through moving pure-type loci; its Aeppli and Dolbeault realizations yield continuation, transversality, and positivity criteria for balanced and K\"ahler hyperbolicity. A topological mechanism combines the graded-ideal property of hyperbolic cohomology with the hard Lefschetz theorem to obtain saturation and higher-power propagation results. Finally, on the universal cover, we reduce the passage from a bounded $(\partial+\bar\partial)$-potential to a bounded $d$-primitive to a single bounded top-row $\partial$-equation, and package the dependence on the potential into a canonical quotient obstruction. Together, these viewpoints yield a range of deformation stability results.
\end{abstract}

\medskip
\noindent\textbf{2020 Mathematics Subject Classification.}
Primary 32G05; Secondary 32Q15, 53C55.

\noindent\textbf{Keywords.}
K\"ahler hyperbolicity; balanced hyperbolicity; deformations of complex structures.

\tableofcontents

\section{Introduction}

Let $(X,g)$ be a compact Riemannian manifold. A differential form $\gamma$ is called $\widetilde d$-bounded if, on the universal cover $p:\widetilde X\to X$,
\[
p^{*}\gamma=d\Gamma,\qquad \|\Gamma\|_{L^{\infty}(\widetilde X,p^{*}g)}<\infty
\]
for some form $\Gamma$ of one lower degree. We now turn to the complex setting. Throughout, all complex manifolds are connected and have complex dimension $n\geq2$. We use the convention $A^{p,q}=0$ whenever $p<0$, $q<0$, $p>n$, or $q>n$. Following \cite{Gro91,MP23}, a K\"{a}hler (resp. balanced) metric $\omega$ is \emph{K\"{a}hler hyperbolic} (resp. \emph{balanced hyperbolic}) if it (resp. the $(n-1)$-th power) is $\widetilde d$-bounded.

We ask whether K\"ahler and balanced hyperbolicity are stable under small holomorphic deformations. Without additional hypotheses, the answer is negative in the balanced case.

\begin{theorem}\label{thm:main-counterexample}
For every integer $N\geq5$, there exists a one-parameter analytic family of compact
complex $N$-folds $\{Z_t\}_{t\in\Delta}$ such that $Z_0$ is balanced hyperbolic, whereas
$Z_t$ is not balanced for every sufficiently small $t\neq0$.
\end{theorem}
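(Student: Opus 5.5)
We construct the family as a product $Z_t=X_t\times Y$. Here $\{X_t\}$ is a known small deformation of a balanced manifold whose nearby fibres are not balanced, and $Y$ is a compact Kähler hyperbolic manifold, such as a compact quotient of the ball or a product of compact Riemann surfaces of genus at least two. The basic input is the classical example built from the Iwasawa manifold. The Iwasawa manifold $I_3=\Gamma\backslash H_3(\C)$ is a compact complex $3$-fold carrying a balanced metric. Alessandrini--Bassanelli showed that it admits small deformations $X_t$ that are not balanced for $t\neq0$. The reason is that on these deformations there is a nonzero $d$-exact positive $(2,2)$-current (equivalently, a nonzero closed positive $(1,1)$-current which is a $(1,1)$-component of a boundary), and by Michelsohn's characterization this forbids a balanced metric. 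On the universal cover $\C^3$, the left-invariant balanced form $\omega_0^{2}$ on $I_3$ pulls back to a $d$-exact form whose primitive can be chosen left-invariant. Hence the primitive is bounded for the invariant metric, and $I_3$ is itself balanced hyperbolic. This is where nilmanifolds help: a left-invariant $d$-exact form on a nilpotent Lie group has a left-invariant primitive precisely when it is exact in the Chevalley--Eilenberg complex, and one checks this directly using the structure equations $d\varphi^3=-\varphi^1\wedge\varphi^2$.

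Next, take $Y$ Kähler hyperbolic of complex dimension $m=N-3\geq2$ with Kähler form $\omega_Y$, so that $p^*\omega_Y=d\eta$ with $\eta$ bounded. On $Z_0=I_3\times Y$, use the product metric $\omega=\omega_0+\omega_Y$. Then
\[
\omega^{N-1}=\binom{N-1}{2}\,\omega_0^{2}\wedge\omega_Y^{m}+\binom{N-1}{3}\,\omega_0^{3}\wedge\omega_Y^{m-1}.
\]
The metric is balanced because $d\omega_0^2=0$ and $\omega_Y$ is closed. Each summand contains either the $\widetilde d$-bounded factor $\omega_0^2$ or a power $\omega_Y^{k}$ with $k\geq1$, and $\omega_Y^{k}$ is $\widetilde d$-bounded via $d(\eta\wedge\omega_Y^{k-1})$. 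A wedge product of a $\widetilde d$-bounded closed form with a bounded closed form is again $\widetilde d$-bounded. The universal cover of the product is the product of the universal covers, so $\omega^{N-1}$ is $\widetilde d$-bounded, and $Z_0$ is balanced hyperbolic.

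The step I expect to be the main obstacle is showing that $Z_t=X_t\times Y$ is not balanced. For this I will use Michelsohn's criterion: a compact complex manifold is balanced if and only if there is no nonzero positive $(1,1)$-current that is the $(1,1)$-component of a $d$-exact current. Let $T_t$ be the obstructing current on $X_t$, and pull it back by the projection to the $(1,1)$-current $\mathrm{pr}_1^*T_t$ on $Z_t$. This current is positive, nonzero, and still the $(1,1)$-part of a $d$-boundary, so $Z_t$ is not balanced. An alternative is to integrate any balanced metric on $Z_t$ over fibres to produce one on $X_t$: pushing forward $\Omega^{N-1}$ wedged with $\mathrm{pr}_2^*\omega_Y$ restricted appropriately gives a strictly positive closed $(2,2)$-form on $X_t$, hence a balanced metric there, which is a contradiction.

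For the dimension count, this construction gives every $N=3+m$ with $m\geq2$, that is, every $N\geq5$. This matches the bound in the statement and explains why $N\geq5$ is required.
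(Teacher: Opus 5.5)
Your construction and the balanced hyperbolicity of $Z_0$ match the paper, and the argument is correct in the end. The construction is $Z_t=I_t\times Y$ with $Y$ K\"ahler hyperbolic of dimension $m=N-3\geq2$. The two-term binomial expansion of $\omega^{N-1}$ and the bounded primitives built from $\omega_Y^k=d(\eta\wedge\omega_Y^{k-1})$ are exactly \cref{prop:stabilization}.

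However, one claim is false and should be deleted: the Iwasawa manifold is \emph{not} balanced hyperbolic, and $\omega_0^2$ has no left-invariant primitive. With $e_j=\frac i2\varphi^j\wedge\overline{\varphi^j}$ as in the paper, $e_2$ is $d$-closed and $\omega_0^2\wedge e_2=2e_1e_2e_3=\frac13\omega_0^3$. Hence $[\omega_0^2]_{\DR}\neq0$, so $\omega_0^2$ is not Chevalley--Eilenberg exact. Moreover, if $\omega_0^2$ were $\widetilde d$-bounded, \cref{prop:graded-ideal} would make the volume form $\frac13\omega_0^3$ $\widetilde d$-bounded. Then $\mathrm{vol}(D)=\int_{\partial D}\Gamma\leq C\,\mathrm{area}(\partial D)$ for every compact domain $D$ upstairs. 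This fails along F\o lner domains, since $\pi_1(I_3)$ is nilpotent, hence amenable.

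Your proof survives only because of the ``or'' in ``either $\omega_0^2$ or $\omega_Y^k$''. Both summands contain $\omega_Y^k$ with $k\geq1$, and the term $\omega_0^3\wedge\omega_Y^{m-1}$ can be handled \emph{only} through $\omega_Y^{m-1}$. That is the real reason for $m\geq2$, and it is why the paper needs only that $I_0$ is balanced. Had $I_3$ been balanced hyperbolic, the Alessandrini--Bassanelli family alone would settle $N=3$, which the paper leaves open.

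For the non-balancedness of $Z_t$, your primary argument is a valid dual of \cref{lem:factor-balanced}. You pull back Michelsohn's obstruction: a non-zero positive $(1,1)$-current $T_t=(dS_t)^{1,1}$ on $I_t$ gives $\mathrm{pr}_1^*T_t=(d\,\mathrm{pr}_1^*S_t)^{1,1}$ on $Z_t$. This current is non-zero: test it against $\mathrm{pr}_1^*\psi\wedge\mathrm{pr}_2^*\mu$ with $\int_Y\mu=1$. It is positive because $\langle\mathrm{pr}_1^*T_t,\phi\rangle=\langle T_t,\mathrm{pr}_{1*}\phi\rangle$ and fibre integration sends positive $(N-1,N-1)$-forms to positive $(2,2)$-forms.

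The paper instead pushes a balanced form $h^{N-1}$ forward along the fibres and takes the Michelsohn root of $q_*h^{N-1}$. This uses the same positivity of fibre integrals but needs only the root theorem, not the Hahn--Banach duality behind Michelsohn's current criterion, so it is more elementary. Your version instead shows directly that the obstruction current propagates to the product.

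Your ``alternative'' sketch is the paper's argument with a bidegree slip. One pushes forward $\Omega^{N-1}$ itself, of bidegree $(N-1,N-1)$, which lands in bidegree $(2,2)$. Wedging first with $\mathrm{pr}_2^*\omega_Y$ produces a top-degree form on $X_t$ and carries no information.

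Your opening description of the Alessandrini--Bassanelli obstruction is also inaccurate: it is neither ``a $d$-exact positive $(2,2)$-current'' nor a ``closed'' positive $(1,1)$-current. The correct obstruction is the one you state later: a non-zero positive $(1,1)$-current that is the $(1,1)$-component of a boundary, with no closedness requirement.
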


The proof is given in \cref{sec:counterexample}. It leaves open the sharper question of whether balanced hyperbolicity persists when the nearby fibres are assumed to remain balanced.

\begin{question}\label{q:balanced-locus}
Suppose $X_0$ is balanced hyperbolic and every sufficiently small $X_t$ is known to be
balanced. Must every sufficiently small $X_t$ be balanced hyperbolic?
\end{question}

After an Ehresmann identification, the de Rham cohomology and the subspaces of $\widetilde d$-bounded classes are fixed by the underlying smooth manifold and its universal cover, whereas Hodge type and positivity vary with the complex structure. This fixed-versus-moving dichotomy is the first organizing principle of the paper.

\paragraph{The finite-dimensional moving-intersection template.} Fix a degree $k$, put $H^k=H^k_{\DR}(X,\R)$, and let $V^k\subset H^k$ be a fixed subspace. A geometric problem supplies a moving admissible subspace $\mathcal P_t^k\subset H^k$. In \cref{sec:finite-template} we assume only that $\mathcal P_t^k$ is the kernel of a smooth finite-dimensional off-type map $\mathcal O_t^k:H^k\to\mathcal E_t^k$. We derive the universal first variation, identify the infinitesimal obstruction in $\operatorname{coker}(\mathcal O_0^k|_{V^k})$, and prove the constant-rank and surjectivity continuation criteria. The continuation theorem is intentionally linear-algebraic: it continues classes in $V^k\cap\mathcal P_t^k$ but does not assert positivity.

\subparagraph{The balanced ($k=2n-2$) realization.} Put
\[
H=H^{2n-2}_{\DR}(X,\R),\qquad V:=\Vhyp^{2n-2}(X),
\]
and let $\mathcal P_t\subset H$ denote the classes admitting a real $d$-closed $(n-1,n-1)$-representative on $X_t$. In \cref{sec:compact-program} the abstract off-type datum is realized in Aeppli cohomology. Its concrete first variation identifies the deformation
term as $[\iota_\kappa\Omega_0]_{\Aep}$, where $\Omega_0$ is a strictly positive real $d$-closed $(n-1,n-1)$-representative of the central balanced hyperbolic class. Parametric Aeppli--Hodge theory then supplies the small exact correction needed to recover positivity. \Cref{thm:constant-rank} allows the balanced hyperbolic class to move inside $V$ when $h^{n-2,n}_{\Aep}(X_t)$ and $\dim_{\R}(V\cap\mathcal P_t)$ are locally constant. The vanishing-target case is \cref{thm:aeppli-vanishing}; when, in addition, the central fibre is a $\ddbar$-manifold, \cref{cor:ddbar-transverse} gives a dual transversality criterion.

\begin{theorem}\label{thm:aeppli-vanishing}
Let $\pi:\mathcal X\to\Delta$ be a holomorphic family of compact complex $n$-folds. If
$X_0$ is balanced hyperbolic and $H^{n-2,n}_{\Aep}(X_0)=0$, then $X_t$ is balanced hyperbolic for every sufficiently small $t$. Equivalently, by Bott--Chern/Aeppli duality, it is enough that $h^{2,0}_{\BC}(X_0)=0$.
\end{theorem}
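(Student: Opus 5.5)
Let $\omega_0$ be a balanced hyperbolic metric on $X_0$, so $\Omega_0:=\omega_0^{n-1}$ is a strictly positive, real, $d$-closed $(n-1,n-1)$-form that is $\widetilde d$-bounded. Its class $\mathfrak a:=[\Omega_0]$ lies in $V=\Vhyp^{2n-2}(X)$. After an Ehresmann trivialization of $\pi$ we regard all $X_t$ as one smooth manifold $X$ with moving complex structures $J_t$. The strategy is to produce real $d$-closed $(n-1,n-1)_t$-forms $\Omega_t\to\Omega_0$ in $C^\infty$ with $[\Omega_t]=\mathfrak a$ for all $t$. Then $\Omega_t$ is strictly positive for small $t$, because positivity is an open condition and the fibres are compact. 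By Michelsohn's theorem, $\Omega_t=\omega_t^{n-1}$ for a unique balanced metric $\omega_t$ on $X_t$. Finally, $\Omega_t-\Omega_0=d\beta_t$ with $\beta_t$ a smooth form on the compact manifold $X$, so $p^*\beta_t$ is bounded. Hence $p^*\Omega_t=d(\Gamma_0+p^*\beta_t)$ is again $\widetilde d$-bounded, and $\omega_t$ is balanced hyperbolic. The point is that $\widetilde d$-boundedness depends only on the class in the fixed space $V$.

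To construct $\Omega_t$, start from $\Omega_0$ and decompose it with respect to $J_t$. Only the components of type $(n,n-2)_t$, $(n-1,n-1)_t$ and $(n-2,n)_t$ survive for small $t$, since types differing by more are absent in total degree $2n-2$ within the $n$-dimensional range. Write $\Omega_0=\Omega_0^{n,n-2}(t)+\Omega_0^{n-1,n-1}(t)+\overline{\Omega_0^{n,n-2}(t)}$, where the off-type part is $O(t)$. We look for a correction $\Omega_t=\Omega_0+d\eta_t$ whose $(n-2,n)_t$-component vanishes; reality then kills the $(n,n-2)_t$-component as well. Closedness of $\Omega_0$ makes the off-type component $\gamma_t:=\Omega_0^{n-2,n}(t)$ $\bar\partial_t$-closed, because $\bar\partial_t\gamma_t$ is the $(n-2,n+1)$ part, which is zero. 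Taking the $(n-2,n)$-component of $d\Omega_0=0$ at bidegree $(n-1,n)$ gives $\partial_t\gamma_t+\bar\partial_t(\Omega_0^{n-1,n-1})=0$. So $\gamma_t$ defines a class in Aeppli cohomology $H^{n-2,n}_{\Aep}(X_t)$; this is the off-type map $\mathcal O_t$ of \cref{sec:compact-program}, evaluated on $\mathfrak a$.

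Now $H^{n-2,n}_{\Aep}(X_0)=0$, and $h^{n-2,n}_{\Aep}$ is upper semicontinuous, so $H^{n-2,n}_{\Aep}(X_t)=0$ for small $t$. Thus $\gamma_t\in\im\partial_t+\im\bar\partial_t$. Using the parametric Aeppli--Hodge theory the paper invokes, with the Aeppli Laplacians of a smooth family of metrics and Kodaira--Spencer-type continuity of Green operators when the kernel dimension is constant, here identically zero, we get
\[
\gamma_t=\partial_t\alpha_t+\bar\partial_t\beta_t
\]
with $\alpha_t,\beta_t$ depending smoothly on $t$ and $O(t)$ in $C^\infty$. We then need a correction $d\eta_t$ that cancels the $(n-2,n)$ part. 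Writing $\gamma_t=\bar\partial_t\beta_t+\partial_t\alpha_t$, where $\alpha_t$ has type $(n-3,n)$ and $\beta_t$ has type $(n-2,n-1)$, take
\[
\eta_t=-\bigl(\alpha_t+\beta_t+\overline{\alpha_t+\beta_t}\bigr).
\]
The $(n-2,n)$-part of $d\eta_t$ is $-(\partial_t\alpha_t+\bar\partial_t\beta_t)=-\gamma_t$; the term $\bar\partial\alpha$ vanishes for type reasons, and the conjugate terms only contribute in types $(n,\cdot)$ and $(n-1,\cdot)$. By conjugation the $(n,n-2)$ part cancels as well. The form $\Omega_t:=\Omega_0+d\eta_t$ is therefore real, $d$-closed, of pure type $(n-1,n-1)_t$, lies in the class $\mathfrak a$, and is $C^\infty$-close to $\Omega_0$. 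This proves the first assertion. (Alternatively, this is the vanishing-target case of the surjectivity criterion of \cref{sec:finite-template}: $\mathcal E_t=0$ makes $\mathcal P_t=H$, so $V\cap\mathcal P_t=V$.)

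The main obstacle is the uniformity in $t$. We need $\alpha_t,\beta_t\to0$ in $C^0$, so that positivity survives. This requires the Aeppli Green operator to vary continuously, which uses the locally constant (zero) dimension; the estimate $\|\alpha_t\|+\|\beta_t\|\lesssim\|\gamma_t\|=O(t)$ follows from elliptic estimates uniform in $t$. For the equivalent formulation, Bott--Chern/Aeppli duality gives $H^{n-2,n}_{\Aep}(X_0)\cong H^{2,0}_{\BC}(X_0)^*$ via the pairing $\int\alpha\wedge\beta$, so $h^{n-2,n}_{\Aep}(X_0)=h^{2,0}_{\BC}(X_0)$ and the two hypotheses coincide.
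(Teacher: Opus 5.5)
Your proof is correct and is essentially the paper's argument. The paper gets the theorem as the vanishing-target case of its surjectivity and constant-rank continuation criterion: there the class may be held fixed, and positivity is restored exactly as you do it. Namely, the $J_t$-$(n-2,n)$-component of $\Omega_0$ is written as $\partial_t v_t+\dbar_t u_t$ using smoothly varying Aeppli homotopy operators (available because $h^{n-2,n}_{\Aep}(X_t)\equiv0$ by upper semicontinuity), and then $d(v_t+u_t+\overline{u_t}+\overline{v_t})$ is subtracted. The only blemishes are cosmetic: the garbled sentence about the $(n-1,n)$-component of $d\Omega_0$, which is not needed since $\dbar_t\gamma_t=0$ by bidegree, and reusing $\beta_t$ for two different forms.
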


\subparagraph{The K\"ahler ($k=2$) realization.} In \cref{sec:kahler-program} put
\[
H_{\mathrm K}:=H^2_{\DR}(X,\R),\qquad V_{\mathrm K}:=\Vhyp^2(X),
\]
and let $\mathcal P_t^{\mathrm K}\subset H_{\mathrm K}$ consist of the classes admitting
a real $d$-closed $(1,1)$-representative on $X_t$. The abstract template is now realized by the Dolbeault $(0,2)$-component, and the geometric realization uses the $\dbar$-Green operator to restore a positive $(1,1)$-representative. This gives the following continuation theorem.

\begin{theorem}\label{thm:kahler-main}
Let $\pi:\mathcal X\to\Delta$ be a holomorphic family of compact complex $n$-folds. If $X_0$ is K\"{a}hler hyperbolic, and $\dim_{\R}\bigl(V_{\mathrm K}\cap\mathcal P_t^{\mathrm K}\bigr)$ is locally constant, then $X_t$ is K\"{a}hler hyperbolic for all sufficiently small $t$.
\end{theorem}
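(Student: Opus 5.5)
Our plan is to run the finite-dimensional template of \cref{sec:finite-template} with $k=2$ and then repair positivity with a small exact correction.

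\emph{Step 1: setting up the off-type map.} Trivialize the family by Ehresmann, so that $X_t$ is $X$ with complex structure $J_t$, and all $X_t$ share the fixed space $H_{\mathrm K}$ and the fixed subspace $V_{\mathrm K}$, which depends only on the smooth structure and the universal cover. Let $\omega_0$ be a K\"ahler hyperbolic metric on $X_0$. By standard Kodaira--Spencer theory, $X_t$ is K\"ahler for small $t$. So the Hodge decomposition and $\ddbar$-lemma hold uniformly, $h^{0,2}_{\dbar}(X_t)$ is constant, and the spaces $\mathcal H^{0,2}_t$ of $\dbar$-harmonic $(0,2)$-forms form a smooth bundle. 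We realize $\mathcal O_t:H_{\mathrm K}\to\mathcal E_t:=H^{0,2}_{\dbar}(X_t)$ by taking a class to the $(0,2)$-part of its harmonic representative. A real class lies in $\mathcal P^{\mathrm K}_t$ exactly when its $(0,2)$-component vanishes, since the $(2,0)$-component is the conjugate; so $\mathcal P_t^{\mathrm K}=\ker\mathcal O_t$. This map is smooth in $t$.

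\emph{Step 2: continuation.} Since $\dim_\R(V_{\mathrm K}\cap\mathcal P_t^{\mathrm K})=\dim_\R\ker(\mathcal O_t|_{V_{\mathrm K}})$ is locally constant, the restricted maps $\mathcal O_t|_{V_{\mathrm K}}$ have constant rank. The constant-rank continuation criterion of \cref{sec:finite-template} then gives a smooth family of classes $a_t\in V_{\mathrm K}\cap\mathcal P^{\mathrm K}_t$ with $a_0=[\omega_0]$.

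\emph{Step 3: positivity, the main obstacle.} The template supplies only cohomology classes, so we must produce a positive $(1,1)$-representative of $a_t$ that depends continuously on $t$. Let $\alpha_t$ be the $\Delta_{d,t}$-harmonic representative of $a_t$ with respect to a smooth family of Hermitian metrics $g_t$ extending the K\"ahler metric $\omega_0$. By Kodaira--Spencer, $\alpha_t$ depends smoothly on $t$ in $C^\infty$ and $\alpha_0=\omega_0$. For K\"ahler $X_t$ the harmonic representative is pure, so since $a_t\in\mathcal P^{\mathrm K}_t$, $\alpha_t$ is of type $(1,1)$ for $J_t$.

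If we prefer to avoid the uniform $\ddbar$ machinery, we can instead take the $J_t$-$(1,1)$ projection of a smooth family of closed representatives. Its failure to be closed is controlled by the off-type components, which are $d$-exact once $\mathcal O_t(a_t)=0$. These components are removed by the $\dbar$-Green operator correction $\dbar^*G_t$, which is continuous in $t$, giving a real closed $J_t$-$(1,1)$ form $\Omega_t\to\omega_0$ in $C^0$.

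Either way, positivity is an open condition on a compact manifold, so $\Omega_t>0$ for small $t$. Hence $\Omega_t$ is a K\"ahler metric on $X_t$.

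\emph{Step 4: hyperbolicity.} Since $[\Omega_t]=a_t\in V_{\mathrm K}=\Vhyp^2(X)$, there is a form $\beta$ with $p^*\Omega_t=d\Gamma+dp^*\beta$, where $\Gamma$ is bounded. Because $X$ is compact, $p^*\beta$ is bounded, so $\Gamma+p^*\beta$ is a bounded primitive of $p^*\Omega_t$. Moreover, $p^*g_t$ is uniformly equivalent to $p^*g_0$, so boundedness is independent of $t$. Therefore $\Omega_t$ is K\"ahler hyperbolic.

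The delicate points are the smooth dependence of the representatives in Step 3, which needs the constancy of $h^{0,2}$ to apply Kodaira--Spencer, and the verification of the hypotheses of the continuation theorem in Step 2.
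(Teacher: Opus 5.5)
Your proof is correct, and its skeleton matches the paper's: a Dolbeault off-type map with kernel $\mathcal P_t^{\mathrm K}$, constant-rank continuation to get $a_t\in V_{\mathrm K}\cap\mathcal P_t^{\mathrm K}$, positivity by openness, and hyperbolicity by \cref{lem:cohomological}. Your main positivity step differs from the paper's, though.

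\textbf{How the paper does it.} The paper fixes one Riemannian metric and sets $\Omega_t=\omega_0+s(a_t-a_0)$. It then removes the $(0,2)$-part $A_t$ by subtracting $d(u_t+\overline{u_t})$, where $u_t=\dbar_t^*G_{\dbar,t}A_t$. This needs only smoothly varying $\dbar_t$-Green operators, i.e.\ constancy of $h^{0,2}$. It is also the exact analogue of the Aeppli correction in degree $2n-2$, where harmonic forms are not of pure type.

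\textbf{How you do it, and the fix it needs.} You take the $\Delta_{d,g_t}$-harmonic representative of $a_t$. Purity is then automatic, no correction is needed, and smooth dependence only uses constancy of $b_2$. The price is that $g_t$ must be \emph{K\"ahler} for $J_t$, and $g_0$ must equal $\omega_0$ exactly.
\begin{itemize}
\item For a merely Hermitian $g_t$, as you wrote, $\Delta_d$ mixes bidegrees. The harmonic representative of a $(1,1)$-class then need not be of type $(1,1)$.
\item The condition $g_0=\omega_0$ is what gives $\alpha_0=\omega_0$, since $d^*\omega_0=0$.
\item Both requirements are supplied by the Kodaira--Spencer stability theorem, which extends a given K\"ahler form on $X_0$ to a smooth family of K\"ahler forms on $X_t$.
\end{itemize}
With ``Hermitian'' replaced accordingly, your argument is complete. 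Purity then holds because the $(0,2)$-component of a K\"ahler-harmonic form is $\dbar_t$-harmonic. When $a_t\in\mathcal P_t^{\mathrm K}$, that component is also $\dbar_t$-exact by \cref{prop:kahler-offtype}, hence zero.

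\textbf{Comparing the two.} Your route is cleaner but leans on the full Kodaira--Spencer extension of the specific metric $\omega_0$. The paper's route needs only smooth Green operators, and it works uniformly for both realizations.

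\textbf{Your fallback route.} This is essentially the paper's, but tighten the wording. The off-type part $A_t+\overline{A_t}$ is not itself $d$-exact. What holds is $A_t=\dbar_tu_t$, and then $\Omega_t-d(u_t+\overline{u_t})$ is the pure $(1,1)$-representative.
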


For a K\"{a}hler hyperbolic metric $\omega_0$ on the central fibre, \cref{sec:kahler-program} identifies the abstract first-variation term with $[\iota_\kappa\omega_0]_{\dbar}$ and specializes the general constant-rank, surjectivity, and dual-transversality criteria. The vanishing-target case $h^{2,0}(X_0)=0$ gives an independent proof of the statement of \cite[Theorem~4.23]{Khe25KH}. The dual criterion also yields a Hodge--Riemann consequence. For a K\"ahler hyperbolic class $a$, set
\[
Q_a(\alpha,\beta):=\int_X\alpha\wedge\beta\wedge a^{n-2}.
\]
If the restriction of $Q_a$ to $V_{\mathrm K}$ has the maximal possible positive index $1+2h^{2,0}(X_0)$, then K\"ahler hyperbolicity is deformation open; see \cref{cor:kahler-polarized-positive-index}. A second sufficient condition comes from a positive line bundle that persists over the deformation: if $\mathcal L\to\mathcal X$ is holomorphic, $L_0:=\mathcal L|_{X_0}$ is positive, and $c_1(L_0)\in V_{\mathrm K}$, then all sufficiently small fibres are K\"ahler hyperbolic; see \cref{cor:kahler-linebundle}. This line-bundle criterion is subsumed by the more general Hodge-locus criterion \cref{cor:kahler-hodge-locus}: for any prescribed K\"ahler hyperbolic class $c_0\in\mathcal K_0\cap V_{\mathrm K}$, it is enough that $0$ be a non-isolated point of its Hodge locus. In the rational case, \cref{cor:kahler-rational-globalization} strengthens this conclusion by globalizing a positive integral multiple of the persistent class to a holomorphic line bundle on the total family via the Rao--Tsai exponential-sequence mechanism \cite{RT22}. The canonical choice $\mathcal L=K_{\mathcal X/\Delta}$ gives the canonical-class criterion $c_1(K_{X_0})\in V_{\mathrm K}$ as a special case in which extension is automatic. Another consequence is that $h^{1,1}(X_0)=1$ already forces deformation openness; see \cref{cor:kahler-h11-one}. For surfaces, \cref{subsec:surface-pg} gives a concrete restricted-period matrix for the same degree-two obstruction.

Two further mechanisms, one topological and one on the universal cover, complement the finite-dimensional picture.
 
\paragraph{Lefschetz saturation.} The fact that hyperbolic cohomology classes form a graded ideal is standard; see \cite[Remark~4.5]{BKS24}. We combine this property with Lefschetz surjectivity for a bounded power to obtain the saturation criterion in \cref{prop:lefschetz-saturation}. This yields a stronger balanced hyperbolic consequence than ordinary K\"ahler hyperbolicity alone:

\begin{theorem}\label{t14}
Let $\pi:\mathcal X\to\Delta$ be a holomorphic family of compact complex $n$-folds with $n\geq3$, and let $k$ be an integer with $1\leq k\leq n-2$. If $X_0$ is K\"ahler $k$-hyperbolic, then every balanced complex structure on the underlying smooth manifold is balanced hyperbolic. In particular, every sufficiently small holomorphic deformation $X_t$ of $X_0$ is balanced hyperbolic.
\end{theorem}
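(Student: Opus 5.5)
Here \emph{K\"ahler $k$-hyperbolic} means that $X_0$ carries a K\"ahler metric $\omega_0$ whose power $\omega_0^k$ is $\widetilde d$-bounded. The plan is to show that the whole of $H^{2n-2}_{\DR}(X,\R)$ consists of $\widetilde d$-bounded classes. Then any $d$-closed real $(2n-2)$-form on the underlying manifold is $\widetilde d$-bounded. In particular, for a balanced metric $\omega$ on any complex structure, $\omega^{n-1}$ is $d$-closed and hence $\widetilde d$-bounded.

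First, identify the subspace $\Vhyp^{\bullet}(X)$ of $\widetilde d$-bounded de Rham classes. This notion depends only on the smooth structure and the universal cover, and is independent of the complex structure and of the reference metric by compactness. We use the graded-ideal property \cite[Remark~4.5]{BKS24}. If $a=[\alpha]$ is $\widetilde d$-bounded with $p^*\alpha=d\Gamma$ and $\Gamma$ bounded, and $b=[\beta]$ is any class with $\beta$ closed, then $p^*(\alpha\wedge\beta)=d(\Gamma\wedge p^*\beta)$. Since $p^*\beta$ is bounded (it is pulled back from a compact manifold), $a\cup b\in\Vhyp$. Apply this with $a=[\omega_0^k]$.

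Next, invoke hard Lefschetz on the compact K\"ahler manifold $(X_0,\omega_0)$. With $L=[\omega_0]\wedge\cdot$ and $m:=n-2-k\geq 0$, the map
\[
L^{k}\colon H^{2n-2-2k}_{\DR}(X,\R)\to H^{2n-2}_{\DR}(X,\R)
\]
is surjective. Indeed, in degree $j=2n-2-2k\geq n$, which holds because $k\leq n-2$ forces $2n-2-2k\geq 2$, the map $L^k$ is not the full Lefschetz isomorphism. However, $H^{2n-2}=L^{n-1}H^0\oplus L^{n-2}(\text{primitive }H^2)$, and every class in $H^{2n-2}$ is $L^{n-1}$ or $L^{n-2}$ of a class in degree $0$ or $2$. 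Since $k\leq n-2$, both $L^{n-1}$ and $L^{n-2}$ factor through $L^k$. Hence every class in $H^{2n-2}$ has the form $[\omega_0]^k\cup b$, which lies in $\Vhyp^{2n-2}$ by the ideal property. This is exactly the saturation statement of \cref{prop:lefschetz-saturation}, which I would cite directly if its hypotheses match.

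It follows that $\Vhyp^{2n-2}(X)=H^{2n-2}_{\DR}(X,\R)$. For any balanced structure $J$ on $X$ with balanced metric $\omega$, the class $[\omega^{n-1}]$ therefore lies in $\Vhyp$. Pick a closed form $\alpha$ with $p^*\alpha=d\Gamma$ and $\Gamma$ bounded, in the same class. Then $\omega^{n-1}-\alpha=d\beta$ with $\beta$ a smooth form on the compact manifold, so $p^*\omega^{n-1}=d(\Gamma+p^*\beta)$ with bounded primitive. This gives balanced hyperbolicity.

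For the final sentence, $X_0$ is K\"ahler, so small deformations $X_t$ are K\"ahler by Kodaira--Spencer stability, and hence balanced. Ehresmann identifies their underlying manifold with that of $X_0$, and the first part applies. The main obstacle is the careful bookkeeping in the Lefschetz step, namely checking that $k\leq n-2$ is exactly what makes $L^k$ surjective onto degree $2n-2$. A secondary point is making sure that $k$-hyperbolicity is defined via $\widetilde d$-boundedness of $\omega_0^k$, so that the ideal property applies with no positivity needed.
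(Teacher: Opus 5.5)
Your proposal is correct and is essentially the paper's proof. The ingredients are the same: the graded-ideal property, surjectivity of $L^k:H^{2n-2-2k}_{\DR}\to H^{2n-2}_{\DR}$ for $k\le n-2$, the transfer \cref{lem:cohomological}, and Kodaira--Spencer stability. The one difference is how you get surjectivity: you factor the hard Lefschetz isomorphism $L^{n-2}:H^2\to H^{2n-2}$ through $L^k$, where the paper uses the Poincar\'e-dual injectivity argument of \cref{prop:lefschetz-saturation}.

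You should delete the stray assertion $2n-2-2k\geq n$. It is false in general (e.g.\ $n=5$, $k=3$), and your argument never uses it.
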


Here K\"ahler $k$-hyperbolicity means that $X_0$ carries a K\"ahler form $\omega_0$ such that $\omega_0^k$ is $\widetilde d$-bounded, in the sense of \cite[Definition~2.3]{FWW26}. More generally, boundedness of an $r$-th K\"ahler power propagates to all sufficiently high powers, as follows.

\begin{theorem}\label{thm:kahler-k-upper}
Let $\pi:\mathcal X\to\Delta$ be a holomorphic family of compact complex $n$-folds. Suppose that $X_0$ is K\"ahler $r$-hyperbolic for some $1\leq r\leq n$, and let $\ell$ satisfy
\[
 r\leq \ell\leq n,\qquad 2\ell\geq n+r.
\]
Then every K\"ahler complex structure on the underlying smooth manifold is K\"ahler $\ell$-hyperbolic. In particular, every sufficiently small holomorphic deformation $X_t$ of $X_0$ is K\"ahler $\ell$-hyperbolic.
\end{theorem}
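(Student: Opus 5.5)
The plan is a topological argument on the fixed de Rham cohomology, which sees no complex structure. Let $M$ be the underlying smooth manifold of $X_0$, and let $\Vhyp^\bullet(M)\subset H^\bullet_{\DR}(M,\R)$ be the graded subspace of classes with a $\widetilde d$-bounded representative. This subspace does not depend on the metric, since any two metrics on a compact manifold pull back to quasi-isometric metrics on $\widetilde M$. We use two inputs. The first is the graded-ideal property \cite[Remark~4.5]{BKS24}: if $a\in\Vhyp^p$ and $b\in H^q$, then $a\cup b\in\Vhyp^{p+q}$. Indeed, if $p^*\alpha=d\Gamma$ with $\Gamma$ bounded and $\beta$ is closed, then $p^*(\alpha\wedge\beta)=d(\Gamma\wedge p^*\beta)$, and $p^*\beta$ is bounded because $X$ is compact. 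The second input is that, for any closed form $\gamma$, $\widetilde d$-boundedness of $\gamma$ depends only on its class $[\gamma]$. This holds because $\gamma+d\eta$ with $\eta$ smooth on $X$ pulls back to $d(\Gamma+p^*\eta)$, and $p^*\eta$ is bounded.

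Now let $J$ be any Kähler complex structure on $M$ with Kähler form $\omega$, and put $a=[\omega]$. By hypothesis, $c:=[\omega_0^r]\in\Vhyp^{2r}(M)$. By the ideal property, $c\cup H^{2\ell-2r}\subset\Vhyp^{2\ell}$. It therefore suffices to show that $a^\ell\in [\omega_0]^r\cup H^{2\ell-2r}_{\DR}(M,\R)$.

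The key step uses hard Lefschetz for $(X_0,\omega_0)$. Set $m:=n-r$. Hard Lefschetz says that $L_0^{r}\colon H^{m}\to H^{2n-m}$ is an isomorphism, where $L_0=[\omega_0]\cup$. Consequently, for every $j\ge m$, the map $L_0^{r}\colon H^{j}\to H^{j+2r}$ is surjective. To see this, write $H^{j+2r}=L_0^{j+2r-n}H^{2n-j-2r}$, which is surjective by Lefschetz since $j+2r\ge n$. Then $L_0^{j+2r-n}=L_0^{r}L_0^{j+r-n}$, with $j+r-n\ge0$ because $j\ge n-r$. Apply this with $j=2\ell-2r$. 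The condition $j\ge n-r$ is exactly $2\ell\ge n+r$, and $\ell\le n$ keeps all degrees in range. It follows that every class of degree $2\ell$ lies in $[\omega_0]^r\cup H^{2\ell-2r}\subset\Vhyp^{2\ell}$. In particular $a^\ell\in\Vhyp^{2\ell}$. By the second input, $\omega^\ell$ itself is $\widetilde d$-bounded, so $(M,J)$ is Kähler $\ell$-hyperbolic.

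For the final clause, Kähler metrics are stable under small deformations by Kodaira--Spencer, and Ehresmann identifies each $X_t$ with $M$. Applying the previous paragraph to each $X_t$ gives the claim. The main point needing care is the surjectivity bookkeeping in the Lefschetz step, namely getting the inequality $2\ell\ge n+r$ exactly right. The remaining steps are routine.
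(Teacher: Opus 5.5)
Your proposal is correct and follows the same route as the paper. The paper also combines the graded-ideal property with hard Lefschetz surjectivity of $L_a^r\colon H^{2\ell-2r}_{\DR}\to H^{2\ell}_{\DR}$ for $2\ell\geq n+r$ to get $\Vhyp^{2\ell}(X)=H^{2\ell}_{\DR}(X,\R)$, and then uses the cohomological transfer lemma and Kodaira--Spencer stability. The only cosmetic difference is that you prove surjectivity by factoring the Lefschetz isomorphism $L^{2\ell-n}$ as $L^r\circ L^{2\ell-r-n}$, whereas the paper passes by Poincar\'e duality to injectivity of $L_a^r$ on $H^{2n-2\ell}_{\DR}$.
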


Taking $\ell=n-1$ in the cohomological saturation underlying \cref{thm:kahler-k-upper} gives $\Vhyp^{2n-2}(X)=H^{2n-2}_{\DR}(X,\R)$ precisely when $r\leq n-2$; together with \cref{prop:intersection}, this yields \cref{t14}. The borderline case $r=n-1$ does not in general saturate $H^{2n-2}_{\DR}$ and therefore does not yield deformation openness of balanced hyperbolicity by this argument. The full discussion is presented in \cref{sec:topological-program}.

\paragraph{The universal-cover correction.} Starting from a bounded $(\partial+\dbar)$-potential upstairs, we reduce the existence of a bounded $d$-primitive to a single bounded top-row $\partial$-equation; see \cref{prop:top-row}. The choice of potential is essential: even when the lifted form already has a bounded $d$-primitive, a prescribed bounded potential may have a non-zero correction obstruction. In dimensions $n\geq3$, we therefore identify the full obstruction set as an affine subspace, pass to a choice-independent quotient class $\operatorname{Ob}_{\infty}$, and give both a counterexample for prescribed potentials and conditional solution criteria; see \cref{prop:affine-obstruction,prop:prescribed-potential-failure,rem:formal-green,crit:compact-residual}. The general vanishing of $\operatorname{Ob}_{\infty}$ remains open.

For reference, the main deformation criteria are summarized below.

\begin{table}[ht]
\centering
\scriptsize
\renewcommand{\arraystretch}{1.08}
\begin{tabularx}{\textwidth}{@{}>{\raggedright\arraybackslash}p{0.31\textwidth}
>{\raggedright\arraybackslash}p{0.29\textwidth}X@{}}
\toprule
Hypothesis & Conclusion & Reference \\
\midrule
No additional hypothesis & Open in general & \\
\midrule

$\dim_{\R}(V_{\mathrm K}\cap\mathcal P_t^{\mathrm K})$ is locally constant & K\"ahler hyperbolic for $|t|\ll1$ & \cref{thm:kahler-main} \\
\midrule

$X_0$ is K\"ahler $r$-hyperbolic and $r\leq \ell\leq n$, $2\ell\geq n+r$ & K\"ahler $\ell$-hyperbolic & \cref{thm:kahler-k-upper,cor:kahler-k-upper} \\
\midrule

Some $a\in\mathcal K_0\cap V_{\mathrm K}$ has maximal $Q_a$-positive index $1+2h^{2,0}(X_0)$ on $V_{\mathrm K}$ & $X_t$ is K\"ahler hyperbolic for $|t|\ll1$ & \cref{cor:kahler-polarized-positive-index} \\
\midrule
$\mathcal L\to\mathcal X$ holomorphic, $L_0$ positive, $c_1(L_0)\in V_{\mathrm K}$ & K\"ahler hyperbolic for $|t|\ll1$ & \cref{cor:kahler-linebundle} \\
\midrule
$c_0\in\mathcal K_0\cap V_{\mathrm K}$ and $0$ is non-isolated in $\operatorname{Hdg}(c_0)$ & K\"ahler hyperbolic for $|t|\ll1$ & \cref{cor:kahler-hodge-locus} \\
\midrule
$h^{2,0}(X_0)=0$ & K\"ahler hyperbolic for $|t|\ll1$ & \cref{cor:kahler-h20}; cf. \cite[Theorem~4.23]{Khe25KH} \\
\midrule
$h^{1,1}(X_0)=1$ & K\"ahler hyperbolic for $|t|\ll1$ & \cref{cor:kahler-h11-one} \\
\midrule
Surface: the restricted period matrix has real rank $2p_g$ & K\"ahler hyperbolic for $|t|\ll1$ & \cref{prop:surface-period} \\
\bottomrule
\end{tabularx}
\caption{Deformation criteria for K\"ahler hyperbolicity.}
\label{tab:statusk}
\end{table}

\begin{table}[ht]
\centering
\scriptsize
\renewcommand{\arraystretch}{1.08}
\begin{tabularx}{\textwidth}{@{}>{\raggedright\arraybackslash}p{0.31\textwidth}
>{\raggedright\arraybackslash}p{0.29\textwidth}X@{}}
\toprule
Hypothesis & Conclusion & Reference \\
\midrule
No additional hypothesis & Not deformation open in dimensions $N\geq5$ & \cref{thm:main-counterexample} \\
\midrule

$H^{n-2,n}_{\Aep}(X_0)=0$ (equiv. $h^{2,0}_{\BC}(X_0)=0$) & Balanced hyperbolic for $|t|\ll1$ & \cref{thm:aeppli-vanishing} \\
\midrule

$h^{n-2,n}_{\Aep}(X_t)$ and $\dim_{\R}(V\cap\mathcal P_t)$ are locally constant & Balanced hyperbolic for $|t|\ll1$ & \cref{thm:constant-rank} \\
\midrule

$X_0$ is a $\ddbar$-manifold and $V+\mathcal P_0=H$ & Balanced hyperbolic for $|t|\ll1$ & \cref{cor:ddbar-transverse} \\
\midrule

The smooth manifold carries $a\in H^2_{\DR}$ with $a^r\in\Vhyp^{2r}$ and $L_a^r:H^{2n-2-2r}_{\DR}\twoheadrightarrow H^{2n-2}_{\DR}$ & Every balanced complex structure is balanced hyperbolic & \cref{prop:lefschetz-saturation,cor:bounded-lefschetz-balanced} \\
\midrule

$n\geq3$, $1\leq k\leq n-2$, and $X_0$ is K\"ahler $k$-hyperbolic & Every balanced complex structure is balanced hyperbolic & \cref{t14} \\
\midrule

Nearby fibres satisfy the weak $(n-1,n)$-$\ddbar$ condition & Nearby fibres admit Gauduchon hyperbolic balanced metrics & Fu--Yau \cite[Theorem~6]{FY11}; Khelifati \cite[Theorem~2.15(1)]{Khe26} \\
\midrule

A family of $\ddbar$-manifolds & The central balanced metric extends to a smooth family of Gauduchon hyperbolic balanced metrics & Khelifati \cite[Theorem~2.17(1)]{Khe26} \\
\midrule

$\ddbar$-deformations co-polarized by $c=[\omega_0^{n-1}]_{\DR}$ & Nearby co-polarized fibres are balanced hyperbolic & Popovici \cite[Observation~7.2]{Pop19} and \cref{crit:fixed-class} \\
\midrule

$X_0$ degenerate balanced, with the weak $(n-1,n)$ and mild $(n,n-2)$ $\ddbar$ conditions nearby & Nearby fibres are degenerate balanced & Khelifati \cite[Theorem~2.15(2)]{Khe26}; see also \cite[Theorem~2.17(2)]{Khe26} \\
\midrule

Nearby fibres are assumed only to remain balanced & Open & \cref{q:balanced-locus} \\
\midrule

A balanced metric is Gauduchon hyperbolic, $n\geq3$ & Balanced hyperbolic iff $\operatorname{Ob}_{\infty}=0$; a fixed potential may be obstructed, and general vanishing is open & \cref{prop:affine-obstruction,prop:prescribed-potential-failure}; \cite[Conjecture~2.20]{Khe26} \\
\bottomrule
\end{tabularx}
\caption{Deformation criteria for balanced hyperbolicity.}
\label{tab:statusb}
\end{table}
\FloatBarrier

Unless stated otherwise, \cref{tab:statusk} assumes that $X_0$ is K\"ahler hyperbolic, while \cref{tab:statusb} assumes that $X_0$ is balanced hyperbolic; the conclusions concern the corresponding hyperbolicity of nearby fibres.

\noindent\emph{Relation to prior work.}
Several ingredients used below are independently established in the literature. The graded-ideal property of hyperbolic cohomology is recorded in \cite[Remark~4.5]{BKS24}. For K\"ahler hyperbolicity, Khelifati recently initiated a direct study of its deformation behaviour, stating nearby $L^2$-K\"ahler hyperbolicity in general and ordinary K\"ahler hyperbolicity under $h^{2,0}(X_0)=0$ \cite[Corollary~4.20 and Theorem~4.23]{Khe25KH}; we give an independent proof of the latter statement. Liu--Shen construct Hodge maps and $\nabla^{1,1}$-flat extensions controlling nearby $(1,1)$-classes and K\"ahler cones \cite{LS26}, while Bei--Diverio--Trapani introduce K\"ahler topological hyperbolicity, retaining a hyperbolic real degree-two class with non-zero top self-intersection but dropping the type and positivity requirements \cite{BDT25}. Rao--Tsai use holomorphic cohomological obstruction sections and the identity theorem, followed by the exponential sequence, to construct line bundles on a total family from persistent integral classes \cite[Proposition~4.13 and Lemma~4.14]{RT21}; related global line-bundle and projective-locus constructions appear in \cite[Proposition~3.15]{RT22} and \cite[Corollaries~4.4--4.5]{LRWW25}. In \cref{cor:kahler-hodge-locus} we use the same identity-theorem principle before the integrality step: for an arbitrary real hyperbolic class, non-isolated persistence of type forces persistence on the whole disc. Integrality is needed only for the line-bundle globalization in \cref{cor:kahler-rational-globalization}. We have not found this prescribed-class Hodge-locus formulation explicitly recorded in the cited literature.

\section{Cohomological framework}
\label{sec:common}

\subsection{\texorpdfstring{$\widetilde d$-bounded}{d-tilde-bounded} cohomology}

Let $(X,g)$ be a compact Riemannian manifold and let $p:\widetilde X\to X$ be its universal cover, equipped with the lifted metric $\widetilde g=p^{*}g$.

\begin{definition}\label{def:dtilde}
Let $k\geq1$. A closed $k$-form $\alpha$ on $X$ is called \emph{$\widetilde d$-bounded} if there exists a $(k-1)$-form $\Gamma$ on $\widetilde X$ such that
\[
p^{*}\alpha=d\Gamma\quad\text{and}\quad\|\Gamma\|_{L^{\infty}(\widetilde X,\widetilde g)}<\infty.
\]
\end{definition}

Because any two metrics on a compact manifold are uniformly equivalent, the boundedness
condition is independent of the chosen base metric. The next elementary lemma is the
central transfer principle.

\begin{lemma}\label{lem:cohomological}
Let $\alpha$ and $\beta$ be closed $k$-forms on a compact manifold $X$ with $[\alpha]_{\DR}=[\beta]_{\DR}$. Then $\alpha$ is $\widetilde d$-bounded if and only if
$\beta$ is $\widetilde d$-bounded.
\end{lemma}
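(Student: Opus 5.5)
By symmetry it suffices to show that if $\alpha$ is $\widetilde d$-bounded then so is $\beta$. Since $[\alpha]_{\DR}=[\beta]_{\DR}$, there is a smooth $(k-1)$-form $\eta$ on $X$ with $\beta=\alpha+d\eta$. Choose $\Gamma$ on $\widetilde X$ with $p^{*}\alpha=d\Gamma$ and $\|\Gamma\|_{L^\infty(\widetilde X,\widetilde g)}<\infty$. The candidate primitive for $p^{*}\beta$ is
\[
\Gamma':=\Gamma+p^{*}\eta .
\]

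Because pullback commutes with $d$, we have $d\Gamma'=p^{*}\alpha+p^{*}d\eta=p^{*}(\alpha+d\eta)=p^{*}\beta$. It remains to bound $\Gamma'$. Since $p$ is a local isometry for $\widetilde g=p^{*}g$, the pointwise norm satisfies $|p^{*}\eta|_{\widetilde g}(x)=|\eta|_g(p(x))$ for every $x\in\widetilde X$. Hence
\[
\|p^{*}\eta\|_{L^\infty(\widetilde X,\widetilde g)}=\|\eta\|_{L^\infty(X,g)}<\infty,
\]
the right-hand side being finite because $\eta$ is continuous on the compact manifold $X$. By the triangle inequality, $\|\Gamma'\|_{L^\infty}\le\|\Gamma\|_{L^\infty}+\|\eta\|_{L^\infty}<\infty$, so $\beta$ is $\widetilde d$-bounded.

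The argument contains no real obstacle. The only point needing care is that the lifted form $p^{*}\eta$ is uniformly bounded on the noncompact cover. This is exactly where compactness of $X$ and the choice of the pulled-back metric enter, together with the remark after \cref{def:dtilde} that the boundedness condition does not depend on the base metric.
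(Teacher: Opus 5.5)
Your proof is correct and follows the paper's argument exactly: write $\beta=\alpha+d\eta$, take $\Gamma+p^{*}\eta$ as the primitive of $p^{*}\beta$, bound $p^{*}\eta$ using compactness of $X$ and the fact that $p$ is a local isometry for the lifted metric, and get the converse by symmetry. The appeal to metric-independence in your last paragraph is harmless but not needed, since you work with the lifted metric throughout.
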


\begin{proof}
Write $\beta-\alpha=d\eta$ for a smooth $(k-1)$-form $\eta$ on $X$. If $p^{*}\alpha=d\Gamma$ with $\Gamma$ bounded, then
\[
p^{*}\beta=p^{*}\alpha+d(p^{*}\eta)=d\bigl(\Gamma+p^{*}\eta\bigr).
\]
The form $\eta$ has bounded pointwise norm on the compact base, and $p$ is a local
isometry for lifted metrics; hence $p^{*}\eta$ is bounded on $\widetilde X$. Thus
$\Gamma+p^{*}\eta$ is a bounded primitive of $p^{*}\beta$. Interchanging $\alpha$ and
$\beta$ proves the converse.
\end{proof}

It follows that
\[
 \Vhyp^{k}(X):=\bigl\{c\in H^{k}_{\DR}(X,\R):c\text{ has a $\widetilde d$-bounded representative}\bigr\}
\]
is a well-defined real vector subspace of $H^{k}_{\DR}(X,\R)$. We set $\Vhyp^0(X):=0$.

The following ideal property is known for hyperbolic cohomology classes; see \cite[Remark~4.5]{BKS24}. We include the short differential-form proof for completeness and to fix the notation used later.

\begin{proposition}\label{prop:graded-ideal}
The graded subspace
\[
\Vhyp^{*}(X):=\bigoplus_{k\geq0}\Vhyp^{k}(X)\subset H^{*}_{\DR}(X,\R)
\]
is an ideal of the de Rham cohomology ring. More precisely, for all $r,s$, the cup product satisfies
\[
\Vhyp^{r}(X)\smile H^{s}_{\DR}(X,\R)\subseteq \Vhyp^{r+s}(X).
\]
\end{proposition}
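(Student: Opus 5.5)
Let $c\in\Vhyp^{r}(X)$ and $e\in H^{s}_{\DR}(X,\R)$; we show $c\smile e\in\Vhyp^{r+s}(X)$. Graded commutativity, $c\smile e=\pm e\smile c$, gives the two-sided statement, so the ideal claim follows. By the definition of $\Vhyp^r(X)$ and \cref{lem:cohomological}, choose a closed $r$-form $\alpha$ representing $c$ with $p^*\alpha=d\Gamma$ and $\|\Gamma\|_{L^\infty(\widetilde X,\widetilde g)}<\infty$. Let $\beta$ be any smooth closed $s$-form on $X$ representing $e$. Then $\alpha\wedge\beta$ is a closed $(r+s)$-form representing $c\smile e$. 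By \cref{lem:cohomological} it suffices to show that $\alpha\wedge\beta$ itself is $\widetilde d$-bounded.

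On the universal cover, since $\beta$ is closed, $d(p^*\beta)=p^*d\beta=0$. The Leibniz rule gives
\[
d\bigl(\Gamma\wedge p^*\beta\bigr)=d\Gamma\wedge p^*\beta+(-1)^{r-1}\Gamma\wedge d(p^*\beta)=p^*\alpha\wedge p^*\beta=p^*(\alpha\wedge\beta).
\]
So $\Gamma\wedge p^*\beta$ is a primitive of $p^*(\alpha\wedge\beta)$.

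It remains to check that this primitive is bounded. At every point, the pointwise norm satisfies $|\Gamma\wedge p^*\beta|\leq C_{r,s,n}\,|\Gamma|\,|p^*\beta|$, with a constant depending only on the degrees and the real dimension. Since $p$ is a local isometry for $\widetilde g=p^*g$, we have $|p^*\beta|_{\widetilde g}(x)=|\beta|_g(p(x))\leq\sup_X|\beta|_g<\infty$ by compactness of $X$. Therefore $\|\Gamma\wedge p^*\beta\|_{L^\infty}\leq C\|\Gamma\|_{L^\infty}\sup_X|\beta|<\infty$.

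The edge cases need a word. When $s=0$, $e$ is a constant and the claim is trivial. When $r=0$, $\Vhyp^0=0$ by convention. When $r+s>\dim X$, the product vanishes. None of these affects the argument. The only step needing care is the pointwise wedge estimate and the transfer of the sup-norm bound through the local isometry; both are routine, so I expect no real obstacle.
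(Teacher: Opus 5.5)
Your proposal is correct and follows the paper's proof: take a bounded primitive $\Gamma$ of $p^*\alpha$, note that $\Gamma\wedge p^*\beta$ is a primitive of $p^*(\alpha\wedge\beta)$ because $p^*\beta$ is closed, and observe that it is bounded because $p^*\beta$ is lifted from the compact base. Your added remarks on graded commutativity (for the two-sided ideal), the explicit Leibniz sign, the pointwise wedge estimate, and the edge cases are harmless elaborations of the same argument.
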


\begin{proof}
Let $[\alpha]\in\Vhyp^{r}(X)$ and $[\beta]\in H^{s}_{\DR}(X,\R)$, with $d\alpha=d\beta=0$. Choose a bounded form $\Gamma$ on $\widetilde X$ such that $p^{*}\alpha=d\Gamma$. Since $p^{*}\beta$ is closed,
\[
p^{*}(\alpha\wedge\beta)=d\bigl(\Gamma\wedge p^{*}\beta\bigr).
\]
The form $p^{*}\beta$ is bounded because it is lifted from the compact base. Hence $\Gamma\wedge p^{*}\beta$ is bounded, proving that $[\alpha]\smile[\beta]\in\Vhyp^{r+s}(X)$.
\end{proof}

\subsection{Balanced cones and the fixed hyperbolic subspace}

A real $(n-1,n-1)$-form $\Omega$ is called strictly positive if it lies in the interior of the cone of strongly positive forms. Michelsohn proved that the map
\[
 \omega\longmapsto\omega^{n-1}
\]
is a bijection from positive $(1,1)$-forms to strictly positive $(n-1,n-1)$-forms \cite{Mic82}. Consequently, a compact complex $n$-fold is balanced if and only if it carries a strictly positive, $d$-closed $(n-1,n-1)$-form. By a harmless abuse of terminology, we also refer to such an $(n-1,n-1)$-form as a balanced metric; the same convention is used in the hyperbolic setting.

After applying Ehresmann's fibration theorem to a proper holomorphic submersion and
shrinking the base, we may identify all nearby fibres with a fixed smooth manifold $X$
and write $J_t$ for their complex structures. The universal cover $p:\widetilde X\to X$ is then fixed as a smooth covering.

For each $t$, set
\[
\cB_t:=\left\{[\Omega]_{\DR}\in H^{2n-2}_{\DR}(X,\R):\begin{array}{l}
 d\Omega=0,\quad \Omega\in A^{n-1,n-1}_{J_t}(X,\R),\\
 \Omega\text{ is strictly positive}
 \end{array}
 \right\}.
\]
This is the image in de Rham cohomology of the balanced cone of $X_t$.

\begin{proposition}\label{prop:intersection}
For every $t$,
\[
X_t\text{ is balanced}\quad\Longleftrightarrow\quad\cB_t\neq\varnothing,
\]
and
\[
X_t\text{ is balanced hyperbolic}\quad\Longleftrightarrow\quad\cB_t\cap\Vhyp^{2n-2}(X)\neq\varnothing.
\]
\end{proposition}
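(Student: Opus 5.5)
The plan is to unwind the definitions, using Michelsohn's correspondence for the first equivalence and \cref{lem:cohomological} for the second.

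For the first equivalence, recall from the preceding discussion that, by Michelsohn's bijection $\omega\mapsto\omega^{n-1}$ between positive $(1,1)$-forms and strictly positive $(n-1,n-1)$-forms, $X_t$ is balanced if and only if it carries a strictly positive, $d$-closed, real $(n-1,n-1)$-form with respect to $J_t$. If $\omega_t$ is a balanced metric on $X_t$, then $\Omega:=\omega_t^{n-1}$ is such a form, so $[\Omega]_{\DR}\in\cB_t$. Conversely, an element of $\cB_t$ is by definition the class of such an $\Omega$. Its Michelsohn root $\omega$ is a positive $(1,1)$-form with $d\omega^{n-1}=0$, so it is a balanced metric.

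For the second equivalence, suppose first that $X_t$ is balanced hyperbolic with balanced metric $\omega_t$, so that $\Omega=\omega_t^{n-1}$ is $\widetilde d$-bounded. Since $\Omega$ is $\widetilde d$-bounded, its class lies in $\Vhyp^{2n-2}(X)$ by the definition of that space, and it lies in $\cB_t$ by the first part. Conversely, suppose $c\in\cB_t\cap\Vhyp^{2n-2}(X)$. Choose a strictly positive, $d$-closed, real $(n-1,n-1)$-form $\Omega$ representing $c$, and a $\widetilde d$-bounded representative $\beta$ of $c$. Since $[\Omega]_{\DR}=[\beta]_{\DR}$, \cref{lem:cohomological} shows that $\Omega$ is itself $\widetilde d$-bounded. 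Its Michelsohn root $\omega$ then satisfies $\omega^{n-1}=\Omega$ and is a balanced metric, so $X_t$ is balanced hyperbolic.

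The only point needing care is the independence of boundedness from the choice of metric on $X$. All fibres share the same smooth manifold and the same universal cover, and any two metrics on the compact manifold $X$ are uniformly equivalent, as remarked after \cref{def:dtilde}. Hence $\Vhyp^{2n-2}(X)$ does not depend on $t$ or on the Hermitian metric used. Apart from this, I expect no real obstacle: the statement is a direct repackaging of the definitions using \cref{lem:cohomological}.
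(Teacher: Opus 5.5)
Your proof is correct and follows the paper's own argument. Michelsohn's root theorem gives the first equivalence. \cref{lem:cohomological} then reduces $\widetilde d$-boundedness of $\omega^{n-1}$ to membership of its de Rham class in $\Vhyp^{2n-2}(X)$, which gives the second; you simply spell out both directions in more detail than the paper does.
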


\begin{proof}
The first equivalence is Michelsohn's root theorem. For the second, if $\Omega=\omega^{n-1}$ is positive and closed, then $\omega$ is balanced hyperbolic precisely when $[\Omega]_{\DR}$ belongs to $\Vhyp^{2n-2}(X)$. By \cref{lem:cohomological}, this depends only on the de Rham class.
\end{proof}

Thus, merely knowing that the nearby fibres are balanced gives $\cB_t\neq\varnothing$; it does not force an intersection with the fixed linear subspace $\Vhyp^{2n-2}(X)$.

\begin{criterion}\label{crit:fixed-class}
Let $X_t=(X,J_t)$ be a small deformation and let $c\in\Vhyp^{2n-2}(X)$. If, for every sufficiently small $t$, the class $c$ contains a strictly positive, $d$-closed $(n-1,n-1)$-form $\Omega_t$ for $J_t$, then $X_t$ is balanced hyperbolic.
\end{criterion}

\begin{proof}
By Michelsohn's theorem, write $\Omega_t=\omega_t^{n-1}$ for a positive Hermitian form
$\omega_t$. Since $d\Omega_t=0$, the metric is balanced. Since $[\Omega_t]_{\DR}=c\in\Vhyp^{2n-2}(X)$, \cref{lem:cohomological} gives a bounded primitive of $p^{*}\Omega_t$. Hence $\omega_t$ is balanced hyperbolic.
\end{proof}

Thus the analytic work may be carried out on the compact fibres: once the de Rham class is fixed, boundedness of a primitive upstairs is a cohomological property.

\section{A counterexample to unrestricted deformation openness}
\label{sec:counterexample}

\subsection{The Iwasawa deformation}

Let $I$ be the Iwasawa threefold. It has global $(1,0)$-forms $\varphi^1,\varphi^2,\varphi^3$ satisfying
\[
d\varphi^1=d\varphi^2=0\qquad\textrm{and}\qquad d\varphi^3=-\varphi^1\wedge\varphi^2.
\]
The invariant Hermitian form
\[
\alpha=\frac{i}{2}\sum_{j=1}^{3}\varphi^j\wedge\overline{\varphi^j}
\]
is balanced. Indeed, writing $e_j=\frac{i}{2}\varphi^j\wedge\overline{\varphi^j}$, one has $\alpha^2=2(e_1e_2+e_1e_3+e_2e_3)$. The term $e_1e_2$ is closed. Whenever $d$ acts on
$\varphi^3$ or $\overline{\varphi^3}$ in $e_1e_3$ or $e_2e_3$, a repeated $\varphi^1,\varphi^2,\overline{\varphi^1}$, or $\overline{\varphi^2}$ occurs; hence all such terms vanish and $d\alpha^2=0$.

The following classical input is due to Alessandrini--Bassanelli, using Nakamura's
small deformations of the Iwasawa manifold \cite{AB90}.

\begin{theorem}\label{thm:AB}
There exists a one-parameter analytic family $\{I_t\}_{t\in\Delta}$ of compact complex
threefolds such that $I_0=I$ and $I_t$ is not balanced for every sufficiently small
$t\neq 0$.
\end{theorem}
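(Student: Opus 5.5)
The plan is to use Nakamura's explicit small deformations of the Iwasawa manifold, as in \cite{AB90}, and to rule out balanced metrics on the deformed fibres by a duality argument. Write $I=\Gamma\backslash G$, where $G$ is the complex Heisenberg group. Nakamura's Kuranishi family is parametrized by $t=(t_{ij})\in\C^6$. I would take a one-parameter slice outside class (i), for instance $t_{11}=t$ with all other parameters zero. For this slice one can write down global left-invariant $(1,0)$-forms $\varphi^1_t,\varphi^2_t,\varphi^3_t$ on $I_t$, obtained by perturbing $\varphi^1,\varphi^2$ by multiples of $\overline{\varphi^1},\overline{\varphi^2}$ and correcting $\varphi^3$.

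\textbf{Step 1: structure equations.} First I compute the deformed structure equations. They should have the form
\[
d\varphi^1_t=d\varphi^2_t=0,\qquad d\varphi^3_t=\sigma_{12}\,\varphi^1_t\wedge\varphi^2_t+\sum_{i,j}\sigma_{i\bar j}\,\varphi^i_t\wedge\overline{\varphi^j_t},
\]
where the mixed coefficients satisfy $\sigma_{i\bar j}=O(t)$. The key point to verify is that the Hermitian matrix $(\sigma_{i\bar j})$, or the appropriate part of it, is nonzero and semidefinite of a fixed sign, e.g.\ a nonzero multiple of $|t|^2$ times a positive rank-one matrix, for all small $t\neq0$. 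This is a routine but careful calculation.

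\textbf{Step 2: reduction to invariant forms.} Next I reduce to left-invariant forms by symmetrization. If $I_t$ carried a balanced metric $\omega$, then averaging $\omega^{2}$ over the compact quotient with respect to the bi-invariant Haar measure (using that $G$ is nilpotent, hence unimodular) produces a left-invariant, $d$-closed, strictly positive $(2,2)$-form. Michelsohn's root theorem then turns this into an invariant balanced metric. Note that averaging preserves $J_t$-type because $J_t$ is left-invariant.

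\textbf{Step 3: contradiction by duality.} Then I derive the contradiction on the finite-dimensional space of invariant forms. Let $\Omega$ be a strictly positive invariant closed $(2,2)$-form, and let $\eta=\varphi^3_t+\overline{\varphi^3_t}$ (or the suitable real part). By Stokes' theorem,
\[
0=\int_{I_t}d\eta\wedge\Omega=\int_{I_t}(d\eta)^{1,1}\wedge\Omega .
\]
By Step 1, $(d\eta)^{1,1}=2\operatorname{Re}\sum\sigma_{i\bar j}\varphi^i_t\wedge\overline{\varphi^j_t}$, which is, up to a constant, a nonzero semipositive (or seminegative) $(1,1)$-form. Its wedge product with a strictly positive $(2,2)$-form is therefore a nonzero volume form of fixed sign, so the integral cannot vanish. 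This is exactly Michelsohn's criterion that a nonzero positive $(1,1)$-component of a boundary obstructs balancedness. Analyticity of the slice in $t$ gives the required family $\{I_t\}_{t\in\Delta}$.

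\textbf{Main obstacle.} I expect the main difficulty to be Step 1: choosing the deformation direction so that the $(1,1)$-part of $d\varphi^3_t$ is genuinely definite (semidefinite and nonzero), rather than indefinite. If $t_{11}$ alone yields an indefinite form, I would instead try the combination $t_{11}=t_{22}=t$, or other Nakamura class (ii) or (iii) parameters, where the resulting mixed coefficient matrix is a scalar multiple of $|t|^2\,\mathrm{Id}$. I would also try pairing $d\eta$ with a different invariant $1$-form, for example $i(\varphi^3_t-\overline{\varphi^3_t})$, to obtain the correct sign.
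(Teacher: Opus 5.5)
The paper does not prove this statement; it imports it from Alessandrini--Bassanelli. Your overall mechanism is the right kind of argument. Step~3 is Michelsohn's obstruction, and it works against an arbitrary closed strictly positive $(2,2)$-form, so the averaging in Step~2 is not even needed.

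The genuine gap is Step~1, which carries all the content, and it fails on both slices you actually name. Since $d\varphi^1_t=d\varphi^2_t=0$, every real invariant $1$-form is, up to a closed form, $\eta_a=a\varphi^3_t+\bar a\,\overline{\varphi^3_t}$. Its $(1,1)$-part is $(d\eta_a)^{1,1}=i\sum h_{i\bar j}\varphi^i_t\wedge\overline{\varphi^j_t}$ with $h=-i(aS-\bar aS^*)$ and $S=(\sigma_{i\bar j})$. Note that $S$ itself is not Hermitian; what matters is $h$, whose trace is $2\operatorname{Im}(a\operatorname{tr}S)$.

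In Nakamura's convention ($t_{i\lambda}$ the coefficient of $\theta_i\otimes\overline{\varphi^\lambda}$), $t_{11}=t$ means $\varphi^1_t=\varphi^1+t\,\overline{\varphi^1}$, $\varphi^2_t=\varphi^2$, $\varphi^3_t=\varphi^3$. Then exactly
\[
d\varphi^3_t=-\frac{1}{1-|t|^2}\Bigl(\varphi^1_t\wedge\varphi^2_t+t\,\varphi^2_t\wedge\overline{\varphi^1_t}\Bigr),
\]
so $S$ is purely off-diagonal. For your fallback $t_{11}=t_{22}=t$, with $\varphi^3_t=\varphi^3-t^2\overline{\varphi^3}$, one finds $\sigma_{1\bar1}=\sigma_{2\bar2}=0$ and $\sigma_{1\bar2}=-\sigma_{2\bar1}=t/(1-|t|^2)$. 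This is linear in $t$ and nothing like $|t|^2\mathrm{Id}$.

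In both cases $\operatorname{tr}S=0$, so every $h$ is trace-free, hence indefinite or zero. No choice of $\eta$, including $i(\varphi^3_t-\overline{\varphi^3_t})$, can work. This is not a technical defect. The condition $\sigma_{1\bar1}+\sigma_{2\bar2}=0$ is exactly what makes $\omega_t=\frac{i}{2}\sum_j\varphi^j_t\wedge\overline{\varphi^j_t}$ satisfy $d\omega_t^2=0$. So every fibre of both of your slices is balanced, and the statement you set out to prove is false there.

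The missing idea is to deform in a direction that creates a diagonal entry of $S$. For example, take $t_{21}=t$ alone: $\varphi^1_t=\varphi^1$, $\varphi^2_t=\varphi^2+t\,\overline{\varphi^1}$, $\varphi^3_t=\varphi^3$. No second-order correction is needed, since $t_{11}t_{22}-t_{12}t_{21}=0$. Then exactly $d\varphi^3_t=-\varphi^1_t\wedge\varphi^2_t+t\,\varphi^1_t\wedge\overline{\varphi^1_t}$.

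Take the real form $\eta_t:=i\bigl(\bar t\varphi^3_t-t\,\overline{\varphi^3_t}\bigr)$. It satisfies $(d\eta_t)^{1,1}=2|t|^2\,i\varphi^1_t\wedge\overline{\varphi^1_t}$, which is nonzero and semipositive. This is precisely the ``$|t|^2$ times a positive rank-one form'' you were hoping for. But it comes from $t_{21}$, not $t_{11}$, and it needs the $t$-dependent phase $a=i\bar t$. With your fixed $\eta=\varphi^3_t+\overline{\varphi^3_t}$, the $(1,1)$-part would vanish for real $t$.

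Your Step~3 then gives, for any balanced $\omega$ on $I_t$ with $t\neq0$,
\[
0=\int_{I_t}d\eta_t\wedge\omega^2=2|t|^2\int_{I_t}i\varphi^1_t\wedge\overline{\varphi^1_t}\wedge\omega^2>0,
\]
a contradiction. Alternatively, $t_{21}=-t_{12}=t$ gives $S=t\,\mathrm{Id}+O(|t|^2)$ and a definite form.
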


Fu--Yau explicitly recall this family as the standard counterexample to deformation
openness of balanced metrics \cite{FY11}. We shall not need the detailed structure
equations of $I_t$; only \cref{thm:AB} is used.

\subsection{A K\"ahler hyperbolic stabilizing factor}

\begin{lemma}\label{lem:curves-KH}
Let $C$ be a compact Riemann surface of genus at least two. Then it is K\"ahler hyperbolic. Consequently, any product $Y=C_1\times\cdots\times C_m$ of such curves is K\"ahler hyperbolic.
\end{lemma}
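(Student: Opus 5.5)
We first show that a single curve $C$ of genus $g\geq2$ is K\"ahler hyperbolic, and then pass to products using the graded-ideal property. For $C$, uniformization identifies the universal cover $\widetilde C$ with the unit disc $\D$, with $\pi_1(C)$ acting by isometries of the Poincar\'e metric. Let $\omega$ be the K\"ahler form on $C$ induced by the complete hyperbolic metric of curvature $-1$. Its lift to $\D$ is
\[
p^{*}\omega=\frac{2i\,dz\wedge d\bar z}{(1-|z|^2)^2}=d\eta,\qquad \eta=\frac{i(\bar z\,dz-z\,d\bar z)}{1-|z|^2}
\]
(up to a normalizing constant). Here $d\eta=\tfrac{2i\,dz\wedge d\bar z}{(1-|z|^2)^2}$ follows from a direct computation.

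The key estimate is that $\eta$ is bounded in the hyperbolic norm. With respect to $p^{*}\omega$, a $1$-form has pointwise norm equal to its Euclidean norm multiplied by a factor of order $(1-|z|^2)$. Hence
\[
|\eta|_{p^{*}\omega}\lesssim \frac{|z|}{1-|z|^2}\,(1-|z|^2)=|z|\leq1.
\]
Thus $\eta$ is a bounded primitive, and $\omega$ is K\"ahler hyperbolic in the sense of \cref{def:dtilde}. Since that definition is independent of the base metric, we may freely use $\omega$ itself as the reference metric. Alternatively, one can simply cite Gromov \cite{Gro91}, who observed that complete negatively curved simply connected manifolds have $\widetilde d$-bounded K\"ahler forms.

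Now take $Y=C_1\times\cdots\times C_m$ with projections $\mathrm{pr}_j$ and hyperbolic forms $\omega_j$. Then $\omega_Y=\sum_j \mathrm{pr}_j^{*}\omega_j$ is a K\"ahler form on $Y$. The universal cover is $\widetilde Y=\D^m$, and $p_Y^{*}\mathrm{pr}_j^{*}\omega_j=d(\widetilde{\mathrm{pr}}_j^{*}\eta_j)$. Each pulled-back primitive is bounded for the product metric, because the projection $\widetilde{\mathrm{pr}}_j$ is norm non-increasing on $1$-forms. Summing these primitives gives a bounded primitive of $p_Y^{*}\omega_Y$. Equivalently, each class $\mathrm{pr}_j^{*}[\omega_j]$ lies in $\Vhyp^2(Y)$, and $\Vhyp^2(Y)$ is a linear subspace. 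By \cref{lem:cohomological}, the class $[\omega_Y]$ is then hyperbolic for any representative.

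The main potential obstacle is the pointwise norm computation on $\D$, in particular keeping track of conventions and constants. The boundedness of $\eta$ is insensitive to these constants, however, so this step is routine.
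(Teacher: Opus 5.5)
Your proposal is correct and follows essentially the same route as the paper: you write down an explicit primitive of the Poincar\'e form on $\D$, bound it using the factor $1-|z|^2$ in the hyperbolic norm of covectors, and then sum the pulled-back primitives on $\D^m$. One small slip is that a direct computation gives $d\eta=-2i\,dz\wedge d\bar z/(1-|z|^2)^2$, so the primitive should be $-\eta=i(z\,d\bar z-\bar z\,dz)/(1-|z|^2)$ (twice the paper's $\theta$), which is equally bounded; also, your product step uses only pullback and linearity of $\Vhyp^2$, not the graded-ideal property you invoke at the start.
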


\begin{proof}
The universal cover of $C$ is the unit disc $\D$. Up to an inessential positive
normalization, its Poincar\'e form is
\[
 \omega_{\D}=\frac{i\,dz\wedge d\bar z}{(1-|z|^2)^2}.
\]
Set
\[
\theta=\frac{i}{2}\,\frac{z\,d\bar z-\bar z\,dz}{1-|z|^2}.
\]
A direct differentiation gives $d\theta=\omega_{\D}$. The coefficients of $\theta$
grow like $(1-|z|^2)^{-1}$, while the dual Poincar\'e metric contracts covectors by a
factor comparable to $1-|z|^2$; hence $|\theta|_{\omega_{\D}}$ is bounded (in fact it
is bounded by a universal constant). Since $\omega_{\D}$ is invariant under the deck group, it descends to a K\"{a}hler hyperbolic form on $C$.

For a product, let $\kappa_j$ be the hyperbolic form on $C_j$ and let $\theta_j$ be the
bounded primitive of its lift. Then
\[
\kappa=\sum_{j=1}^{m}\operatorname{pr}_j^{*}\kappa_j\qquad\textrm{and}\qquad\theta=\sum_{j=1}^{m}\operatorname{pr}_j^{*}\theta_j
\]
satisfy $\widetilde\kappa=d\theta$, and $\theta$ is bounded for the lifted product
metric.
\end{proof}

The following proposition gives a useful stabilization principle. A related product theorem
appears in \cite[Proposition~2.17]{MP23}; we give the explicit primitive needed here.

\begin{proposition}\label{prop:stabilization}
Let $(X^n,\alpha)$ be a compact balanced manifold and let $(Y^m,\kappa)$ be a compact
K\"ahler hyperbolic manifold with $m\geq2$. Then $X\times Y$ is balanced hyperbolic.
More precisely, $\omega=\operatorname{pr}_X^{*}\alpha+\operatorname{pr}_Y^{*}\kappa$ is a balanced hyperbolic metric.
\end{proposition}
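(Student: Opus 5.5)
Write $N=n+m$, $a=\operatorname{pr}_X^{*}\alpha$, $b=\operatorname{pr}_Y^{*}\kappa$. I will expand $\omega^{N-1}$ by the binomial theorem and show two things: it is $d$-closed, and its lift to $\widetilde X\times\widetilde Y$ has an explicit bounded primitive.

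Since $a$ and $b$ are commuting even forms, $a^j=0$ for $j>n$ and $b^k=0$ for $k>m$. Hence
\[
\omega^{N-1}=\binom{N-1}{n-1}a^{n-1}b^{m}+\binom{N-1}{n}a^{n}b^{m-1}.
\]
\textbf{Closedness.} The form $b$ is closed because $\kappa$ is Kähler, and $a^{n-1}$ is closed because $\alpha$ is balanced. The form $a^n$ is a top-degree form pulled back from $X$, so it is automatically closed. Both terms are therefore closed, and $\omega$ is balanced.

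\textbf{Bounded primitive.} The universal cover of $X\times Y$ is $\widetilde X\times\widetilde Y$ with the product of the lifted metrics. Let $\theta$ be a bounded primitive of the lift $\widetilde\kappa$ on $\widetilde Y$, and write $\Theta=\operatorname{pr}^{*}\theta$ on the product cover. Every factor other than $b$ is a lift from the compact base, hence bounded and closed. So I will set
\[
\Gamma=\binom{N-1}{n-1}\widetilde a^{\,n-1}\wedge\Theta\wedge\widetilde b^{\,m-1}+\binom{N-1}{n}\widetilde a^{\,n}\wedge\Theta\wedge\widetilde b^{\,m-2}.
\]
Using $d\widetilde a^{\,n-1}=d\widetilde a^{\,n}=d\widetilde b=0$ and $d\Theta=\widetilde b$, the Leibniz rule gives $d\Gamma=p^{*}\omega^{N-1}$. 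The second term uses $\widetilde b^{\,m-2}$, which requires $m\geq2$. The form $\Gamma$ is bounded, since it is a wedge product of bounded forms for the product metric, and pointwise norms of wedge products are controlled by products of norms. This is the same mechanism as in \cref{prop:graded-ideal}.

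\textbf{Where the obstacle lies.} The argument is a direct computation, so the only real issue is the second term $a^{n}b^{m-1}$. Its $X$-part $a^n$ is not known to be $\widetilde d$-bounded, so the primitive must come from the $Y$-side. That requires one factor of $b$ to be left over after removing the one replaced by $\Theta$, which is exactly where $m\geq2$ enters.

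Equivalently, the statement can be phrased cohomologically. By \cref{prop:graded-ideal}, $[\kappa]\in\Vhyp^2(Y)$ pulls back to a class in $\Vhyp^{2}(X\times Y)$, since pullback along the product cover preserves bounded primitives. Both classes $[a^{n-1}b^{m}]$ and $[a^{n}b^{m-1}]$ are multiples of it whenever $m\ge1$ and $m-1\ge1$ respectively. \cref{prop:intersection} then gives balanced hyperbolicity.
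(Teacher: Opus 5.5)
Your proposal is correct and follows essentially the same route as the paper: the same two-term binomial expansion of $\omega^{N-1}$, the same closedness check, and the same bounded primitive $\Gamma$, obtained by replacing one factor of $\widetilde\kappa$ with $\theta$ in each term (the second term is where $m\geq2$ is needed). Your closing cohomological rephrasing via \cref{prop:graded-ideal} is a valid repackaging of the same computation rather than a different argument.
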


\begin{proof}
Suppress pull-back symbols and put $N=n+m$. Since powers above the dimensions of the
factors vanish, only two terms survive in the binomial expansion:
\begin{equation}\label{eq:product-power}
\omega^{N-1}=\binom{N-1}{n-1}\alpha^{n-1}\wedge\kappa^{m}+\binom{N-1}{n}\alpha^{n}\wedge\kappa^{m-1}.
\end{equation}
The first term is closed because $d\alpha^{n-1}=0$ and $d\kappa=0$; the second is closed because $\alpha^n$ has top degree on $X$ and $d\kappa=0$. Thus $\omega$ is balanced.

On the universal cover, choose a bounded one-form $\theta$ with $\widetilde\kappa=d\theta$. Define
\begin{align*}
\Gamma={}&\binom{N-1}{n-1}\widetilde\alpha^{\,n-1}\wedge\theta\wedge\widetilde\kappa^{\,m-1}+\binom{N-1}{n}\widetilde\alpha^{\,n}\wedge\theta\wedge\widetilde\kappa^{\,m-2}.
\end{align*}
Since $d\widetilde\alpha^{n-1}=d\widetilde\alpha^n=d\widetilde\kappa=0$, differentiation
and \eqref{eq:product-power} give
\[
 d\Gamma=\widetilde\omega^{N-1}.
\]
All lifted forms coming from the compact factors have uniformly bounded norm, and $\theta$ is bounded; hence $\Gamma$ is bounded. Therefore $\omega$ is balanced hyperbolic.
\end{proof}

\subsection{Descent of balancedness}

\begin{lemma}\label{lem:factor-balanced}
Let $X^n$ and $Y^m$ be compact complex manifolds. If $X\times Y$ is balanced, then
$X$ is balanced. The same conclusion holds with the two factors interchanged.
\end{lemma}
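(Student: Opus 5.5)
We plan to use the standard fibre-integration (push-forward) argument, which works in both directions by symmetry. Suppose $\Omega$ is a strictly positive, $d$-closed real $(N-1,N-1)$-form on $X\times Y$, where $N=n+m$; by Michelsohn's theorem this is equivalent to $X\times Y$ being balanced. Let $\pi=\operatorname{pr}_X:X\times Y\to X$. Since $\pi$ is a proper holomorphic submersion with compact fibres $\{x\}\times Y$ of complex dimension $m$, fibre integration
\[
\Omega_X:=\pi_{*}\Omega=\int_{Y}\Omega
\]
is well defined and yields a smooth real form on $X$ of bidegree $(N-1-m,N-1-m)=(n-1,n-1)$.

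The proof then has three steps. First, $\pi_*$ commutes with $d$ because the fibres are closed manifolds; hence $d\Omega_X=\pi_*(d\Omega)=0$. Second, $\pi_*$ preserves bidegree shifted by $(m,m)$ because $\pi$ is holomorphic. Concretely, in local product coordinates only the components of $\Omega$ containing the full fibre volume $dy\wedge d\bar y$ of type $(m,m)$ survive integration.

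The third step is strict positivity, and we expect it to be the only point requiring care. We would argue via duality. A real $(n-1,n-1)$-form on $X$ is strictly positive exactly when its pairing with every nonzero weakly positive $(1,1)$-form (equivalently, with $i\,\xi\wedge\bar\xi$ for every nonzero $(1,0)$-covector $\xi$) is positive. For $\xi\in\Lambda^{1,0}T^*_xX$, we compute
\[
\Omega_X\wedge i\xi\wedge\bar\xi=\int_{\{x\}\times Y}\Omega\wedge\pi^{*}(i\xi\wedge\bar\xi).
\]
Here $\pi^*(i\xi\wedge\bar\xi)$ is a nonzero semipositive $(1,1)$-form at each point of the fibre. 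Since $\Omega$ is strictly positive, the integrand is a positive multiple of the volume form in the top-degree sense, so $\Omega_X\wedge i\xi\wedge\bar\xi>0$. Equivalently, writing $\Omega=\omega^{N-1}$ pointwise, this reduces to the positivity of $\omega^{N-1}\wedge i\eta\wedge\bar\eta$ for every nonzero $(1,0)$-form $\eta$, which is standard.

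Therefore $\Omega_X$ is a strictly positive, $d$-closed $(n-1,n-1)$-form, and Michelsohn's theorem gives a balanced metric on $X$. Interchanging the roles of $X$ and $Y$, and pushing forward along $\operatorname{pr}_Y$, shows that $Y$ is balanced as well.
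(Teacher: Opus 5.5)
Your proposal is correct and follows essentially the same route as the paper: push the strictly positive $d$-closed $(N-1,N-1)$-form forward along $\operatorname{pr}_X$, use that fibre integration commutes with $d$, check strict positivity by pairing with $i\lambda\wedge\bar\lambda$ through the projection formula, and then apply Michelsohn's root theorem. The interchanged case is handled by the same argument along $\operatorname{pr}_Y$, exactly as you indicate.
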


\begin{proof}
Let $q:X\times Y\to X$ be the projection and let $h$ be a balanced metric on the product. Set
\[
 \Xi=h^{n+m-1}.
\]
Then $\Xi$ is a strictly positive, $d$-closed form of bidegree $(n+m-1,n+m-1)$. Integration over the $m$-dimensional fibres defines
\[
 \Psi:=q_{*}\Xi\in A^{n-1,n-1}(X).
\]
Fibre integration commutes with $d$ up to the standard degree sign, so $d\Psi=0$.

It remains to prove strict positivity. Fix $x\in X$ and a non-zero $(1,0)$-covector $\lambda\in T^{*1,0}_xX$. On the fibre $\{x\}\times Y$,
\[
\Psi\wedge i\lambda\wedge\bar\lambda=q_{*}\bigl(\Xi\wedge q^{*}(i\lambda\wedge\bar\lambda)\bigr).
\]
The integrand is a positive top-degree form, and it is strictly positive because $\Xi$ is strictly positive and $\lambda\neq0$. Its integral over the compact fibre is therefore positive. This is the dual characterization of strict positivity for an $(n-1,n-1)$-form.

By Michelsohn's root theorem, there is a unique positive $(1,1)$-form $\beta$ on $X$ with $\beta^{n-1}=\Psi$. Since $d\Psi=0$, the metric $\beta$ is balanced.
\end{proof}

\subsection{Assembly of the counterexample}

\begin{proof}[Proof of \cref{thm:main-counterexample}]
Fix $N\geq5$ and put $m=N-3\geq2$. Let $\{I_t\}$ be the Alessandrini--Bassanelli family from \cref{thm:AB}, and choose compact curves $C_1,\dots,C_m$ of genus at least two. Set
\[
Y=C_1\times\cdots\times C_m\qquad\textrm{and}\qquad Z_t=I_t\times Y.
\]
This is an analytic family of compact complex $N$-folds.

The central fibre $I_0$ is balanced, as shown above, while $Y$ is K\"ahler hyperbolic by
\cref{lem:curves-KH}. Since $m\geq2$, \cref{prop:stabilization} shows that $Z_0=I_0\times Y$ is balanced hyperbolic.

Suppose $Z_t$ were balanced for some sufficiently small $t\neq0$. Applying
\cref{lem:factor-balanced} to the projection $Z_t\to I_t$ would imply that $I_t$ is
balanced, contradicting \cref{thm:AB}. Thus every sufficiently small non-central $Z_t$ is non-balanced and hence cannot be balanced hyperbolic.
\end{proof}

\begin{remark}
The construction proves non-openness in every dimension at least five. It does not by
itself settle dimensions three and four, and it does not address \cref{q:balanced-locus}, because its nearby fibres fail to remain balanced.
\end{remark}

\section{A finite-dimensional moving-intersection template}
\label{sec:finite-template}

Fix an integer $k\geq1$ and a compact smooth manifold $X$. Write
\[
H^k:=H^k_{\DR}(X,\R)
\]
and let $V^k\subset H^k$ be a fixed real vector subspace. In the hyperbolicity
applications of the next section, we take $V^k=\Vhyp^k(X)$. Let $\mathcal P_t^k\subset H^k$ be the moving subspace of classes satisfying the geometric pure-type or admissibility condition on $X_t=(X,J_t)$.

\begin{definition}\label{def:abstract-offtype}
A \emph{finite-dimensional off-type datum in degree $k$} consists of a smooth finite-rank real vector bundle $\mathcal E^k\to\Delta$ and a smooth real-linear bundle map
\[
\mathcal O^k:H^k\times\Delta\longrightarrow\mathcal E^k,\qquad\mathcal O_t^k:H^k\longrightarrow\mathcal E_t^k,
\]
such that
\begin{equation}\label{eq:abstract-kernel}
\mathcal P_t^k=\ker\mathcal O_t^k
\end{equation}
for every sufficiently small $t$. We call $\mathcal O_t^k(c)$ the \emph{off-type
obstruction} of $c$ in this model.
\end{definition}

The terminology is deliberately abstract. In degree two the map will be the harmonic
$(0,2)$-component in Dolbeault cohomology, whereas in degree $2n-2$ it will be the
harmonic $(n-2,n)$-component in Aeppli cohomology. In middle degrees there can be
several interacting off-type components, so \cref{def:abstract-offtype} does not assert
that every degree carries one canonical Dolbeault- or Aeppli-valued map. It packages
any finite-dimensional realization whose kernel is the required admissible subspace.

\subsection{First variation and infinitesimal admissibility}

Let $c_t\in H^k$ be a differentiable path with $c_0\in\mathcal P_0^k$. Choose any connection $\nabla$ on $\mathcal E^k$. Since $\mathcal O_0^k(c_0)=0$, the derivative at
$t=0$ of the section $t\mapsto\mathcal O_t^k(c_0)$ is independent of the chosen connection. Define the deformation term
\begin{equation}\label{eq:abstract-theta}
\Theta_{\partial_t}(c_0):=-\left.\nabla_{\partial/\partial t}\mathcal O_t^k(c_0)\right|_{t=0}\in\mathcal E_0^k.
\end{equation}

\begin{proposition}[Universal first variation]\label{prop:abstract-first-variation}
For every differentiable path $c_t\in H^k$ with $c_0\in\mathcal P_0^k$,
\begin{equation}\label{eq:abstract-first-variation}
\left.\nabla_{\partial/\partial t}\mathcal O_t^k(c_t)\right|_{t=0}
=\mathcal O_0^k(\dot c_0)-\Theta_{\partial_t}(c_0).
\end{equation}
If $c_0\in V^k$, first-order continuation through $c_0$ inside
$V^k\cap\mathcal P_t^k$ is therefore possible exactly when
\begin{equation}\label{eq:abstract-range}
\Theta_{\partial_t}(c_0)\in\mathcal O_0^k(V^k).
\end{equation}
When this holds, the admissible velocities are
\begin{equation}\label{eq:abstract-affine}
\mathcal S(c_0)=\{\xi\in V^k:\mathcal O_0^k(\xi)=\Theta_{\partial_t}(c_0)\},
\end{equation}
an affine space modelled on $V^k\cap\mathcal P_0^k$.
\end{proposition}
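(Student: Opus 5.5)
The proof is a Leibniz-rule computation for the covariant derivative of a section built from a bilinear pairing, followed by linear algebra.

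\emph{Step 1: the variation formula.} Consider the map $(c,t)\mapsto\mathcal O_t^k(c)$. It is linear in $c$ and smooth in $t$. Because $H^k$ is a fixed vector space, the section $t\mapsto\mathcal O_t^k(c_t)$ is obtained from the bundle map $\mathcal O^k$ and the curve $c_t$.

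Choose a basis $e_1,\dots,e_b$ of $H^k$ and write $c_t=\sum_i c^i(t)e_i$. Then
\[
\mathcal O_t^k(c_t)=\sum_i c^i(t)\,\mathcal O_t^k(e_i).
\]
Applying the Leibniz rule for $\nabla$ gives
\[
\nabla_{\partial_t}\mathcal O_t^k(c_t)\big|_{0}=\sum_i\dot c^i(0)\,\mathcal O_0^k(e_i)+\sum_i c^i(0)\,\nabla_{\partial_t}\mathcal O_t^k(e_i)\big|_0 .
\]
The first sum is $\mathcal O_0^k(\dot c_0)$. Since $\nabla_{\partial_t}$ is linear, the second sum is $\nabla_{\partial_t}\mathcal O_t^k(c_0)|_0$, the derivative of the section obtained by freezing $c=c_0$. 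By definition \eqref{eq:abstract-theta} this equals $-\Theta_{\partial_t}(c_0)$, which proves \eqref{eq:abstract-first-variation}.

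I should also verify that the connection-independence claim is consistent. Two connections differ by an endomorphism-valued one-form $A$. The difference of the two left-hand sides is $A(\partial_t)\mathcal O_0^k(c_0)=0$, so every term above is connection-independent. This is where the hypothesis $c_0\in\mathcal P_0^k$ is used.

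\emph{Step 2: the continuation statement.} Interpret "first-order continuation through $c_0$ inside $V^k\cap\mathcal P_t^k$" as a path $c_t$ in $V^k$ with $\mathcal O_t^k(c_t)=O(t^2)$. Equivalently, the path satisfies $c_t\in V^k$ and the derivative at $0$ of $\mathcal O_t^k(c_t)$ vanishes. Since $V^k$ is a fixed linear subspace, the velocities of such paths are exactly the vectors $\dot c_0\in V^k$, and any $\xi\in V^k$ is realized by the linear path $c_t=c_0+t\xi$.

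By Step 1, the vanishing condition reads $\mathcal O_0^k(\xi)=\Theta_{\partial_t}(c_0)$ with $\xi\in V^k$. Thus:
\begin{itemize}
\item the admissible velocities form exactly the set $\mathcal S(c_0)$ of \eqref{eq:abstract-affine};
\item $\mathcal S(c_0)$ is nonempty if and only if $\Theta_{\partial_t}(c_0)\in\mathcal O_0^k(V^k)$, which is \eqref{eq:abstract-range};
\item when nonempty, $\mathcal S(c_0)$ is a translate of $\ker(\mathcal O_0^k|_{V^k})=V^k\cap\mathcal P_0^k$, using \eqref{eq:abstract-kernel} at $t=0$.
\end{itemize}

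\emph{Main obstacle.} There is no analytic difficulty. The only delicate point is bookkeeping: making precise what "first-order continuation" means, namely vanishing to first order of the off-type obstruction along a path in the fixed subspace $V^k$. I would state this meaning explicitly at the start of the proof.
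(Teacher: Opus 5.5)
Your proposal is correct and follows the paper's own argument. You differentiate $\mathcal O_t^k(c_t)$ using linearity in $c_t$ and the definition of $\Theta_{\partial_t}(c_0)$, then read off the affine equation $\mathcal O_0^k(\xi)=\Theta_{\partial_t}(c_0)$ on $V^k$, whose translation space is $\ker(\mathcal O_0^k|_{V^k})=V^k\cap\mathcal P_0^k$. Your basis/Leibniz expansion and your remark that the linear paths $c_0+t\xi$ realize every $\xi\in V^k$ just make explicit what the paper leaves implicit.

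One small wording fix: the individual summands $\nabla_{\partial_t}\mathcal O_t^k(e_i)|_{0}$ are not connection-independent. Only their combination $\nabla_{\partial_t}\mathcal O_t^k(c_0)|_{0}$ and the two sides of the formula are, because $\mathcal O_0^k(c_0)=0$.
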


\begin{proof}
Differentiate $\mathcal O_t^k(c_t)$ and use linearity in $c_t$ together with
\eqref{eq:abstract-theta}; this gives \eqref{eq:abstract-first-variation}. If
$c_t$ stays in $V^k$ to first order, then $\dot c_0\in V^k$, and vanishing of the
left-hand side is exactly the equation in \eqref{eq:abstract-affine}. Its solvability is
\eqref{eq:abstract-range}, while its translation space is
$\ker(\mathcal O_0^k|_{V^k})=V^k\cap\mathcal P_0^k$ by \eqref{eq:abstract-kernel}.
\end{proof}

The preceding proposition gives a canonical obstruction once the off-type datum has been chosen:
\[
\operatorname{ob}_{c_0}(\partial_t):=[\Theta_{\partial_t}(c_0)]\in\operatorname{coker}\bigl(\mathcal O_0^k|_{V^k}\bigr).
\]
In a deformation direction for which this class is non-zero, the specified central class cannot be continued differentiably inside $V^k\cap\mathcal P_t^k$.

\subsection{Constant-rank continuation}

The first-order range condition need not be integrated directly. The actual continuation statement is a finite-dimensional constant-rank theorem.

\begin{theorem}[Moving-intersection continuation]\label{thm:abstract-continuation}
Assume \cref{def:abstract-offtype}, and suppose that $\operatorname{rank}_{\R}(\mathcal O_t^k|_{V^k})$ is locally constant. Then
\begin{equation}\label{eq:abstract-kernel-bundle}
\mathcal K^k:=\ker\bigl(\mathcal O^k|_{V^k\times\Delta}\bigr)\longrightarrow\Delta
\end{equation}
is a smooth real vector subbundle with fibre $V^k\cap\mathcal P_t^k$. Consequently every
$c_0\in V^k\cap\mathcal P_0^k$ admits a smooth local continuation
\[
c_t\in V^k\cap\mathcal P_t^k,\qquad c_{t=0}=c_0.
\]
Equivalently, one may assume that $\dim_{\R}(V^k\cap\mathcal P_t^k)$ is locally constant.
\end{theorem}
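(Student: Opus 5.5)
The plan is to treat this as a statement about a smooth family of linear maps between fixed-dimensional spaces, after trivializing locally. Shrink $\Delta$ to a small disc and choose a smooth local frame of $\mathcal E^k$, so that $\mathcal E^k\cong\Delta\times\R^e$. Fix a basis of $V^k$. Then $A_t:=\mathcal O_t^k|_{V^k}$ becomes a smooth $e\times v$ real matrix-valued function of $t$, where $v=\dim V^k$. By \eqref{eq:abstract-kernel}, its kernel is $V^k\cap\mathcal P_t^k$. Rank–nullity gives $\dim(V^k\cap\mathcal P_t^k)=v-\operatorname{rank}A_t$. Since $v$ is fixed, locally constant rank is equivalent to locally constant kernel dimension, which is the final ``equivalently'' clause.

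Next I would show that $\mathcal K^k$ is a smooth subbundle of the trivial bundle $V^k\times\Delta$.
\begin{itemize}
\item Let $\rho$ be the constant rank. At $t=0$, choose a complement $W\subset V^k$ of $\ker A_0$, so $\dim W=\rho$ and $A_0|_W$ is injective.
\item Injectivity is an open condition on smooth matrices, so $A_t|_W$ stays injective for $t$ near $0$.
\item Consequently $A_t(W)$ has dimension $\rho=\operatorname{rank}A_t$, hence $A_t(W)=\operatorname{im}A_t$.
\item For each $\xi\in\ker A_0$, define $s_t(\xi):=\xi-(A_t|_W)^{-1}A_t\xi$. Here $(A_t|_W)^{-1}$ denotes the inverse of the isomorphism $W\to\operatorname{im}A_t$, composed with a smooth projection.
\end{itemize}

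To make that projection smooth, I would proceed as follows. Because $A_t|_W$ is injective, the Gram matrix $(A_t|_W)^{\mathsf T}(A_t|_W)$ is invertible and smooth in $t$. Set
\[
L_t:=\bigl((A_t|_W)^{\mathsf T}A_t|_W\bigr)^{-1}(A_t|_W)^{\mathsf T}.
\]
This is a smooth left inverse. Put $s_t(\xi)=\xi-L_tA_t\xi$.

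Then I would verify the properties of $s_t$.
\begin{itemize}
\item Since $A_t\xi\in\operatorname{im}A_t=A_t(W)$, we have $A_tL_tA_t\xi=A_t\xi$, so $A_ts_t(\xi)=0$.
\item The map $s_t:\ker A_0\to\ker A_t$ is linear and smooth in $t$, and $s_0=\mathrm{id}$.
\item It is injective near $0$: its component along $\ker A_0$ in the splitting $V^k=\ker A_0\oplus W$ is the identity.
\item By equality of dimensions, $s_t$ is therefore an isomorphism onto $\ker A_t$.
\end{itemize}
Applying $s_t$ to a basis of $\ker A_0$ gives a smooth local frame of $\mathcal K^k$, so $\mathcal K^k$ is a smooth subbundle with fibre $V^k\cap\mathcal P_t^k$.

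The continuation is then immediate: for $c_0\in V^k\cap\mathcal P_0^k$, set $c_t:=s_t(c_0)$. Since $\mathcal K^k$ is defined through $\mathcal O^k$ only on a neighbourhood where \eqref{eq:abstract-kernel} holds, we shrink $\Delta$ accordingly.

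The only delicate point is the identity $\operatorname{im}A_t=A_t(W)$, which is where constant rank is genuinely used. Without it, the rank can jump up at nearby $t$. Then $A_t\xi$ may leave $A_t(W)$, and the correction fails, which is exactly the obstruction in \cref{prop:abstract-first-variation}. With constant rank the step is a dimension count, so I expect no real obstacle beyond arranging the smooth left inverse.
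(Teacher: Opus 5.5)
Your proof is correct and follows the same route as the paper, which simply invokes the constant-rank theorem for vector-bundle maps together with rank--nullity; your complement $W$ and Gram-matrix left inverse $L_t$ give a self-contained proof of that cited theorem. You built the frame $s_t$ only at $t=0$, but the identical argument at an arbitrary base point $t_0$ gives local frames everywhere, hence a smooth subbundle over the whole (shrunken) disc.
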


\begin{proof}
By \eqref{eq:abstract-kernel},
\[
\ker(\mathcal O_t^k|_{V^k})=V^k\cap\mathcal P_t^k.
\]
The constant-rank theorem for vector-bundle maps gives \eqref{eq:abstract-kernel-bundle}.  A vector in the fibre over $0$ extends to a smooth local section of a vector bundle. The equivalence with local constancy of the kernel dimension follows from rank-nullity because $V^k$ is fixed and finite-dimensional.
\end{proof}

\begin{criterion}\label{cor:abstract-surjective}
If $\mathcal O_0^k|_{V^k}:V^k\to\mathcal E_0^k$ is surjective, then it remains surjective for sufficiently small $t$, and every $c_0\in V^k\cap\mathcal P_0^k$ admits a smooth continuation in $V^k\cap\mathcal P_t^k$.
\end{criterion}

\begin{proof}
Surjectivity is open for a smooth family of finite-dimensional linear maps between vector bundles of fixed rank. Apply \cref{thm:abstract-continuation}.
\end{proof}

\begin{corollary}\label{cor:abstract-zero-target}
If the obstruction bundle $\mathcal E^k$ has rank zero near $0$, then $\mathcal P_t^k=H^k$ and every class of $V^k$ continues trivially.
\end{corollary}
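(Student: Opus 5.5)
The statement to prove is \cref{cor:abstract-zero-target}. The plan is to read off both assertions directly from \eqref{eq:abstract-kernel} and then invoke the continuation statements already proved.

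If $\mathcal E^k$ has rank zero on a neighbourhood of $0$, then each fibre $\mathcal E_t^k$ is the zero vector space. Hence $\mathcal O_t^k:H^k\to\mathcal E_t^k$ is the zero map for every sufficiently small $t$. Its kernel is therefore all of $H^k$, and \eqref{eq:abstract-kernel} gives $\mathcal P_t^k=\ker\mathcal O_t^k=H^k$. In particular $V^k\cap\mathcal P_t^k=V^k$ for all small $t$.

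For the continuation claim, I will take the constant path $c_t:=c_0$ for any $c_0\in V^k$. It lies in $V^k\cap\mathcal P_t^k=V^k$ for every small $t$ and is trivially smooth. This is the sense in which classes "continue trivially." As a consistency check, the same conclusion follows from \cref{cor:abstract-surjective}: the map $\mathcal O_0^k|_{V^k}$ into the zero space is surjective. It also follows from \cref{thm:abstract-continuation}: the rank of $\mathcal O_t^k|_{V^k}$ is identically $0$, so $\mathcal K^k=V^k\times\Delta$ is the trivial bundle. In addition, the deformation term $\Theta_{\partial_t}(c_0)$ vanishes, so there is no first-order obstruction in \cref{prop:abstract-first-variation}.

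I do not expect any genuine obstacle here. The only point needing care is that "rank zero near $0$" must be used on a whole neighbourhood, not just at $t=0$. Otherwise \eqref{eq:abstract-kernel} would only give $\mathcal P_0^k=H^k$. Since a smooth vector bundle has locally constant rank, rank zero at $0$ already forces rank zero nearby, so this is automatic.
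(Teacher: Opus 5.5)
Your proof is correct and takes essentially the same approach as the paper, which states this corollary without a separate proof as the trivial case of the preceding surjectivity criterion. With zero fibres, $\mathcal O_t^k$ is the zero map, so $\mathcal P_t^k=\ker\mathcal O_t^k=H^k$ and the constant path continues every class of $V^k$; you also note the surjectivity-criterion route as a cross-check.
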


The surjectivity hypothesis has a useful intrinsic reformulation.

\begin{proposition}[Transversality]\label{prop:abstract-transversality}
Assume that $\mathcal O_0^k:H^k\to\mathcal E_0^k$ is surjective and $\ker\mathcal O_0^k=\mathcal P_0^k$. Then the following are equivalent:
\begin{enumerate}[label=\textup{(\roman*)}]
\item $\mathcal O_0^k|_{V^k}$ is surjective;
\item $V^k+\mathcal P_0^k=H^k$.
\end{enumerate}
If, in addition, $H^k$ is paired perfectly with a finite-dimensional real vector space $H'$ and $\mathcal T_0:=(\mathcal P_0^k)^\perp\subset H'$, then these are also equivalent to
\begin{enumerate}[label=\textup{(\roman*)},start=3]
\item $(V^k)^\perp\cap\mathcal T_0=\{0\}$.
\end{enumerate}
\end{proposition}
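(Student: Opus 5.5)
The argument is pure linear algebra at $t=0$, so I drop the superscript $k$ and the subscript $0$ throughout and write $\mathcal O:H\to\mathcal E$ for the surjection with $\ker\mathcal O=\mathcal P$.

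\emph{(i)$\Leftrightarrow$(ii).} Since $\mathcal O$ is surjective with kernel $\mathcal P$, it induces a linear isomorphism
\[
\bar{\mathcal O}:H/\mathcal P\xrightarrow{\ \cong\ }\mathcal E.
\]
The image $\mathcal O(V)$ equals $\bar{\mathcal O}\bigl((V+\mathcal P)/\mathcal P\bigr)$. Hence $\mathcal O(V)=\mathcal E$ holds exactly when $(V+\mathcal P)/\mathcal P=H/\mathcal P$, that is, when $V+\mathcal P=H$.

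To write this out elementwise, I argue in both directions.
\begin{itemize}
\item Assume (i) and take $h\in H$. Choose $v\in V$ with $\mathcal O(v)=\mathcal O(h)$. Then $h-v\in\ker\mathcal O=\mathcal P$, so $h=v+(h-v)\in V+\mathcal P$.
\item Assume (ii) and take $e\in\mathcal E$. Surjectivity of $\mathcal O$ gives $h$ with $\mathcal O(h)=e$. Write $h=v+p$ with $v\in V$ and $p\in\mathcal P$. Then $\mathcal O(v)=\mathcal O(h)-\mathcal O(p)=e$, so $\mathcal O|_V$ is surjective.
\end{itemize}

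\emph{(ii)$\Leftrightarrow$(iii).} Here I use the perfect pairing $H\times H'\to\R$. Annihilators are taken in $H'$, with $W^\perp:=\{h'\in H':\langle w,h'\rangle=0\ \text{for all }w\in W\}$.

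First, for any two subspaces one has $(V+\mathcal P)^\perp=V^\perp\cap\mathcal P^\perp$. This is immediate from bilinearity: $h'$ annihilates $V+\mathcal P$ if and only if it annihilates both $V$ and $\mathcal P$. By definition $\mathcal P^\perp=\mathcal T_0$, so $(V+\mathcal P)^\perp=V^\perp\cap\mathcal T_0$.

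Second, for a subspace $W\subseteq H$, perfectness of the pairing gives $\dim W^\perp=\dim H-\dim W$. In particular $W=H$ if and only if $W^\perp=\{0\}$.

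Applying this with $W=V+\mathcal P$, condition (ii) holds if and only if $V^\perp\cap\mathcal T_0=\{0\}$, which is (iii).

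The only point needing any care is the dimension formula for annihilators. It relies on finite-dimensionality and on the pairing being nondegenerate on both sides, and both are supplied by the hypotheses. I expect no real obstacle.
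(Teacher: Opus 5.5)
Your proof is correct and follows essentially the same route as the paper: (i)$\Leftrightarrow$(ii) by writing each $h$ as $v+p$ using surjectivity and $\ker\mathcal O_0^k=\mathcal P_0^k$, and (ii)$\Leftrightarrow$(iii) via $(V^k+\mathcal P_0^k)^\perp=(V^k)^\perp\cap\mathcal T_0$ together with non-degeneracy of the pairing. The only difference is that you spell out the annihilator dimension count $\dim W^\perp=\dim H-\dim W$, which the paper leaves implicit.
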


\begin{proof}
Because the full map is surjective, $\mathcal O_0^k(V^k)=\mathcal E_0^k$ if and only if every $h\in H^k$ can be written as $h=v+p$ with $v\in V^k$ and $p\in\ker\mathcal O_0^k=\mathcal P_0^k$. This proves \textup{(i)}$\Leftrightarrow$\textup{(ii)}. Under a perfect pairing,
\[
(V^k+\mathcal P_0^k)^\perp=(V^k)^\perp\cap(\mathcal P_0^k)^\perp=(V^k)^\perp\cap\mathcal T_0,
\]
and non-degeneracy proves the last equivalence.
\end{proof}

\begin{remark}\label{rem:abstract-positivity}
\Cref{thm:abstract-continuation} is only a theorem about finite-dimensional de Rham
classes. A geometric hyperbolicity statement also requires a positive representative
of the continued class. That second step is not formal and is treated separately in
the two realizations below: Aeppli homotopy operators in degree $2n-2$, and the $\dbar$-Green operator in degree two.
\end{remark}

\section{The degree-(2n-2) Aeppli realization}
\label{sec:compact-program}

We now realize \cref{sec:finite-template} by taking
\[
H=H^{2n-2}_{\DR}(X,\R),\qquad V=\Vhyp^{2n-2}(X),
\]
and letting $\mathcal P_t$ be the space of classes admitting real $d$-closed $(n-1,n-1)$-representatives. The continuation of de Rham classes is supplied by \cref{thm:abstract-continuation}; the remaining work specific to balanced hyperbolicity is to construct the Aeppli off-type datum, identify its deformation term, and realize each continued class by a nearby strictly positive closed form. 

\subsection{The Aeppli off-type realization}
\label{subsec:aeppli-obstruction}

Recall that the Aeppli cohomology of a complex manifold is
\[
H^{p,q}_{\Aep}(X)=\frac{\kerop(\partial\dbar:A^{p,q}\to A^{p+1,q+1})}{\im\partial+\im\dbar}.
\]
Fix a real de Rham class $c\in H$ and a real $d$-closed representative $\Omega$. Relative to $J_t$, decompose the real $(2n-2)$-form $\Omega$ uniquely as
\[
\Omega=A_t+B_t+\overline{A_t},\qquad A_t\in A^{n-2,n}_{J_t}(X),\quad B_t\in A^{n-1,n-1}_{J_t}(X,\R).
\]
Since $A_t$ has antiholomorphic degree $n$, one has $\dbar_tA_t=0$ by bidegree, and
hence $\partial_t\dbar_tA_t=0$. Thus $A_t$ defines an Aeppli class. We set
\[
 \mathfrak o_t(c):=[A_t]_{\Aep}\in H^{n-2,n}_{\Aep}(X_t)
\]
and call $\mathfrak o_t(c)$ the \emph{off-type Aeppli obstruction} of $c$ at $t$.

\begin{proposition}\label{prop:obstruction-well-defined}
The class $\mathfrak o_t(c)$ is independent of the chosen $d$-closed representative
$\Omega$ of $c$. Moreover, $\mathfrak o_t(c)=0$ if and only if $c$ admits a real, $d$-closed representative of pure type $(n-1,n-1)$ for $J_t$.
\end{proposition}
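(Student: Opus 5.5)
Both assertions come down to a bidegree analysis of $d=\partial_t+\dbar_t$ acting on real $(2n-2)$-forms, where $J_t$ is fixed and we write $\partial,\dbar$ for $\partial_t,\dbar_t$.

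\emph{Independence of the representative.} Take two real $d$-closed representatives $\Omega,\Omega'$ of $c$. Then $\Omega'-\Omega=d\eta$ for a real $(2n-3)$-form $\eta$. Decompose $\eta=\sum_{p+q=2n-3}\eta^{p,q}$. The $(n-2,n)$-component of $d\eta$ is
\[
(d\eta)^{n-2,n}=\partial\eta^{n-3,n}+\dbar\eta^{n-2,n-1}.
\]
This lies in $\im\partial+\im\dbar$. Hence $A_t'-A_t$ is zero in $H^{n-2,n}_{\Aep}(X_t)$, so $\mathfrak o_t(c)$ depends only on $c$.

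\emph{The easy direction of the equivalence.} If $c$ has a real $d$-closed pure-type $(n-1,n-1)$-representative, then computing $A_t$ from that representative gives $A_t=0$. By independence, $\mathfrak o_t(c)=0$.

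\emph{The converse.} Suppose $A_t=\partial u+\dbar v$ with $u\in A^{n-3,n}$ and $v\in A^{n-2,n-1}$. We want a real $(2n-3)$-form $\eta$ such that $\Omega-d\eta$ has no $(n-2,n)$ or $(n,n-2)$ components. The candidate is
\[
\eta=u+v+\bar u+\bar v.
\]
This $\eta$ is real, so $\Omega-d\eta$ is real and $d$-closed in the class $c$. We compute its $(n-2,n)$-component:
\begin{itemize}
\item $d u$ contributes $\partial u$, since $\dbar u$ has bidegree $(n-3,n+1)=0$;
\item $d v$ contributes $\dbar v$;
\item $\bar u\in A^{n,n-3}$ and $\bar v\in A^{n-1,n-2}$ only produce bidegrees $(n,n-2)$, $(n+1,n-3)$ and $(n-1,n-1)$, so they contribute nothing.
\end{itemize}
Hence the $(n-2,n)$-part of $\Omega-d\eta$ is $A_t-\partial u-\dbar v=0$. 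By reality, the $(n,n-2)$-part vanishes as well.

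It remains to rule out other components. Since $\Omega$ has only the components $A_t$, $B_t$ and $\overline{A_t}$, and $d\eta$ has components in the bidegrees $(n-2,n)$, $(n-1,n-1)$ and $(n,n-2)$ only, the difference $\Omega-d\eta$ is a real $d$-closed form of pure type $(n-1,n-1)$ representing $c$.

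The only step needing care is this bidegree bookkeeping, in particular checking that $\bar u$ and $\bar v$ do not feed back into the $(n-2,n)$ slot. It holds because $\bar u$ and $\bar v$ have holomorphic degree at least $n-1$. I expect no analytic obstacle, since no Hodge theory is needed.
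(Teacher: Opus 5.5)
Your proof is correct and matches the paper's argument: the same formula $(d\eta)^{n-2,n}=\partial_t\eta^{n-3,n}+\dbar_t\eta^{n-2,n-1}$ gives independence of the representative, and the same real correction $\eta=u+v+\bar u+\bar v$ kills the off-type components. Your easy direction, which computes $A_t=0$ from a pure-type representative and then invokes independence, is equivalent to the paper's step of taking the $(n-2,n)$-component of $\Omega-\Omega_t'=d\eta$.
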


\begin{proof}
If $\Omega'=\Omega+d\eta$, then the $(n-2,n)$-component changes by
\[
(d\eta)^{n-2,n}=\partial_t\eta^{n-3,n}+\dbar_t\eta^{n-2,n-1},
\]
which is Aeppli-exact. Hence $\mathfrak o_t(c)$ is well-defined.

Suppose first that $\mathfrak o_t(c)=0$. Then there exist
\[
v_t\in A^{n-3,n}_{J_t}(X),\qquad u_t\in A^{n-2,n-1}_{J_t}(X)
\]
such that
\begin{equation}\label{eq:aeppli-primitive}
 A_t=\partial_tv_t+\dbar_tu_t.
\end{equation}
Set
\[
\eta_t=v_t+u_t+\overline{u_t}+\overline{v_t},\qquad\Omega_t'=\Omega-d\eta_t.
\]
The $(n-2,n)$-component of $d\eta_t$ is $A_t$, and the $(n,n-2)$-component is $\overline{A_t}$. Therefore $\Omega_t'$ is real and of pure type $(n-1,n-1)$. It is $d$-closed and represents $c$.

Conversely, suppose that a real $d$-closed $(n-1,n-1)$-form $\Omega_t'$ represents $c$. Writing $\Omega-\Omega_t'=d\eta$ and taking the $(n-2,n)$-component gives
\[
 A_t=\partial_t\eta^{n-3,n}+\dbar_t\eta^{n-2,n-1},
\]
so $[A_t]_{\Aep}=0$.
\end{proof}

Accordingly, if
\[
\mathcal P_t:=\{c\in H:c\text{ admits a real }d\text{-closed }(n-1,n-1)\text{-representative on }X_t\},
\]
then
\[
 \mathcal P_t=\ker\mathfrak o_t.
\]
Thus the pure-type problem has been converted into the vanishing of a finite-dimensional
cohomological obstruction.

Since strict positivity is open, \cref{prop:obstruction-well-defined} immediately yields
the following criterion.

\begin{criterion}\label{crit:aeppli}
Let $c=[\Omega]_{\DR}\in V$, where $\Omega$ is strictly positive of type $(n-1,n-1)$ for $J_0$. Suppose that, for all sufficiently small $t$, one can solve \eqref{eq:aeppli-primitive} with
\[
 u_t\to0,\qquad v_t\to0\quad\text{in }C^1.
\]
Then $X_t$ is balanced hyperbolic for every sufficiently small $t$.
\end{criterion}

\begin{proof}
By \cref{prop:obstruction-well-defined}, the form
\[
\Omega_t'=\Omega-d\bigl(v_t+u_t+\overline{u_t}+\overline{v_t}\bigr)
\]
is real, $d$-closed, of type $(n-1,n-1)$ for $J_t$, and cohomologous to $\Omega$. The $C^1$-smallness implies $\Omega_t'\to\Omega$ in $C^0$. Since positivity is open jointly in the form and the complex structure, $\Omega_t'$ is strictly positive for small $t$. Apply \cref{crit:fixed-class}.
\end{proof}

Having isolated the off-type obstruction, we now realize it by Aeppli--Hodge theory. This supplies the harmonic projections and homotopy operators used both in the infinitesimal analysis of the obstruction and in the finite-dimensional continuation argument below.

On a compact Hermitian manifold there is a fourth-order self-adjoint elliptic Aeppli
Laplacian $\Delta_{\Aep}$ whose kernel $\mathcal H^{p,q}_{\Aep}$ represents $H^{p,q}_{\Aep}$, and one has the orthogonal decomposition
\[
A^{p,q}=\mathcal H^{p,q}_{\Aep}\oplus(\operatorname{Im}\partial+\operatorname{Im}\dbar)\oplus\operatorname{Im}(\partial\dbar)^*.
\]
This decomposition is proved in \cite[Section~2.c]{Sch07}; see also \cite{Pop15}. For a smooth family of complex structures and Hermitian metrics, standard parametric elliptic theory gives smoothly varying harmonic projections, Green operators, and homotopy operators whenever the relevant Aeppli number is locally constant \cite{HX25,KS60}. More explicitly, after shrinking the parameter disc, local constancy of $\dim\ker\Delta_{\Aep,t}$ makes the harmonic projections and Green operators depend smoothly on $t$; elliptic regularity then gives uniform Sobolev bounds on compact parameter subsets. The resulting Aeppli homotopy operators may therefore be chosen to depend smoothly on $t$ and to gain one Sobolev derivative. Thus, for every Sobolev index $s$, there are locally uniformly bounded operators
\[
(S_t^{\partial},S_t^{\dbar}):\ker(\partial_t\dbar_t)\cap H^s(A^{p,q}_{J_t})\longrightarrow H^{s+1}(A^{p-1,q}_{J_t})\oplus H^{s+1}(A^{p,q-1}_{J_t})
\]
such that
\begin{equation}\label{eq:aeppli-homotopy}
\alpha-\mathcal H_{\Aep,t}\alpha=\partial_tS_t^{\partial}\alpha+\dbar_tS_t^{\dbar}\alpha.
\end{equation}

In particular, whenever $h^{n-2,n}_{\Aep}(X_t)$ is locally constant, the harmonic spaces
form a smooth vector bundle
\begin{equation}\label{eq:aeppli-bundle}
\mathcal E_{\Aep}:=\bigsqcup_{t\in\Delta}\mathcal H^{n-2,n}_{\Aep,t}\longrightarrow\Delta,
\end{equation}
which we identify fibrewise with $H^{n-2,n}_{\Aep}(X_t)$.

\subsection{First variation: the Aeppli deformation term}
\label{subsec:first-variation}

We now study the first variation of the off-type obstruction. The resulting formula identifies the tangent condition for a de Rham class in the fixed hyperbolic subspace $V$ to remain in the moving pure-type subspace $\mathcal P_t$.

Assume throughout this subsection that $h^{n-2,n}_{\Aep}(X_t)$ is locally constant, and
identify Aeppli classes with their harmonic representatives in the bundle $\mathcal E_{\Aep}$ from \eqref{eq:aeppli-bundle}. At a zero of a section, its covariant derivative is independent of the chosen connection.

Let $\varphi(t)=t\kappa+O(t^2)$ be a Beltrami differential for a real one-parameter path
$J_t$. The linearized Maurer--Cartan equation gives $\dbar\kappa=0$. We use the convention that, in local $J_0$-holomorphic coordinates, the $(1,0)$-cotangent bundle of $J_t$ is spanned to first order by
\[
 dz^i+\varphi^i_{\bar j}(t)\,d\bar z^j.
\]
With the opposite convention, the sign of the contraction term below is reversed; its
vanishing and range conditions are unchanged.

\begin{proposition}\label{prop:first-variation}
Let $c_t\in H$ be differentiable, and suppose that $c_0$ admits a real $d$-closed
$(n-1,n-1)$-representative $\Omega_0$. If $\kappa$ is the Kodaira--Spencer direction,
then
\begin{equation}\label{eq:first-variation}
\left.\nabla_{\partial/\partial t}\mathfrak o_t(c_t)\right|_{t=0}
=\mathfrak o_0(\dot c_0)-[\iota_\kappa\Omega_0]_{\Aep}
\in H^{n-2,n}_{\Aep}(X_0).
\end{equation}
Here the derivative is connection-independent because $\mathfrak o_0(c_0)=0$. Consequently, a fixed class has vanishing linearized obstruction exactly when
$[\iota_\kappa\Omega_0]_{\Aep}=0$; if $c_0\in V$, first-order continuation inside
$V\cap\mathcal P_t$ is possible exactly when
\begin{equation}\label{eq:first-order-V}
[\iota_\kappa\Omega_0]_{\Aep}\in\mathfrak o_0(V).
\end{equation}
When this holds, the admissible velocities are
\[
\mathcal S_{\kappa}(c_0)=\{\xi\in V:\mathfrak o_0(\xi)=[\iota_\kappa\Omega_0]_{\Aep}\}.
\]
\end{proposition}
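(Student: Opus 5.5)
Since $\mathfrak o_t$ is real-linear in $c$ and, by \cref{prop:obstruction-well-defined}, defined by a representative-independent recipe, I will reduce \eqref{eq:first-variation} to the universal formula \eqref{eq:abstract-first-variation}. By \cref{prop:abstract-first-variation}, with $\mathcal O_t=\mathfrak o_t$ (viewed as a map into the harmonic bundle $\mathcal E_{\Aep}$, which is smooth under the standing constancy assumption on $h^{n-2,n}_{\Aep}$), it suffices to compute the deformation term $\Theta_{\partial_t}(c_0)=-\nabla_{\partial_t}\mathfrak o_t(c_0)|_{t=0}$ for the fixed class $c_0$. I will show it equals $[\iota_\kappa\Omega_0]_{\Aep}$. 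The consequences (vanishing criterion, range condition \eqref{eq:first-order-V}, affine velocity space $\mathcal S_\kappa(c_0)$) then follow verbatim from \cref{prop:abstract-first-variation}, using $\ker\mathfrak o_0=\mathcal P_0$.

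To compute $\Theta$, I fix the $t$-independent representative $\Omega_0$ and let $A_t=\Omega_0^{n-2,n}_{J_t}$. I expand the type decomposition to first order. Write $\theta^i_t=dz^i+t\kappa^i_{\bar j}d\bar z^j+O(t^2)$ for the $J_t$-$(1,0)$-coframe, and correspondingly $\bar\theta^j_t=d\bar z^j+O(t)$. Re-expressing $dz^i=\theta^i_t-t\kappa^i_{\bar j}d\bar z^j+O(t^2)$ in the $J_0$-$(n-1,n-1)$-form $\Omega_0$, the only first-order term landing in bidegree $(n-2,n)$ for $J_t$ comes from replacing one $dz^i$ by $-t\kappa^i_{\bar j}d\bar z^j$. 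This gives
\[
A_t=-t\,\iota_\kappa\Omega_0+O(t^2),
\]
where the $O(t^2)$ remainder is measured in the $J_0$-frame after transporting types via the first-order isomorphism. The sign depends on the stated convention, consistent with the paper's remark. Hence $\mathfrak o_t(c_0)=-t[\iota_\kappa\Omega_0]_{\Aep}+O(t^2)$ once the Aeppli class is evaluated through the harmonic projection. Note that $\iota_\kappa\Omega_0$ is $\dbar$-closed by bidegree, so $[\iota_\kappa\Omega_0]_{\Aep}$ makes sense.

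The main obstacle is justifying that the derivative of the section $t\mapsto[A_t]_{\Aep}\in\mathcal E_{\Aep,t}$ at its zero equals the class of the derivative $\dot A_0$. Because the space $A^{n-2,n}_{J_t}$ itself moves with $t$, I will trivialize it by a smooth bundle isomorphism $\Phi_t:A^{n-2,n}_{J_0}\to A^{n-2,n}_{J_t}$ with $\Phi_0=\mathrm{id}$, for instance the projection to $J_t$-type. I then write $\mathfrak o_t(c_0)=\mathcal H_{\Aep,t}A_t$. The harmonic projections are smooth in $t$ by the parametric theory quoted before \eqref{eq:aeppli-homotopy}. Since $A_0=0$, the product rule gives $\frac{d}{dt}\mathcal H_{\Aep,t}A_t|_0=\mathcal H_{\Aep,0}\dot A_0$, with no contribution from $\dot{\mathcal H}$. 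This is exactly why connection-independence holds at a zero. Finally $\mathcal H_{\Aep,0}(-\iota_\kappa\Omega_0)$ represents $-[\iota_\kappa\Omega_0]_{\Aep}$, which gives $\Theta_{\partial_t}(c_0)=[\iota_\kappa\Omega_0]_{\Aep}$ and hence \eqref{eq:first-variation}.
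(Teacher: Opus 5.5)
Your proposal is correct and follows essentially the paper's argument. You use the same first-order type expansion, giving $-\iota_\kappa\Omega_0$ as the $t$-derivative of the $J_t$-$(n-2,n)$-component, then differentiate $\mathcal H_{\Aep,t}A_t$ at its zero and specialize \cref{prop:abstract-first-variation} for the consequences; the only difference is that the paper absorbs the drift $\dot c_0$ directly through representatives $\Omega_0+t\Theta+O(t^2)$, whereas you first split it off by linearity via the universal first-variation formula.
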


\begin{proof}
Choose real $d$-closed representatives $\Omega_t=\Omega_0+t\Theta+O(t^2)$ of $c_t$, with $[\Theta]_{\DR}=\dot c_0$, and set $A_t=(\Omega_t)^{n-2,n}_{J_t}$. The moving-type
linearization is
\[
\dot A_0=\Theta^{n-2,n}_{J_0}-\iota_\kappa\Omega_0.
\]
Since $A_0=0$, differentiating the harmonic representative $\mathcal H_{\Aep,t}A_t$ gives $\mathcal H_{\Aep,0}\dot A_0$. Passing to Aeppli cohomology yields \eqref{eq:first-variation}. The remaining assertions are the specialization of \cref{prop:abstract-first-variation} to this Aeppli realization.
\end{proof}

Thus, for a deformation direction represented by $\kappa$, the abstract term in
\cref{prop:abstract-first-variation} is
\[
\Theta_{\partial_t}(c_0)=[\iota_\kappa\Omega_0]_{\Aep}.
\]
In particular, \eqref{eq:first-order-V} is exactly the abstract range condition
\eqref{eq:abstract-range}.

\begin{corollary}[First-order obstruction to continuation]\label{cor:first-order-no-go}
Let $c_0\in V\cap\mathcal P_0$, and let $\Omega_0$ be a real $d$-closed $(n-1,n-1)$-representative of $c_0$. If $\kappa$ is the Kodaira--Spencer direction of the deformation and
\[
 [\iota_\kappa\Omega_0]_{\Aep}\notin\mathfrak o_0(V),
\]
then there is no differentiable path $c_t\in V$ with $c_{t=0}=c_0$ and $\mathfrak o_t(c_t)=0$ for all small $t$. Consequently, there is no differentiable family of real $d$-closed $(n-1,n-1)$-forms $\Omega_t$ on $X_t$ satisfying
\[
[\Omega_t]_{\DR}\in V,\qquad\Omega_{t=0}=\Omega_0.
\]
\end{corollary}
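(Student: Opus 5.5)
The plan is to argue by contraposition, reading the statement as the Aeppli specialization of \cref{prop:abstract-first-variation} via the first-variation formula \cref{prop:first-variation}. Suppose there were a differentiable path $c_t\in V$ with $c_{t=0}=c_0$ and $\mathfrak o_t(c_t)=0$ for all small $t$. Since $V$ is a fixed linear subspace of the fixed space $H$, the velocity satisfies $\dot c_0\in V$. The section $t\mapsto\mathfrak o_t(c_t)$ of $\mathcal E_{\Aep}$ vanishes identically, so its covariant derivative at $t=0$ vanishes. This derivative is connection-independent because $\mathfrak o_0(c_0)=0$.

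Applying \eqref{eq:first-variation} then gives
\[
0=\mathfrak o_0(\dot c_0)-[\iota_\kappa\Omega_0]_{\Aep},
\]
so $[\iota_\kappa\Omega_0]_{\Aep}=\mathfrak o_0(\dot c_0)\in\mathfrak o_0(V)$, contradicting the hypothesis. Here the right-hand side of \eqref{eq:first-variation} uses only the chosen representative $\Omega_0$ of $c_0$. By \cref{prop:obstruction-well-defined}, the Aeppli class $[\iota_\kappa\Omega_0]_{\Aep}$ is what appears, and the hypothesis is phrased with the same $\Omega_0$, so this is consistent.

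For the second assertion, take a differentiable family of real $d$-closed $(n-1,n-1)$-forms $\Omega_t$ on $X_t$ with $[\Omega_t]_{\DR}\in V$ and $\Omega_{t=0}=\Omega_0$. I would set $c_t:=[\Omega_t]_{\DR}$. This is differentiable in $H$, since de Rham projection of a differentiable family of closed forms is differentiable, for example via harmonic projection for a fixed metric. It lies in $V$ and satisfies $c_{t=0}=c_0$. By \cref{prop:obstruction-well-defined}, $\mathfrak o_t(c_t)=0$ because $c_t$ has a real $d$-closed pure-type representative. The first part then rules this out.

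The one point needing care is the standing hypothesis of \cref{subsec:first-variation}, namely that $h^{n-2,n}_{\Aep}(X_t)$ is locally constant, so that $\mathcal E_{\Aep}$ is a bundle and the derivative makes sense. I would state the corollary under that standing assumption. Alternatively, one could avoid the assumption by differentiating the representative $A_t=(\Omega_t)^{n-2,n}_{J_t}$ directly. From $\mathfrak o_t(c_t)=0$, write $A_t=\partial_t v_t+\dbar_t u_t$ and differentiate at $t=0$, using $A_0=0$, to get $\dot A_0\in\im\partial_0+\im\dbar_0$. This requires choosing $u_t,v_t$ differentiably, which is exactly where constancy of the Aeppli number, through the smooth homotopy operators \eqref{eq:aeppli-homotopy}, is used. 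I therefore expect this regularity issue to be the only real obstacle; the rest is formal.
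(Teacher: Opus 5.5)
Your proposal is correct and matches the paper's proof. A path $c_t\in V$ with $\mathfrak o_t(c_t)\equiv0$ forces $0=\mathfrak o_0(\dot c_0)-[\iota_\kappa\Omega_0]_{\Aep}$ with $\dot c_0\in V$ via \eqref{eq:first-variation}, and the second assertion follows by passing to the de Rham classes $c_t=[\Omega_t]_{\DR}$; your caveat is also right, since the corollary sits in \cref{subsec:first-variation}, where local constancy of $h^{n-2,n}_{\Aep}(X_t)$ is assumed throughout.
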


\begin{proof}
Any such path would satisfy
\[
0=\left.\frac{d}{dt}\right|_{t=0}\mathfrak o_t(c_t)=\mathfrak o_0(\dot c_0)-[\iota_\kappa\Omega_0]_{\Aep},
\]
with $\dot c_0\in V$, contradicting the stated non-membership. A family $\Omega_t$ as in the second assertion would induce precisely such a path through its de Rham classes.
\end{proof}

\begin{remark}
Formula \eqref{eq:first-variation} separates two effects that can be conflated in a balanced-extension argument. The term $[\iota_\kappa\Omega_0]_{\Aep}$ measures the change of Hodge type induced by the deformation of the complex structure, whereas $\mathfrak o_0(\dot c_0)$ records the compensating drift of the de Rham class. For a fixed class the second term disappears; allowing the class to move inside $V$ permits compensation, but only through the restricted range $\mathfrak o_0(V)$ appearing in \eqref{eq:first-order-V}. Thus \cref{cor:first-order-no-go} obstructs continuation of the specified central class, but does not exclude the possibility that another class in $V$ becomes positive and of pure type on the nearby fibre.
\end{remark}

\subsection{Continuation and positivity}
\label{subsec:constant-rank}

The preceding calculation identifies the deformation term in the abstract template. We
now construct the corresponding harmonic off-type map. The continuation of the de Rham class itself will follow from \cref{thm:abstract-continuation}; Aeppli--Hodge theory is then used only for the geometric realization and positivity step.

Continue to assume that $h^{n-2,n}_{\Aep}(X_t)$ is locally constant, and use the smooth
bundle $\mathcal E_{\Aep}$ from \eqref{eq:aeppli-bundle}. Fix a Riemannian metric on $X$ and let
\[
 s:H\longrightarrow Z^{2n-2}(X,\R)
\]
assign to each de Rham class its unique harmonic representative. Thus $s$ is real-linear and is a right inverse of the natural quotient map
\[
q:Z^{2n-2}(X,\R)\longrightarrow H,\qquad \Omega\longmapsto[\Omega]_{\DR}.
\]
Define a smooth real-linear bundle map
\begin{equation}\label{eq:obstruction-map}
\mathcal O_t:H\longrightarrow\mathcal E_{\Aep,t},\qquad\mathcal O_t(c):=\mathcal H_{\Aep,t}\bigl((s(c))^{n-2,n}_{J_t}\bigr).
\end{equation}
Thus $\mathcal O_t(c)$ is precisely the Aeppli-harmonic representative of $\mathfrak o_t(c)$.

\begin{lemma}\label{lem:kernel-pure}
For every sufficiently small $t$,
\[
 \ker\mathcal O_t=\mathcal P_t.
\]
In particular, the kernel is independent of the choice of $s$ and of the Hermitian metrics used to define the harmonic projection.
\end{lemma}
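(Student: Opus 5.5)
The identification $\ker\mathcal O_t=\ker\mathfrak o_t$ follows by unwinding the definitions; combined with \cref{prop:obstruction-well-defined} it gives the lemma.

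\emph{Step 1: $\mathcal O_t(c)$ represents $\mathfrak o_t(c)$.} Fix $c\in H$. The form $s(c)$ is a real $d$-closed representative of $c$. By \cref{prop:obstruction-well-defined}, its $(n-2,n)$-component $(s(c))^{n-2,n}_{J_t}$ is $\partial_t\dbar_t$-closed and represents $\mathfrak o_t(c)$ in $H^{n-2,n}_{\Aep}(X_t)$, whatever representative was chosen.

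\emph{Step 2: vanishing of the harmonic part.} Apply the homotopy identity \eqref{eq:aeppli-homotopy} with $\alpha=(s(c))^{n-2,n}_{J_t}$:
\[
\alpha-\mathcal H_{\Aep,t}\alpha=\partial_tS_t^{\partial}\alpha+\dbar_tS_t^{\dbar}\alpha .
\]
So $\alpha$ and $\mathcal H_{\Aep,t}\alpha$ have the same Aeppli class. The orthogonal decomposition $A^{p,q}=\mathcal H^{p,q}_{\Aep}\oplus(\operatorname{Im}\partial+\operatorname{Im}\dbar)\oplus\operatorname{Im}(\partial\dbar)^*$ shows that a harmonic form is Aeppli-exact only if it vanishes. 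Hence $\mathcal O_t(c)=0$ if and only if $\mathfrak o_t(c)=0$.

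\emph{Step 3: conclusion.} By \cref{prop:obstruction-well-defined}, $\mathfrak o_t(c)=0$ exactly when $c$ has a real $d$-closed $(n-1,n-1)$-representative for $J_t$, i.e.\ when $c\in\mathcal P_t$. Therefore $\ker\mathcal O_t=\ker\mathfrak o_t=\mathcal P_t$.

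The independence claims follow directly. $\mathfrak o_t$ depends only on $c$ and $J_t$, not on $s$. Changing the Hermitian metric changes only the harmonic representative of the same Aeppli class, and the argument of Step 2 applies for any metric.

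The restriction to small $t$ is needed only so that the harmonic projection $\mathcal H_{\Aep,t}$ comes from the smoothly varying family with locally constant $h^{n-2,n}_{\Aep}$, making $\mathcal O_t$ a smooth bundle map; the kernel identity itself holds at each fixed $t$.

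I expect no real obstacle. The one point to check is that the harmonic projection $\mathcal H_{\Aep,t}$ in \eqref{eq:obstruction-map} is the one attached to the Aeppli Laplacian of $(J_t,\text{metric}_t)$, so that Schweitzer's decomposition applies fibrewise.
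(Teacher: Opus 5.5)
Your proof is correct and follows the paper's argument: $\mathcal O_t(c)$ is the Aeppli-harmonic representative of $\mathfrak o_t(c)$, so it vanishes exactly when $\mathfrak o_t(c)=0$, and by \cref{prop:obstruction-well-defined} this is equivalent to $c\in\mathcal P_t$. Your Step~2 uses the orthogonal Aeppli decomposition to show that a harmonic form is Aeppli-exact only if it is zero; this simply spells out what the paper treats as holding ``by construction.''
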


\begin{proof}
By construction, $\mathcal O_t(c)$ is the harmonic representative of $\mathfrak o_t(c)$. Hence
\[
\mathcal O_t(c)=0\quad\Longleftrightarrow\quad\mathfrak o_t(c)=0\quad\Longleftrightarrow\quad c\in\mathcal P_t,
\]
where the last equivalence is \cref{prop:obstruction-well-defined}.
\end{proof}

\begin{theorem}\label{thm:constant-rank}
Let $X_t=(X,J_t)$ be a small deformation and let $c_0\in\mathcal B_0\cap V$. Suppose
that
\begin{enumerate}[label=\textup{(\alph*)}]
\item $h^{n-2,n}_{\Aep}(X_t)$ is locally constant, and
\item $\dim_{\R}(V\cap\mathcal P_t)$ is locally constant.
\end{enumerate}
Then $X_t$ is balanced hyperbolic for every sufficiently small $t$. More precisely, there exists a smooth path
\[
c_t\in V\cap\mathcal P_t,\qquad c_{t=0}=c_0,
\]
such that $c_t$ contains a balanced hyperbolic form on $X_t$. Equivalently, condition~\textup{(b)} may be replaced by local constancy of the real rank of $\mathcal O_t|_V$.
\end{theorem}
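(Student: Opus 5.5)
The plan has two stages. First, continue the central class $c_0$ as a de Rham class inside $V\cap\mathcal P_t$, using the abstract template. Second, realize the continued class by a strictly positive pure-type form, using parametric Aeppli--Hodge theory. After that, \cref{prop:intersection} finishes the argument.

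\textbf{Continuation of the class.} Under hypothesis (a), $\mathcal E_{\Aep}$ from \eqref{eq:aeppli-bundle} is a smooth finite-rank vector bundle. The map $\mathcal O_t$ of \eqref{eq:obstruction-map} is smooth in $t$, since $s(c)$ is fixed and both the type projection relative to $J_t$ and $\mathcal H_{\Aep,t}$ vary smoothly. By \cref{lem:kernel-pure}, $(\mathcal E_{\Aep},\mathcal O)$ is a finite-dimensional off-type datum in the sense of \cref{def:abstract-offtype} with kernel $\mathcal P_t$. Hypothesis (b), or equivalently constant rank of $\mathcal O_t|_V$ by rank--nullity, is exactly the hypothesis of \cref{thm:abstract-continuation}. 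That theorem gives a smooth path $c_t\in V\cap\mathcal P_t$ with $c_{t=0}=c_0$. This also justifies the final sentence of the statement.

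\textbf{Positive representative.} Let $\Omega_0$ be a strictly positive $d$-closed $(n-1,n-1)$-representative of $c_0$. Set
\[
\Omega_t:=\Omega_0+s(c_t)-s(c_0),
\]
which is closed, represents $c_t$, and depends smoothly on $t$ in every $C^k$ norm. Its off-type part $A_t:=(\Omega_t)^{n-2,n}_{J_t}$ tends to $0$ smoothly as $t\to0$. Moreover $[A_t]_{\Aep}=\mathfrak o_t(c_t)=0$, so $\mathcal H_{\Aep,t}A_t=0$. The parametric homotopy formula \eqref{eq:aeppli-homotopy} then gives
\[
A_t=\partial_t v_t+\dbar_t u_t,\qquad v_t:=S^{\partial}_tA_t,\quad u_t:=S^{\dbar}_tA_t.
\]
Because the operators are locally uniformly bounded $H^s\to H^{s+1}$ and $A_t\to0$ in every $H^s$, Sobolev embedding with large $s$ gives $u_t,v_t\to0$ in $C^1$.

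As in the proof of \cref{prop:obstruction-well-defined}, put
\[
\Omega_t':=\Omega_t-d\bigl(v_t+u_t+\overline{u_t}+\overline{v_t}\bigr).
\]
This form is real, $d$-closed, of pure $J_t$-type $(n-1,n-1)$, and represents $c_t$. It converges to $\Omega_0$ in $C^0$. Since strict positivity is open jointly in the form and the complex structure, $\Omega_t'$ is strictly positive for small $t$. Hence $c_t\in\mathcal B_t\cap V$, and \cref{prop:intersection}, via Michelsohn's root theorem and \cref{lem:cohomological}, shows that $X_t$ is balanced hyperbolic with a balanced hyperbolic form in $c_t$.

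\textbf{Main obstacle.} The delicate point is the uniformity of the Aeppli homotopy operators: they must depend smoothly on $t$ and be uniformly bounded with a derivative gain. This requires local constancy of $h^{n-2,n}_{\Aep}(X_t)$ in hypothesis (a), which I take from the parametric elliptic theory cited before \eqref{eq:aeppli-homotopy}. With it, smallness of $A_t$ transfers to $C^1$-smallness of $u_t,v_t$, which is the same mechanism as in \cref{crit:aeppli}. The other steps are formal.
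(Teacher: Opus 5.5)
Your proposal is correct and follows the paper's proof essentially step for step: the de Rham class is continued by applying \cref{thm:abstract-continuation} to the Aeppli datum of \cref{lem:kernel-pure}, and then $\Omega_t=\Omega_0+s(c_t-c_0)$ is corrected to $\Omega_t'=\Omega_t-d(v_t+u_t+\overline{u_t}+\overline{v_t})$ using the parametric homotopy formula \eqref{eq:aeppli-homotopy}. The only cosmetic difference is that you obtain $C^1$-smallness of $u_t,v_t$ via Sobolev embedding, whereas the paper asserts $C^\infty$ convergence directly; either is enough for the positivity step.
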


\begin{proof}
By \cref{lem:kernel-pure}, the Aeppli harmonic map \eqref{eq:obstruction-map} is a
finite-dimensional off-type datum in the sense of \cref{def:abstract-offtype}. Under
condition~\textup{(a)} its target is a smooth vector bundle, and condition~\textup{(b)}
is precisely the constant-rank hypothesis of \cref{thm:abstract-continuation}. Hence
there is a smooth path
\[
c_t\in V\cap\mathcal P_t,\qquad c_{t=0}=c_0.
\]
It remains to realize these classes by positive forms.

Retain the real-linear section $s$ used in \eqref{eq:obstruction-map}, and let $\Omega_0$ be a strictly positive, real, $d$-closed $(n-1,n-1)$-representative of $c_0$. Define
\[
\Omega_t:=\Omega_0+s(c_t-c_0),\qquad A_t:=(\Omega_t)^{n-2,n}_{J_t}.
\]
Then $\Omega_t$ is real and $d$-closed, represents $c_t$, and converges to $\Omega_0$ in $C^\infty$. Moreover,
\[
\Omega_t-s(c_t)=\Omega_0-s(c_0)
\]
is $d$-exact. Hence, by the representative-independence of the off-type Aeppli obstruction established in \cref{prop:obstruction-well-defined},
\[
\mathcal H_{\Aep,t}A_t=\mathcal O_t(c_t)=0.
\]
Since $A_0=0$, we have $A_t\to0$ in $C^\infty$. Applying the parametric homotopy formula \eqref{eq:aeppli-homotopy} gives
\[
A_t=\partial_tv_t+\dbar_tu_t,\qquad u_t,v_t\longrightarrow0\quad\text{in }C^{\infty}.
\]
Consequently,
\[
\Omega_t'=\Omega_t-d\bigl(v_t+u_t+\overline{u_t}+\overline{v_t}\bigr)
\]
is real, $d$-closed, of type $(n-1,n-1)$ for $J_t$, and represents $c_t$. Moreover, $\Omega_t'\to\Omega_0$ in $C^{\infty}$. By openness of strict positivity, $\Omega_t'$ is strictly positive for small $t$. Since $c_t\in V$, \cref{prop:intersection} shows that the Michelsohn root of $\Omega_t'$ is a balanced hyperbolic metric on $X_t$.
\end{proof}

We next record a practical surjectivity criterion and then deduce \cref{thm:aeppli-vanishing}.
\begin{corollary}\label{cor:surjective}
In the notation of \cref{thm:constant-rank}, assume condition~\textup{(a)} and suppose
that
\[
 \mathcal O_0|_V:V\longrightarrow H^{n-2,n}_{\Aep}(X_0)
\]
is surjective as a real-linear map. Then every sufficiently small $X_t$ is balanced hyperbolic.
\end{corollary}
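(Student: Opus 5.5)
The plan is to reduce \cref{cor:surjective} to \cref{thm:constant-rank} by checking condition~\textup{(b)} in its equivalent constant-rank form. Condition~\textup{(a)} is assumed, so the harmonic spaces $\mathcal H^{n-2,n}_{\Aep,t}$ form the smooth bundle $\mathcal E_{\Aep}$ of \eqref{eq:aeppli-bundle}. The map $\mathcal O_t$ of \eqref{eq:obstruction-map} is therefore a smooth real-linear bundle map from the trivial bundle $V\times\Delta$ to $\mathcal E_{\Aep}$. Its smoothness in $t$ comes from the smooth dependence of the type projection $(\cdot)^{n-2,n}_{J_t}$ and of the harmonic projection $\mathcal H_{\Aep,t}$. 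By \cref{lem:kernel-pure}, this map is a finite-dimensional off-type datum with $\ker\mathcal O_t=\mathcal P_t$.

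Next I will apply \cref{cor:abstract-surjective}. Surjectivity of $\mathcal O_0|_V$ onto the fibre $\mathcal E_{\Aep,0}\cong H^{n-2,n}_{\Aep}(X_0)$ is an open condition. Concretely, pick a subspace $W\subset V$ with $\dim W$ equal to the rank of $\mathcal E_{\Aep}$ and with $\mathcal O_0|_W$ an isomorphism. In a local frame of $\mathcal E_{\Aep}$ the determinant of $\mathcal O_t|_W$ depends continuously on $t$ and is non-zero at $t=0$, so it stays non-zero nearby. Hence $\operatorname{rank}_{\R}(\mathcal O_t|_V)=h^{n-2,n}_{\Aep}(X_0)$ is constant for small $t$. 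By rank--nullity, $\dim_{\R}(V\cap\mathcal P_t)=\dim V-h^{n-2,n}_{\Aep}(X_0)$ is also locally constant, which is condition~\textup{(b)}.

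Finally, since $X_0$ is balanced hyperbolic, \cref{prop:intersection} supplies some $c_0\in\mathcal B_0\cap V$. \Cref{thm:constant-rank} then gives a smooth path $c_t\in V\cap\mathcal P_t$ whose classes carry strictly positive closed $(n-1,n-1)$-forms. So every sufficiently small $X_t$ is balanced hyperbolic.

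No step is really hard. The only point needing care is the smoothness of $\mathcal O_t$ as a bundle map, which is needed for the openness of surjectivity. This relies on the parametric Aeppli--Hodge theory invoked before \eqref{eq:aeppli-homotopy}, which is available under condition~\textup{(a)}; I take it as given.
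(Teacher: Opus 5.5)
Your argument is correct and is essentially the paper's proof. Openness of surjectivity (the content of \cref{cor:abstract-surjective}, which you check directly via the determinant on $W$) gives locally constant rank of $\mathcal O_t|_V$, and \cref{thm:constant-rank} applied to $c_0\in\mathcal B_0\cap V$ then gives the positive realization. One harmless slip: $h^{n-2,n}_{\Aep}$ is a complex dimension, so the constant real rank is $2h^{n-2,n}_{\Aep}(X_0)$ and $\dim_{\R}(V\cap\mathcal P_t)=\dim V-2h^{n-2,n}_{\Aep}(X_0)$; only constancy is used, so the conclusion is unaffected.
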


\begin{proof}
Under condition~\textup{(a)}, \cref{cor:abstract-surjective} applies to the Aeppli
off-type datum, so $\mathcal O_t|_V$ has locally constant rank for small $t$. Apply
\cref{thm:constant-rank} for the positivity realization.
\end{proof}

\begin{proof}[Proof of \cref{thm:aeppli-vanishing}]
Assume $H^{n-2,n}_{\Aep}(X_0)=0$. By upper semicontinuity of the kernel dimension of the Aeppli Laplacian, after shrinking the base one has
\[
 H^{n-2,n}_{\Aep}(X_t)=0
\]
for every small $t$. Hence the Aeppli number is locally constant and the target of
$\mathcal O_t$ is the zero space. In particular, $\mathcal O_0|_V$ is trivially surjective, so \cref{cor:surjective} applies. The equivalent formulation $h^{2,0}_{\BC}(X_0)=0$ follows from Bott--Chern/Aeppli duality.
\end{proof}

If, in addition, the central fibre is a $\ddbar$-manifold, the criterion can be made more explicit.
\begin{corollary}\label{cor:ddbar-transverse}
Suppose that $X_0$ is a balanced hyperbolic $\ddbar$-manifold. With $\mathcal P_0$ as
above, set
\[
V^{\perp}:=\left\{\xi\in H^2_{\DR}(X,\R):\int_Xc\wedge\xi=0\text{ for all }c\in V\right\}
\]
and
\[
\mathcal T_0:=\left\{[\alpha+\bar\alpha]_{\DR}:\alpha\in A^{2,0}(X_0),\ d\alpha=0\right\}.
\]
Equivalently, $\mathcal T_0$ is the conjugation-fixed real locus of $H^{2,0}_{\BC}(X_0)\oplus H^{0,2}_{\BC}(X_0)$ inside $H^2_{\DR}(X,\C)$. Then the following are equivalent:
\begin{enumerate}[label=\textup{(\roman*)}]
\item $V+\mathcal P_0=H$;
\item $\mathcal O_0|_V:V\to H^{n-2,n}_{\Aep}(X_0)$ is surjective;
\item $V^{\perp}\cap\mathcal T_0=\{0\}$.
\end{enumerate}
Any of these conditions implies that every sufficiently small $X_t$ is balanced hyperbolic.
\end{corollary}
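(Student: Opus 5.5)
The plan is to derive the corollary from \cref{prop:abstract-transversality} and \cref{cor:surjective}, using the $\ddbar$-hypothesis for three inputs: surjectivity of the full off-type map $\mathcal O_0:H\to H^{n-2,n}_{\Aep}(X_0)$, local constancy of $h^{n-2,n}_{\Aep}(X_t)$, and the identification $(\mathcal P_0)^\perp=\mathcal T_0$ under Poincar\'e duality $H^{2n-2}\times H^2\to\R$.

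\emph{Step 1: surjectivity of $\mathcal O_0$.} On a $\ddbar$-manifold the natural maps between Bott--Chern, Dolbeault, Aeppli and de Rham cohomologies are isomorphisms, and $H^k_{\DR}(X,\C)=\bigoplus_{p+q=k}H^{p,q}_{\BC}$. Hence every Aeppli class in $H^{n-2,n}_{\Aep}(X_0)$ has a $d$-closed pure-type representative $\beta$. The real class $[\beta+\bar\beta]_{\DR}$ has $(n-2,n)$-component $\beta$, so its off-type obstruction is $[\beta]_{\Aep}$. Thus $\mathfrak o_0$, and with it $\mathcal O_0$, is surjective as a real map onto the underlying real space of the complex Aeppli group. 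We also have $\ker\mathcal O_0=\mathcal P_0$ by \cref{lem:kernel-pure}. \Cref{prop:abstract-transversality} (i)$\Leftrightarrow$(ii) then gives the equivalence of (i) and (ii) in the present corollary.

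\emph{Step 2: the dual description.} We must show that the annihilator of $\mathcal P_0$ in $H^2_{\DR}(X,\R)$ is exactly $\mathcal T_0$. By the $\ddbar$-decomposition, $\mathcal P_0\otimes\C=H^{n-1,n-1}_{\BC}$ inside $H^{2n-2}_{\DR}(X,\C)$. Wedge pairing kills $H^{p,q}\times H^{p',q'}$ unless $(p+p',q+q')=(n,n)$. Hence the annihilator of $H^{n-1,n-1}$ in $H^2$ is $H^{2,0}\oplus H^{0,2}$ (and it contains nothing of type $(1,1)$, since the pairing $H^{n-1,n-1}\times H^{1,1}\to\C$ is perfect by Serre-type duality for $\ddbar$-manifolds). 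Its real points are the classes $[\alpha+\bar\alpha]$ with $\alpha$ a $d$-closed $(2,0)$-form, since $H^{2,0}_{\BC}$ consists of $d$-closed $(2,0)$-forms. This gives $(\mathcal P_0)^\perp=\mathcal T_0$. By dimension count this already forces the annihilator to be all of $H^{2,0}\oplus H^{0,2}$ and no more. The perfect-pairing clause of \cref{prop:abstract-transversality} then gives (ii)$\Leftrightarrow$(iii).

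\emph{Step 3: the conclusion.} To invoke \cref{cor:surjective} we need hypothesis (a) of \cref{thm:constant-rank}. The $\ddbar$-property is deformation open, so nearby $X_t$ are $\ddbar$-manifolds. For these, $h^{n-2,n}_{\Aep}(X_t)=h^{2,0}_{\BC}(X_t)=h^{2,0}(X_t)$, and Hodge numbers are locally constant in such families because $\sum_{p+q=k}h^{p,q}=b_k$ together with upper semicontinuity forces constancy. With (a) in hand, (ii) lets \cref{cor:surjective} conclude.

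I expect Step 3 to be the main obstacle: it needs the openness of the $\ddbar$-property (Wu / Angella--Tomassini) and the constancy of the Aeppli number. A fallback is to use \cref{thm:constant-rank}(b) directly, since surjectivity of $\mathcal O_0|_V$ gives that $V\cap\mathcal P_t$ has constant dimension once the target rank is constant.
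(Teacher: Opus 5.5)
Your proposal is correct and follows the paper's proof essentially step for step. You use the $\ddbar$-lemma to get surjectivity of $\mathcal O_0$ and $\ker\mathcal O_0=\mathcal P_0$, which feeds \cref{prop:abstract-transversality}; you use the Hodge decomposition to identify $(\mathcal P_0)^\perp=\mathcal T_0$; and you combine openness of the $\ddbar$-property with upper semicontinuity and constant Betti numbers to secure hypothesis (a) of \cref{cor:surjective}. You also spell out details the paper leaves implicit, namely the explicit preimage $[\beta+\bar\beta]_{\DR}$ and the type-by-type computation of the annihilator.
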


\begin{proof}
Openness of the $\ddbar$ property \cite{Wu06}, together with upper semicontinuity and
constancy of the Betti numbers, gives local constancy of the relevant Aeppli number. On $X_0$, the Bott--Chern-to-Aeppli map is an isomorphism, so $\mathcal O_0:H\to H^{n-2,n}_{\Aep}(X_0)$ is surjective and $\ker\mathcal O_0=\mathcal P_0$. Thus \cref{prop:abstract-transversality} gives \textup{(i)}$\Leftrightarrow$\textup{(ii)}. Under the Hodge decomposition, $(\mathcal P_0)^\perp=\mathcal T_0$ for the Poincar\'e pairing, giving \textup{(i)}$\Leftrightarrow$\textup{(iii)}. The conclusion follows from
\cref{cor:surjective}.
\end{proof}

\begin{remark}
\Cref{thm:aeppli-vanishing} is the rank-zero extreme of \cref{thm:constant-rank}: the entire Aeppli target vanishes, so $\mathcal P_t=H$ and $V\cap\mathcal P_t=V$. At the opposite extreme, the co-polarized situation treated in the literature (see \cref{subsec:compact-literature}) already provides a fixed section $c_t\equiv c_0$ of $V\cap\mathcal P_t$. \cref{thm:constant-rank} allows the class to drift, but only inside the fixed hyperbolic subspace $V$. A rank jump of $\mathcal O_t|_V$ may destroy the smooth kernel bundle used in the proof and may lower $\dim(V\cap\mathcal P_t)$ away from the central fibre. This obstructs the constant-rank continuation method, but does not by itself give a negative answer to \cref{q:balanced-locus}: another class in $\mathcal B_t\cap V$ may still exist.
\end{remark}

\subsection{Related compact deformation theory}
\label{subsec:compact-literature}

We place the Aeppli realization in the context of existing compact deformation results. Let $(X_0,\omega_0)$ be balanced and put
\[
 c=[\omega_0^{n-1}]_{\DR}.
\]
Following Popovici, a nearby fibre is said to be co-polarized by $c$ when this fixed de Rham class is of type $(n-1,n-1)$ for the complex structure of the fibre \cite[Definition~4.1]{Pop19}. For a balanced $\ddbar$-manifold, \cite[Observation~7.2]{Pop19} shows that, along sufficiently small co-polarized deformations, the same class $c$ contains a strictly positive $d$-closed $(n-1,n-1)$-representative. If $\omega_0$ is balanced hyperbolic, then $c\in V$, so \cref{crit:fixed-class} transfers the bounded-primitive property to Popovici's nearby representative. Thus the balanced hyperbolic conclusion here follows from Popovici's result and the cohomological transfer principle of this paper. The co-polarized result is naturally interpreted as the fixed-section case $c_t\equiv c_0$ of the Aeppli realization of the moving-intersection template.

Sferruzza derived first-order necessary conditions for extending a prescribed balanced
metric \cite{Sfe22}; for the present problem, the abstract class-valued linearization is \cref{prop:abstract-first-variation}, while its Aeppli deformation term is computed in \cref{prop:first-variation}. Hu--Xia's canonical Aeppli deformations and jumping formulas \cite{HX25} provide an all-order framework for tracking the moving Aeppli spaces and their possible dimension jumps. The additional feature in \cref{thm:constant-rank} is that the obstruction map is restricted to the fixed $\widetilde d$-bounded subspace $V$, and the continued class is allowed to drift only inside $V\cap\mathcal P_t$. Xia's lower semicontinuity theorem for the balanced cone \cite{Xia25} controls the existence and positivity of nearby balanced classes, but does not by itself force their de Rham classes to lie in $V$. Likewise, Angella--Ugarte's criteria in terms of strongly Gauduchon cones and Bott--Chern/Aeppli numerical data enlarge the known balanced locus, while the intersection with the fixed hyperbolic subspace remains an additional requirement \cite{AU17}.

\section{The degree-2 Dolbeault realization}
\label{sec:kahler-program}

We next realize \cref{sec:finite-template} in degree $k=2$. The off-type datum now takes values in ordinary Dolbeault cohomology. Khelifati states that small deformations of a K\"ahler hyperbolic manifold are $L^2$-K\"ahler hyperbolic, and that ordinary K\"ahler hyperbolicity is open under the additional hypothesis $h^{2,0}(X_0)=0$ \cite[Corollary~4.20 and Theorem~4.23]{Khe25KH}. We prove the latter statement independently here as the zero-target case of the general continuation mechanism. Recent Hodge-cone theory gives explicit $\nabla^{1,1}$-flat extensions and positive representatives for nearby K\"ahler classes \cite{LS26}; the additional question here is whether the moving class can be chosen inside the fixed hyperbolic subspace $V_{\mathrm K}$.

Let $\pi:\mathcal X\to\Delta$ be a holomorphic family with K\"ahler hyperbolic central fibre $X_0$. After the Ehresmann identification, write $X_t=(X,J_t)$ and put
\[
H_{\mathrm K}:=H^2_{\DR}(X,\R),\qquad V_{\mathrm K}:=\Vhyp^2(X)\subset H_{\mathrm K}.
\]
By the Kodaira--Spencer stability theorem, after shrinking $\Delta$ every $X_t$ is K\"ahler \cite{KS60}. Denote by
\[
\mathcal K_t:=\left\{[\omega]_{\DR}\in H_{\mathrm K}:d\omega=0,\ \omega\in A^{1,1}_{J_t}(X,\R),\ \omega>0\right\}
\]
the K\"ahler cone of $X_t$, viewed in the fixed de Rham space, and set
\[
\mathcal P_t^{\mathrm K}:=\left\{c\in H_{\mathrm K}:c\text{ admits a real }d\text{-closed }(1,1)\text{-representative on }X_t\right\}.
\]

\begin{proposition}\label{prop:kahler-intersection}
For every sufficiently small $t$,
\[
X_t\text{ is K\"ahler hyperbolic}\quad\Longleftrightarrow\quad\mathcal K_t\cap V_{\mathrm K}\neq\varnothing.
\]
\end{proposition}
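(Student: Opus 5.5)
The argument is the exact analogue of \cref{prop:intersection}, with Michelsohn's theorem replaced by the tautology that a K\"ahler form is itself the positive closed $(1,1)$-form. The only input needed is the transfer principle \cref{lem:cohomological}. The restriction to sufficiently small $t$ plays no role in the equivalence itself; it only ensures, via the Kodaira--Spencer stability theorem, that each $X_t$ is K\"ahler, so that $\mathcal K_t$ is the genuine K\"ahler cone.

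For the forward implication, suppose $X_t$ is K\"ahler hyperbolic. Then there is a K\"ahler form $\omega$ on $X_t$, that is, a real, $d$-closed, positive $J_t$-$(1,1)$-form, and a bounded $1$-form $\Gamma$ on $\widetilde X$ with $p^{*}\omega=d\Gamma$. By definition of $\mathcal K_t$, the class $[\omega]_{\DR}$ lies in $\mathcal K_t$. By \cref{def:dtilde}, $\omega$ is a $\widetilde d$-bounded representative of its class, so $[\omega]_{\DR}\in V_{\mathrm K}$. Here I note that $\widetilde d$-boundedness is independent of the base metric, and that the universal cover is fixed as a smooth covering after the Ehresmann identification. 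Hence $\mathcal K_t\cap V_{\mathrm K}\neq\varnothing$.

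For the converse, take $c\in\mathcal K_t\cap V_{\mathrm K}$. Since $c\in\mathcal K_t$, there is a K\"ahler form $\omega$ on $X_t$ with $[\omega]_{\DR}=c$. Since $c\in V_{\mathrm K}$, some closed representative $\alpha$ of $c$ is $\widetilde d$-bounded. \Cref{lem:cohomological}, applied to $\alpha$ and $\omega$, shows that $\omega$ is $\widetilde d$-bounded as well, so $\omega$ is a K\"ahler hyperbolic metric on $X_t$.

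There is no genuine obstacle here. The only points to check are bookkeeping: boundedness must be measured with a metric lifted from $X$, and any two such metrics are uniformly equivalent, so it does not matter whether one uses a $J_t$-Hermitian metric or a fixed background metric. In particular, the choice of $\omega$ itself as the reference metric is harmless.
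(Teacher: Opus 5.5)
Your proof is correct and is essentially the paper's argument: a K\"ahler form on $X_t$ is K\"ahler hyperbolic exactly when its de Rham class lies in $V_{\mathrm K}$, and \cref{lem:cohomological} makes this depend only on the class, which gives both implications. One small quibble is that the smallness of $t$ is needed for the Ehresmann identification $X_t=(X,J_t)$ rather than for K\"ahler stability, since for a non-K\"ahler fibre both sides of the equivalence would simply fail.
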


\begin{proof}
A K\"ahler form $\omega$ is K\"ahler hyperbolic precisely when its de Rham class lies in $V_{\mathrm K}$. By \cref{lem:cohomological}, this depends only on the class and not on the chosen representative.
\end{proof}

\subsection{The Dolbeault off-type realization}

Fix $c\in H_{\mathrm K}$ and a real $d$-closed representative $\Omega$. Relative to $J_t$, decompose
\[
\Omega=A_t+B_t+\overline{A_t},\qquad A_t\in A^{0,2}_{J_t}(X),\quad B_t\in A^{1,1}_{J_t}(X,\R).
\]
The $(0,3)$-component of $d\Omega=0$ gives $\dbar_tA_t=0$. Define
\[
o_t^{\mathrm K}(c):=[A_t]_{\dbar}\in H^{0,2}_{\dbar}(X_t).
\]

\begin{proposition}\label{prop:kahler-offtype}
The class $o_t^{\mathrm K}(c)$ is independent of the chosen real $d$-closed representative of $c$. Moreover,
\[
o_t^{\mathrm K}(c)=0\quad\Longleftrightarrow\quad c\in\mathcal P_t^{\mathrm K}.
\]
\end{proposition}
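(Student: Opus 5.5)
The argument mirrors \cref{prop:obstruction-well-defined}, with Aeppli cohomology replaced by Dolbeault cohomology, and runs in three steps.

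\emph{Well-definedness.} Let $\Omega'=\Omega+d\eta$ with $\eta$ a real $1$-form, and decompose $\eta=\eta^{1,0}+\eta^{0,1}$ relative to $J_t$. The $(0,2)$-component of $d\eta$ is
\[
(d\eta)^{0,2}=\dbar_t\eta^{0,1}+(\text{the }(0,2)\text{-part of }d\eta^{1,0}).
\]
For an integrable structure, $d$ maps $A^{1,0}$ into $A^{2,0}\oplus A^{1,1}$, so the second term vanishes. Hence $A_t$ changes by the $\dbar_t$-exact form $\dbar_t\eta^{0,1}$, and $[A_t]_{\dbar}$ is unchanged. The fact that $\dbar_tA_t=0$ was already observed from the $(0,3)$-component of $d\Omega=0$.

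\emph{If $o_t^{\mathrm K}(c)=0$, then $c\in\mathcal P_t^{\mathrm K}$.} Write $A_t=\dbar_tu_t$ with $u_t\in A^{0,1}_{J_t}(X)$, put $\eta_t:=u_t+\overline{u_t}$, which is real, and set $\Omega_t':=\Omega-d\eta_t$. By the same bidegree count:
\begin{itemize}
\item the $(0,2)$-component of $d\eta_t$ is $\dbar_tu_t=A_t$;
\item by conjugation, its $(2,0)$-component is $\overline{A_t}$;
\item $d\overline{u_t}$ has no $(0,2)$-part and $du_t$ has no $(2,0)$-part.
\end{itemize}
Therefore $\Omega_t'$ is real, $d$-closed, cohomologous to $\Omega$, and of pure type $(1,1)$ for $J_t$.

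\emph{If $c\in\mathcal P_t^{\mathrm K}$, then $o_t^{\mathrm K}(c)=0$.} Suppose $\Omega_t'$ is a real $d$-closed $(1,1)$-representative of $c$. Its $(0,2)$-component is zero, so by well-definedness $o_t^{\mathrm K}(c)=[0]_{\dbar}=0$. Alternatively, write $\Omega-\Omega_t'=d\eta$ and take $(0,2)$-components to get $A_t=\dbar_t\eta^{0,1}$.

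There is no real obstacle here. The only point needing care is the bidegree claim, namely that $d$ of a $(1,0)$-form has no $(0,2)$-component. This uses the integrability of $J_t$, which holds because each $X_t$ is a genuine complex manifold.
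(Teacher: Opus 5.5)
Your proposal is correct and follows essentially the same route as the paper's proof. Both arguments use three steps. First, the $(0,2)$-component of $d\eta$ is $\dbar_t\eta^{0,1}$, which gives well-definedness. Second, for the forward direction, correct $\Omega$ by $d(u_t+\overline{u_t})$, where $A_t=\dbar_tu_t$. Third, for the converse, take the $(0,2)$-component of $\Omega-\Omega_t'=d\eta$.
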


\begin{proof}
If $\Omega'=\Omega+d\eta$, then
\[
(\Omega'-\Omega)^{0,2}_{J_t}=\dbar_t\eta^{0,1}_{J_t},
\]
so the Dolbeault class is representative-independent. If $o_t^{\mathrm K}(c)=0$, write
$A_t=\dbar_tu_t$ for some $u_t\in A^{0,1}_{J_t}(X)$ and put
\[
\Omega_t'=\Omega-d(u_t+\overline{u_t}).
\]
Its $(0,2)$-component is $A_t-\dbar_tu_t=0$, and its $(2,0)$-component vanishes by
conjugation. Thus $\Omega_t'$ is a real $d$-closed $(1,1)$-representative of $c$. Conversely, comparing $\Omega$ with any such representative shows that $A_t$ is $\dbar_t$-exact.
\end{proof}

Since the nearby fibres are K\"ahler, the Hodge decomposition, constancy of the Betti
numbers, and upper semicontinuity imply local constancy of $h^{0,2}(X_t)$. Choose a
smooth family of K\"ahler metrics. The $\dbar$-harmonic spaces form a smooth real vector
bundle
\[
\mathcal E^{0,2}_{\dbar}:=\bigsqcup_{t\in\Delta}\mathcal H^{0,2}_{\dbar,t}\longrightarrow\Delta,
\]
identified fibrewise with $H^{0,2}_{\dbar}(X_t)$. Fix a Riemannian metric on $X$ and let
\[
s:H_{\mathrm K}\longrightarrow Z^2(X,\R)
\]
assign to each de Rham class its unique harmonic representative. Thus $s$ is real-linear and is a right inverse of the natural quotient map
\[
q:Z^2(X,\R)\longrightarrow H_{\mathrm K},\qquad \Omega\longmapsto[\Omega]_{\DR}.
\]
Define
\[
\mathcal O_t^{\mathrm K}:H_{\mathrm K}\longrightarrow\mathcal E^{0,2}_{\dbar,t},\qquad
\mathcal O_t^{\mathrm K}(c):=\mathcal H_{\dbar,t}\bigl((s(c))^{0,2}_{J_t}\bigr).
\]
Thus $\mathcal O_t^{\mathrm K}(c)$ is the harmonic representative of $o_t^{\mathrm K}(c)$, and
\begin{equation}\label{eq:kahler-kernel}
\ker\mathcal O_t^{\mathrm K}=\mathcal P_t^{\mathrm K}.
\end{equation}

\subsection{First variation: the Dolbeault deformation term}

Let $\varphi(t)=t\kappa+O(t^2)$ be a Beltrami differential for a real one-parameter path
$J_t$. The linearized Maurer--Cartan equation gives $\dbar\kappa=0$. We use the same convention as in \cref{prop:first-variation}: in local $J_0$-holomorphic coordinates, the $(1,0)$-cotangent bundle of $J_t$ is spanned to first order by
\[
dz^i+\varphi^i_{\bar j}(t)\,d\bar z^j.
\]
For a real $(1,1)$-form $\omega_0$, let $\iota_\kappa\omega_0$ denote the $(0,2)$-form
obtained by contracting its holomorphic index with $\kappa$ and alternating the two
antiholomorphic indices. Since $d\omega_0=0$, the standard contraction identity gives
$\dbar(\iota_\kappa\omega_0)=0$, so $[\iota_\kappa\omega_0]_{\dbar}$ is well-defined.

\begin{proposition}\label{prop:kahler-first-variation}
Let $c_t\in H_{\mathrm K}$ be a differentiable path and suppose that $c_0$ admits a real
$d$-closed $(1,1)$-representative $\omega_0$. The harmonic representative of $o_t^{\mathrm K}(c_t)$ is a differentiable section of $\mathcal E^{0,2}_{\dbar}$
vanishing at $t=0$, and
\begin{equation}\label{eq:kahler-first-variation}
\left.\nabla_{\partial/\partial t}o_t^{\mathrm K}(c_t)\right|_{t=0}=o_0^{\mathrm K}(\dot c_0)-[\iota_\kappa\omega_0]_{\dbar}\in H^{0,2}_{\dbar}(X_0).
\end{equation}
For a fixed class $c_t\equiv c_0$, the linearized pure-type obstruction vanishes if and
only if $[\iota_\kappa\omega_0]_{\dbar}=0$. If moreover $c_0\in V_{\mathrm K}$ and the
class is required to remain in $V_{\mathrm K}\cap\mathcal P_t^{\mathrm K}$ to first order, admissible first derivatives exist if and only if
\[
[\iota_\kappa\omega_0]_{\dbar}\in o_0^{\mathrm K}(V_{\mathrm K}).
\]
\end{proposition}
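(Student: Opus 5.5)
I will follow the proof of \cref{prop:first-variation} closely, replacing the Aeppli data by the Dolbeault data of the present section.

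\emph{Choice of representatives and smoothness.} Pick real $d$-closed representatives
\[
\Omega_t=\omega_0+s(c_t-c_0)=\omega_0+t\Theta+O(t^2),\qquad \Theta:=s(\dot c_0)+O(t),
\]
so that $[\Theta]_{\DR}=\dot c_0$ and $\Omega_t$ depends differentiably on $t$. Set $A_t:=(\Omega_t)^{0,2}_{J_t}$. By \cref{prop:kahler-offtype}, $\mathcal H_{\dbar,t}A_t=\mathcal O^{\mathrm K}_t(c_t)$ is the harmonic representative of $o_t^{\mathrm K}(c_t)$. The nearby fibres are K\"ahler, so $h^{0,2}(X_t)$ is locally constant. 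Parametric Hodge theory (Kodaira--Spencer) then makes $\mathcal H_{\dbar,t}$ depend smoothly on $t$, and the section $t\mapsto\mathcal H_{\dbar,t}A_t$ of $\mathcal E^{0,2}_{\dbar}$ is differentiable. It vanishes at $t=0$ because $A_0=\omega_0^{0,2}=0$. Since the section vanishes at $0$, its covariant derivative there is connection-independent and equals $\mathcal H_{\dbar,0}\dot A_0$; the term involving $\dot{\mathcal H}$ applied to $A_0=0$ drops out.

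\emph{Moving-type linearization (main step).} The key computation is
\[
\dot A_0=\Theta^{0,2}_{J_0}-\iota_\kappa\omega_0 .
\]
To obtain it, I would write the $J_t$ $(1,0)$-coframe as $\theta^i_t=dz^i+t\kappa^i_{\bar j}d\bar z^j+O(t^2)$ and the $(0,1)$-coframe as $\overline{\theta^i_t}$. I would then expand the $(0,2)$-component of $\omega_0=\tfrac i2 g_{i\bar k}dz^i\wedge d\bar z^k$ in this coframe. Substituting $dz^i=\theta^i_t-t\kappa^i_{\bar j}d\bar z^j$ produces, to first order, the $(0,2)$ part $-t\,g_{i\bar k}\kappa^i_{\bar j}\,d\bar z^j\wedge d\bar z^k$ (with the factor $\tfrac i2$), which is $-t\,\iota_\kappa\omega_0$ with the contraction convention fixed above. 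The correction from $d\bar z^k$ only alters the $(1,1)$ and $(2,0)$ parts at first order. The term $t\Theta$ contributes $t\Theta^{0,2}_{J_0}+O(t^2)$. This is the step I expect to need the most care, namely the sign and the normalization of the contraction. As the paper remarks, the sign convention does not affect the vanishing and range statements.

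\emph{Conclusion.} Applying $\mathcal H_{\dbar,0}$ and passing to Dolbeault classes gives $[\Theta^{0,2}_{J_0}]_{\dbar}=o_0^{\mathrm K}(\dot c_0)$, again by \cref{prop:kahler-offtype}. This yields \eqref{eq:kahler-first-variation}, using $\dbar(\iota_\kappa\omega_0)=0$. For $c_t\equiv c_0$ we have $\dot c_0=0$, which gives the fixed-class statement. The range statement is \cref{prop:abstract-first-variation} applied to the off-type datum $\mathcal O^{\mathrm K}_t$, whose kernel is $\mathcal P^{\mathrm K}_t$ by \eqref{eq:kahler-kernel}. Here $\dot c_0$ must lie in $V_{\mathrm K}$, so solvability of $o_0^{\mathrm K}(\dot c_0)=[\iota_\kappa\omega_0]_{\dbar}$ is exactly membership in $o_0^{\mathrm K}(V_{\mathrm K})$.
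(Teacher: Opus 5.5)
Your proposal is correct and follows the paper's proof essentially step for step. The steps are: a differentiable family of real $d$-closed representatives $\Omega_t$ (your choice $\omega_0+s(c_t-c_0)$ is a valid instance); the moving-type linearization $\dot A_0=\Theta^{0,2}_{J_0}-\iota_\kappa\omega_0$; the observation that $A_0=0$ kills the derivative of $\mathcal H_{\dbar,t}$; passage to Dolbeault classes; and finally specialization to $\dot c_0=0$ and to $\dot c_0\in V_{\mathrm K}$. Your explicit coframe computation adds detail the paper only asserts, and two slips are harmless: at first order the $d\bar z^k$ correction affects only the $(2,0)$-part, and $\Theta$ should simply be $s(\dot c_0)$ with no $O(t)$ term.
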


\begin{proof}
Choose a differentiable family of real $d$-closed representatives
\[
\Omega_t=\omega_0+t\Theta+O(t^2),\qquad [\Theta]_{\DR}=\dot c_0,
\]
and put $A_t=(\Omega_t)^{0,2}_{J_t}$. The moving-type calculation gives
\[
\dot A_0=\Theta^{0,2}_{J_0}-\iota_\kappa\omega_0.
\]
The harmonic representative of $o_t^{\mathrm K}(c_t)$ is $\mathcal H_{\dbar,t}A_t$. Since $A_0=0$,
\[
\left.\frac{d}{dt}\right|_{t=0}\mathcal H_{\dbar,t}A_t=\mathcal H_{\dbar,0}\dot A_0.
\]
Taking the Dolbeault class proves \eqref{eq:kahler-first-variation}. The fixed-class
assertion follows by setting $\dot c_0=0$. If $c_t$ is taken in $V_{\mathrm K}$ and the moving pure-type constraint is imposed to first order, then $\dot c_0\in V_{\mathrm K}$ and the derivative in \eqref{eq:kahler-first-variation} must vanish. Such cancellation is possible exactly when the contraction class belongs to $o_0^{\mathrm K}(V_{\mathrm K})$.
\end{proof}

Thus, for a deformation direction represented by $\kappa$, the degree-two realization of
\eqref{eq:abstract-theta} is
\[
\Theta_{\partial_t}(c_0)=[\iota_\kappa\omega_0]_{\dbar},
\]
and the condition $[\iota_\kappa\omega_0]_{\dbar}\in o_0^{\mathrm K}(V_{\mathrm K})$ is precisely the abstract range condition \eqref{eq:abstract-range}.

\begin{corollary}\label{cor:kahler-first-order}
Let $c_0\in V_{\mathrm K}\cap\mathcal P_0^{\mathrm K}$ and let $\omega_0$ be a real $d$-closed $(1,1)$-representative. If
\[
[\iota_\kappa\omega_0]_{\dbar}\notin o_0^{\mathrm K}(V_{\mathrm K}),
\]
then there is no differentiable path $c_t\in V_{\mathrm K}$ with $c_{t=0}=c_0$ and
$o_t^{\mathrm K}(c_t)=0$ for all small $t$.
\end{corollary}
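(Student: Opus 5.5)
We argue by contradiction, following the pattern of \cref{cor:first-order-no-go} in the Aeppli setting. Suppose there were a differentiable path $c_t\in V_{\mathrm K}$ with $c_{t=0}=c_0$ and $o_t^{\mathrm K}(c_t)=0$ for all small $t$. The plan is to differentiate this identity at $t=0$ using \cref{prop:kahler-first-variation} and observe that the resulting linear equation has no solution with velocity in $V_{\mathrm K}$.

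First, we need $\nabla_{\partial/\partial t}o_t^{\mathrm K}(c_t)$ to make sense as a derivative of a section of the smooth bundle $\mathcal E^{0,2}_{\dbar}$. This bundle exists because $h^{0,2}(X_t)$ is locally constant: the nearby fibres are K\"ahler by Kodaira--Spencer, and the Hodge numbers are then constant. By \cref{prop:kahler-first-variation}, the harmonic representative $\mathcal H_{\dbar,t}\bigl((\Omega_t)^{0,2}_{J_t}\bigr)$ is a differentiable section vanishing at $t=0$. Here $\Omega_t$ is any differentiable family of real closed representatives of $c_t$, for instance $\omega_0+s(c_t-c_0)$, exactly as in the proof of \cref{thm:constant-rank}. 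Since the section vanishes at $t=0$, its covariant derivative there does not depend on the connection.

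By hypothesis the section is identically zero, so its derivative at $t=0$ is zero. Formula \eqref{eq:kahler-first-variation} then gives
\[
0=o_0^{\mathrm K}(\dot c_0)-[\iota_\kappa\omega_0]_{\dbar}.
\]
Because $c_t$ lies in the fixed linear subspace $V_{\mathrm K}$ for all $t$, we have $\dot c_0\in V_{\mathrm K}$. Hence $[\iota_\kappa\omega_0]_{\dbar}=o_0^{\mathrm K}(\dot c_0)$ lies in $o_0^{\mathrm K}(V_{\mathrm K})$, contradicting the hypothesis.

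The only point needing care is that the obstruction class is independent of the choice of the representative $\omega_0$. This holds because two real closed $(1,1)$-representatives differ by $d\eta$, and $\iota_\kappa(d\eta)$ differs from a $\dbar$-exact form by terms that are killed at first order. More cleanly, this independence is already built into \eqref{eq:kahler-first-variation}, whose left side depends only on the path $c_t$. I expect no real obstacle; the step most worth checking is the differentiability of the harmonic section, which \cref{prop:kahler-first-variation} supplies.
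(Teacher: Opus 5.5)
Your argument is correct and is the paper's proof: a path $c_t\in V_{\mathrm K}$ with $o_t^{\mathrm K}(c_t)\equiv0$ makes the left side of \eqref{eq:kahler-first-variation} vanish. This gives $[\iota_\kappa\omega_0]_{\dbar}=o_0^{\mathrm K}(\dot c_0)$ with $\dot c_0\in V_{\mathrm K}$, contradicting the hypothesis. Your final paragraph on independence of the representative is not needed, because the corollary fixes $\omega_0$ and \cref{prop:kahler-first-variation} is applied with that same $\omega_0$; the phrase about terms ``killed at first order'' would not stand as an argument on its own in any case.
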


\begin{proof}
Any such path would make the left-hand side of \eqref{eq:kahler-first-variation} vanish,
contradicting the stated non-membership.
\end{proof}

\subsection{Continuation, positivity, and transversality}\label{subsec:kahler-continuation}

\begin{theorem}[=\cref{thm:kahler-main}]\label{thm:kahler-constant-rank}
Let $X_t=(X,J_t)$ be a small deformation of a compact K\"ahler hyperbolic manifold and
let $c_0\in\mathcal K_0\cap V_{\mathrm K}$. Suppose that $\dim_{\R}\bigl(V_{\mathrm K}\cap\mathcal P_t^{\mathrm K}\bigr)$ is locally constant. Equivalently, suppose that the real rank of $\mathcal O_t^{\mathrm K}|_{V_{\mathrm K}}$ is locally constant. Then every sufficiently small $X_t$ is K\"ahler hyperbolic. More precisely, there are smooth families
\[
c_t\in V_{\mathrm K}\cap\mathcal P_t^{\mathrm K},\qquad\omega_t\in A^{1,1}_{J_t}(X,\R),
\]
with $c_{t=0}=c_0$, $[\omega_t]_{\DR}=c_t$, $d\omega_t=0$, $\omega_t>0$, and $\omega_t\to\omega_0$ in $C^\infty$ for a K\"ahler hyperbolic representative $\omega_0$ of $c_0$.
\end{theorem}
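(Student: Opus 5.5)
The argument copies the proof of \cref{thm:constant-rank}, with Aeppli--Hodge theory replaced by Dolbeault--Hodge theory in bidegree $(0,2)$.

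\textbf{Step 1: continuation of the class.} By Kodaira--Spencer stability, all $X_t$ are K\"ahler after shrinking $\Delta$. Hence $h^{0,2}(X_t)$ is locally constant, so $\mathcal E^{0,2}_{\dbar}$ is a smooth bundle and $\mathcal O_t^{\mathrm K}$ is a smooth bundle map. By \eqref{eq:kahler-kernel} it is a finite-dimensional off-type datum in the sense of \cref{def:abstract-offtype}. The hypothesis on $\dim_{\R}(V_{\mathrm K}\cap\mathcal P_t^{\mathrm K})$ is the constant-rank hypothesis of \cref{thm:abstract-continuation}, by rank--nullity on the fixed space $V_{\mathrm K}$. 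That theorem therefore gives a smooth path $c_t\in V_{\mathrm K}\cap\mathcal P_t^{\mathrm K}$ with $c_{t=0}=c_0$.

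\textbf{Step 2: a converging closed representative.} Fix a K\"ahler hyperbolic form $\omega_0$ in $c_0$ and the harmonic section $s$. Set
\[
\Omega_t:=\omega_0+s(c_t-c_0),\qquad A_t:=(\Omega_t)^{0,2}_{J_t}.
\]
Then $\Omega_t$ is real, $d$-closed, represents $c_t$, and $\Omega_t\to\omega_0$ in $C^\infty$. Since $\Omega_t-s(c_t)$ is exact, \cref{prop:kahler-offtype} gives $\mathcal H_{\dbar,t}A_t=\mathcal O_t^{\mathrm K}(c_t)=0$. Because $A_0=0$ and $J_t$ varies smoothly, $A_t\to0$ in $C^\infty$.

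\textbf{Step 3: the $\dbar$-Green correction.} Using the smooth family of K\"ahler metrics, put $u_t:=\dbar_t^{*}G_{\dbar,t}A_t\in A^{0,1}_{J_t}$. Since $\dbar_tA_t=0$ and $\mathcal H_{\dbar,t}A_t=0$, we get $\dbar_tu_t=A_t$. Set
\[
\omega_t:=\Omega_t-d(u_t+\overline{u_t}).
\]
As in \cref{prop:kahler-offtype}, $\omega_t$ is real, $d$-closed, of type $(1,1)$ for $J_t$, and represents $c_t$.

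\textbf{Main obstacle.} The key point is the estimate $u_t\to0$ in $C^\infty$. It requires the Green operators $G_{\dbar,t}$ to be uniformly bounded in Sobolev norms on compact parameter sets. This is the Kodaira--Spencer parametric elliptic theory, and it holds precisely because $\dim\ker\Delta_{\dbar,t}$ is locally constant. Without constancy, $G_{\dbar,t}$ could blow up along eigenvalues tending to zero. I would invoke \cite{KS60} exactly as the Aeppli homotopy operators were used in \eqref{eq:aeppli-homotopy}; Sobolev embedding then gives $C^\infty$ convergence.

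\textbf{Step 4: positivity and hyperbolicity.} From Step 3, $\omega_t\to\omega_0$ in $C^\infty$. Positivity of a $(1,1)$-form is open jointly in the form and the complex structure, so $\omega_t>0$ for small $t$. Thus $c_t\in\mathcal K_t\cap V_{\mathrm K}$, and \cref{prop:kahler-intersection}, via \cref{lem:cohomological}, shows that $\omega_t$ is K\"ahler hyperbolic on $X_t$.
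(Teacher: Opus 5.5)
Your proposal is correct and follows the paper's own proof step for step. You continue the class via \cref{thm:abstract-continuation} applied to the Dolbeault off-type datum, take the representative $\Omega_t=\omega_0+s(c_t-c_0)$ with zero harmonic $(0,2)$-part, correct it by $u_t=\dbar_t^{*}G_{\dbar,t}A_t$, and conclude by openness of positivity and \cref{prop:kahler-intersection}. Your remark that $u_t\to0$ rests on local constancy of $h^{0,2}(X_t)$ is precisely the input the paper invokes as ``parametric elliptic theory''.
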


\begin{proof}
By \eqref{eq:kahler-kernel}, the harmonic map $\mathcal O_t^{\mathrm K}$ is an off-type datum in the sense of \cref{def:abstract-offtype}. The assumed local constancy is exactly the hypothesis of \cref{thm:abstract-continuation}, so there is a smooth path
\[
c_t\in V_{\mathrm K}\cap\mathcal P_t^{\mathrm K},\qquad c_{t=0}=c_0.
\]
It remains to realize these continued classes by K\"ahler forms.

Fix a K\"ahler hyperbolic form $\omega_0$ representing $c_0$ and define
\[
\Omega_t:=\omega_0+s(c_t-c_0),\qquad A_t:=(\Omega_t)^{0,2}_{J_t}.
\]
Then $\Omega_t$ is real and $d$-closed, represents $c_t$, and converges to $\omega_0$ in $C^\infty$. Moreover, $\Omega_t-s(c_t)=\omega_0-s(c_0)$ is $d$-exact, so representative-independence gives
\[
\mathcal H_{\dbar,t}A_t=\mathcal O_t^{\mathrm K}(c_t)=0.
\]
Since $A_0=0$, one has $A_t\to0$ in $C^\infty$. Let $G_{\dbar,t}$ be the Green operator of the $\dbar_t$-Laplacian. Because $A_t$ is $\dbar_t$-closed and has zero harmonic part,
\[
A_t=\dbar_t\dbar_t^*G_{\dbar,t}A_t.
\]
Set
\[
u_t:=\dbar_t^*G_{\dbar,t}A_t.
\]
Parametric elliptic theory gives $u_t\to0$ in $C^\infty$. Hence
\[
\omega_t:=\Omega_t-d(u_t+\overline{u_t})
\]
is real, $d$-closed, of type $(1,1)$ for $J_t$, and represents $c_t$. It converges to
$\omega_0$ in $C^\infty$, so it is positive for small $t$. Finally, $c_t\in V_{\mathrm K}$, and \cref{prop:kahler-intersection} proves that $\omega_t$ is K\"ahler hyperbolic.
\end{proof}

\begin{corollary}\label{cor:kahler-surjective}
In the setting of \cref{thm:kahler-constant-rank}, suppose that
\[
\mathcal O_0^{\mathrm K}|_{V_{\mathrm K}}:V_{\mathrm K}\longrightarrow H^{0,2}_{\dbar}(X_0)
\]
is surjective as a real-linear map. Then every sufficiently small $X_t$ is K\"ahler
hyperbolic.
\end{corollary}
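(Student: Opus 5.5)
The argument parallels \cref{cor:surjective} in degree two. We first check that the off-type datum has locally constant target rank, then use the surjectivity criterion to obtain constant rank, and finally invoke the positivity realization already proved in \cref{thm:kahler-constant-rank}.

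The target bundle needs no extra hypothesis in the K\"ahler setting. By Kodaira--Spencer stability the nearby fibres are K\"ahler. The Hodge decomposition, constancy of the Betti numbers and upper semicontinuity of $h^{p,q}$ then force $h^{0,2}(X_t)$ to be locally constant. Hence $\mathcal E^{0,2}_{\dbar}$ is a smooth finite-rank real vector bundle and $t\mapsto\mathcal O_t^{\mathrm K}$ is a smooth real-linear bundle map. By \eqref{eq:kahler-kernel} this map is a finite-dimensional off-type datum in the sense of \cref{def:abstract-offtype}. The only point to verify is smoothness in $t$, which follows from parametric elliptic theory for the $\dbar_t$-Laplacians once the kernel dimension is constant. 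This is the step I expect to need the most care, but it was already used in setting up $\mathcal O_t^{\mathrm K}$.

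Next I apply \cref{cor:abstract-surjective}. Surjectivity of $\mathcal O_0^{\mathrm K}|_{V_{\mathrm K}}$ is an open condition for a smooth family of linear maps from the fixed space $V_{\mathrm K}$ into a bundle of fixed rank. Concretely, a nonvanishing maximal minor stays nonvanishing in a local trivialization. So $\mathcal O_t^{\mathrm K}|_{V_{\mathrm K}}$ is surjective for small $t$, and its rank equals the constant $2h^{0,2}(X_0)$. By rank--nullity, $\dim_{\R}(V_{\mathrm K}\cap\mathcal P_t^{\mathrm K})=\dim V_{\mathrm K}-2h^{0,2}(X_0)$ is locally constant.

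Finally, $X_0$ is K\"ahler hyperbolic, so by \cref{prop:kahler-intersection} there is a class $c_0\in\mathcal K_0\cap V_{\mathrm K}$. All hypotheses of \cref{thm:kahler-constant-rank} now hold. That theorem gives a smooth path $c_t\in V_{\mathrm K}\cap\mathcal P_t^{\mathrm K}$ and K\"ahler representatives $\omega_t\to\omega_0$ obtained from the $\dbar_t$-Green operator correction. Since $c_t\in V_{\mathrm K}$, each $\omega_t$ is K\"ahler hyperbolic, and therefore every sufficiently small $X_t$ is K\"ahler hyperbolic.
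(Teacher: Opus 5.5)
Your proposal is correct and follows the paper's proof. The paper likewise applies \cref{cor:abstract-surjective} to the degree-two off-type datum to get local constancy of the rank of $\mathcal O_t^{\mathrm K}|_{V_{\mathrm K}}$, then invokes \cref{thm:kahler-constant-rank} for the positive realization; your check that $h^{0,2}(X_t)$ is locally constant repeats setup the paper already did when it defined $\mathcal O_t^{\mathrm K}$.
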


\begin{proof}
Apply \cref{cor:abstract-surjective} to the degree-two off-type datum. It gives local
constancy of the rank of $\mathcal O_t^{\mathrm K}|_{V_{\mathrm K}}$, and then
\cref{thm:kahler-constant-rank} supplies the positive realization.
\end{proof}

\begin{corollary}\label{cor:kahler-h20}
If $X_0$ is K\"ahler hyperbolic and $h^{2,0}(X_0)=0$, then every sufficiently small
$X_t$ is K\"ahler hyperbolic.
\end{corollary}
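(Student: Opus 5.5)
The plan is to reduce \cref{cor:kahler-h20} to the rank-zero case of the Dolbeault realization, in the same way that \cref{thm:aeppli-vanishing} was deduced from \cref{cor:surjective}. There are three steps.

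\textbf{Step 1: the target vanishes on all nearby fibres.} Since $X_0$ is K\"ahler, Hodge symmetry gives $h^{0,2}(X_0)=h^{2,0}(X_0)=0$. After shrinking $\Delta$, the Kodaira--Spencer stability theorem makes every $X_t$ K\"ahler. Upper semicontinuity of $t\mapsto h^{0,2}(X_t)$, applied to the kernel of a smoothly varying family of $\dbar_t$-Laplacians, gives $h^{0,2}(X_t)\le h^{0,2}(X_0)=0$ for small $t$. Alternatively, this follows from the local constancy of $h^{0,2}(X_t)$ already recorded before the definition of $\mathcal E^{0,2}_{\dbar}$. Either way, $\mathcal E^{0,2}_{\dbar}$ is the rank-zero bundle.

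\textbf{Step 2: the hypotheses of the continuation theorem hold trivially.} The map $\mathcal O_t^{\mathrm K}$ takes values in the zero space, so $\mathcal O_t^{\mathrm K}\equiv0$. By \eqref{eq:kahler-kernel}, $\mathcal P_t^{\mathrm K}=H_{\mathrm K}$. Hence $V_{\mathrm K}\cap\mathcal P_t^{\mathrm K}=V_{\mathrm K}$ has constant dimension; this is \cref{cor:abstract-zero-target}. Equivalently, $\mathcal O_0^{\mathrm K}|_{V_{\mathrm K}}$ is trivially surjective, so \cref{cor:kahler-surjective} applies directly.

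\textbf{Step 3: positivity.} Pick a K\"ahler hyperbolic form $\omega_0$ on $X_0$, so that $c_0=[\omega_0]\in\mathcal K_0\cap V_{\mathrm K}$. Apply \cref{thm:kahler-constant-rank}. In fact no class drift is needed: one can take $c_t\equiv c_0$. The $(0,2)$-part $A_t$ of $\omega_0$ relative to $J_t$ is $\dbar_t$-closed and tends to $0$, and it has no harmonic part because the harmonic space is zero. So $u_t=\dbar_t^*G_{\dbar,t}A_t\to0$ in $C^\infty$, and $\omega_t=\omega_0-d(u_t+\overline{u_t})$ is a K\"ahler form in the fixed class $c_0\in V_{\mathrm K}$. \cref{prop:kahler-intersection} then concludes.

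The only point needing care is the uniform smallness of $u_t$, which requires the Green operators $G_{\dbar,t}$ to vary smoothly in $t$. This holds because the dimension of the harmonic space is constant (namely zero), so Kodaira--Spencer parametric elliptic theory applies. This input is already used in the proof of \cref{thm:kahler-constant-rank}, so there is no genuine obstacle.
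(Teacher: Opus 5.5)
Your proposal is correct and follows the paper's own proof. Hodge symmetry, together with local constancy (or simply upper semicontinuity) of $h^{0,2}$, makes the obstruction bundle zero, so the rank-zero case of \cref{cor:kahler-surjective} and \cref{thm:kahler-constant-rank} applies; your remark that the class can be kept fixed, $c_t\equiv c_0$, is a valid unpacking of the positivity step in \cref{thm:kahler-constant-rank}.
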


\begin{proof}
By Hodge symmetry, $H^{0,2}_{\dbar}(X_0)=0$. Local constancy of the Hodge numbers on
the nearby K\"ahler fibres implies that the entire obstruction bundle is zero, so \cref{cor:kahler-surjective} applies. This gives an independent proof of the statement of \cite[Theorem~4.23]{Khe25KH} as the rank-zero case.
\end{proof}

The surjectivity criterion has primal and dual cohomological formulations. Define
\[
(V_{\mathrm K})^\perp:=\left\{\xi\in H^{2n-2}_{\DR}(X,\R):\int_Xc\wedge\xi=0\text{ for every }c\in V_{\mathrm K}\right\}
\]
and the real off-type subspace
\[
\mathcal T_0^{\mathrm K}:=\left\{[\alpha+\bar\alpha]_{\DR}:\alpha\in A^{n,n-2}(X_0),\ d\alpha=0\right\}\subset H^{2n-2}_{\DR}(X,\R).
\]
Equivalently, $\mathcal T_0^{\mathrm K}$ is the conjugation-fixed real locus of $H^{n,n-2}_{\dbar}(X_0)\oplus H^{n-2,n}_{\dbar}(X_0)$.

\begin{corollary}\label{cor:kahler-transverse}
The following conditions are equivalent:
\begin{enumerate}[label=\textup{(\roman*)}]
\item $V_{\mathrm K}+\mathcal P_0^{\mathrm K}=H_{\mathrm K}$;
\item $\mathcal O_0^{\mathrm K}|_{V_{\mathrm K}}$ is surjective as a real-linear map;
\item $(V_{\mathrm K})^\perp\cap\mathcal T_0^{\mathrm K}=\{0\}$.
\end{enumerate}
If they hold, every sufficiently small $X_t$ is K\"ahler hyperbolic.
\end{corollary}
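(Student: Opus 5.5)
The plan is to reduce everything to \cref{prop:abstract-transversality}, applied to the degree-two Dolbeault off-type datum $\mathcal O_0^{\mathrm K}$, and then to invoke \cref{cor:kahler-surjective} for the conclusion.

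\textbf{Step 1: verify the hypotheses of \cref{prop:abstract-transversality}.} We need that $\mathcal O_0^{\mathrm K}:H_{\mathrm K}\to H^{0,2}_{\dbar}(X_0)$ is surjective and that $\ker\mathcal O_0^{\mathrm K}=\mathcal P_0^{\mathrm K}$. The kernel identity is \eqref{eq:kahler-kernel}. For surjectivity, use that $X_0$ is K\"ahler.
\begin{itemize}
\item Given a class in $H^{0,2}_{\dbar}(X_0)$, take its harmonic representative $\beta$. On a compact K\"ahler manifold $\beta$ is $d$-closed.
\item The form $\beta+\bar\beta$ is then real and $d$-closed. Its $(0,2)$-component is $\beta$, so $\mathcal O_0^{\mathrm K}([\beta+\bar\beta])$ is the class of $\beta$.
\end{itemize}
Here we use that $\mathcal O_0^{\mathrm K}(c)$ is the harmonic representative of $o_0^{\mathrm K}(c)$, and $o_0^{\mathrm K}$ is independent of the representative by \cref{prop:kahler-offtype}. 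Since the target is a complex space regarded as a real space, we should also check that $i\beta$ is hit. This works the same way, using $i\beta+\overline{i\beta}$. With both hypotheses in place, \cref{prop:abstract-transversality} gives (i)$\Leftrightarrow$(ii).

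\textbf{Step 2: identify the annihilator, (i)$\Leftrightarrow$(iii).} Use the perfect Poincar\'e pairing $H^2\times H^{2n-2}\to\R$. By the last part of \cref{prop:abstract-transversality}, it suffices to show
\[
(\mathcal P_0^{\mathrm K})^\perp=\mathcal T_0^{\mathrm K}.
\]
By the Hodge decomposition on $X_0$, $\mathcal P_0^{\mathrm K}$ is the real part of $H^{1,1}$. Integrating $(1,1)\wedge(p,q)$ over $X$ is non-zero only when $(p,q)=(n-1,n-1)$, so the annihilator of $H^{1,1}$ in $H^{2n-2}(X,\C)$ is $H^{n,n-2}\oplus H^{n-2,n}$. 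This uses the non-degeneracy of the Hodge-graded Serre/Poincar\'e pairing. Taking conjugation-fixed real points gives exactly $\mathcal T_0^{\mathrm K}$, since $d$-closed $(n,n-2)$-forms represent $H^{n,n-2}$ by the $\ddbar$-lemma.

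\textbf{Step 3: conclusion.} Under (ii), \cref{cor:kahler-surjective} gives that every sufficiently small $X_t$ is K\"ahler hyperbolic. That corollary already handles the local constancy of $h^{0,2}$ and the positivity realization.

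The only step requiring care is the dimension count in Step 2, i.e.\ checking that $(\mathcal P_0^{\mathrm K})^\perp$ equals $\mathcal T_0^{\mathrm K}$ and not merely contains it. The inclusion $\mathcal T_0^{\mathrm K}\subseteq(\mathcal P_0^{\mathrm K})^\perp$ follows from bidegree. For equality, compare dimensions: $\dim(\mathcal P_0^{\mathrm K})^\perp=b_2-h^{1,1}=2h^{2,0}$, while $\dim_\R\mathcal T_0^{\mathrm K}=2h^{n,n-2}=2h^{2,0}$ by Serre duality and Hodge symmetry.
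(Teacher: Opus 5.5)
Your proposal is correct and follows the paper's proof essentially step for step. Surjectivity of the full map $\mathcal O_0^{\mathrm K}$ (via $d$-closed harmonic $(0,2)$-forms) together with \eqref{eq:kahler-kernel} gives (i)$\Leftrightarrow$(ii) through \cref{prop:abstract-transversality}. The type-compatibility of the Poincar\'e pairing gives $(\mathcal P_0^{\mathrm K})^\perp=\mathcal T_0^{\mathrm K}$, hence (i)$\Leftrightarrow$(iii), and \cref{cor:kahler-surjective} gives the conclusion. Your dimension count ($2h^{2,0}$ on both sides) simply makes explicit the paper's one-line appeal to non-degeneracy of the type-respecting pairing.
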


\begin{proof}
The full map
\[
\mathcal O_0^{\mathrm K}:H_{\mathrm K}\longrightarrow H^{0,2}_{\dbar}(X_0)
\]
is surjective: if $\alpha$ is a $\dbar$-harmonic $(0,2)$-form, then it is $d$-closed by
the K\"ahler identities, and the real form $\alpha+\bar\alpha$ maps to $[\alpha]_{\dbar}$. Since $\ker\mathcal O_0^{\mathrm K}=\mathcal P_0^{\mathrm K}$, \cref{prop:abstract-transversality} gives \textup{(i)}$\Leftrightarrow$\textup{(ii)}.

Under the Hodge decomposition of the K\"ahler manifold $X_0$, $\mathcal P_0^{\mathrm K}$ is the real $(1,1)$-summand in degree two. The Poincar\'e pairing is non-degenerate and respects type, so its annihilator is precisely $\mathcal T_0^{\mathrm K}$. The dual part of \cref{prop:abstract-transversality} gives \textup{(i)}$\Leftrightarrow$\textup{(iii)}. The final assertion follows from \cref{cor:kahler-surjective}.
\end{proof}

\begin{corollary}[Polarized maximal positive-index criterion]\label{cor:kahler-polarized-positive-index}
Let $X_0$ be a compact K\"ahler hyperbolic manifold of complex dimension $n$, and choose a K\"ahler hyperbolic class
\[
a\in\mathcal K_0\cap V_{\mathrm K}.
\]
Define the polarized symmetric form
\[
Q_a(\alpha,\beta):=\int_X\alpha\wedge\beta\wedge a^{n-2},\qquad \alpha,\beta\in H^2_{\DR}(X,\R),
\]
and, for a real subspace $U\subset H^2_{\DR}(X,\R)$, let $i_a^+(U)$ denote the maximal dimension of a subspace of $U$ on which $Q_a$ is positive definite. If
\[
i_a^+(V_{\mathrm K})=i_a^+\bigl(H^2_{\DR}(X,\R)\bigr)=1+2h^{2,0}(X_0),
\]
then every sufficiently small deformation $X_t$ is K\"ahler hyperbolic.
\end{corollary}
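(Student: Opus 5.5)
The plan is to verify condition (iii) of \cref{cor:kahler-transverse}, $(V_{\mathrm K})^\perp\cap\mathcal T_0^{\mathrm K}=\{0\}$, or equivalently condition (i), $V_{\mathrm K}+\mathcal P_0^{\mathrm K}=H_{\mathrm K}$. Either one gives deformation openness directly.

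The first step is to compute the signature of $Q_a$ on $H^2_{\DR}(X,\R)$ using the Hodge--Riemann bilinear relations on the K\"ahler manifold $X_0$, polarized by the K\"ahler class $a$. Decompose
\[
H^2=\R a\oplus P^{1,1}_{\R}\oplus \bigl(H^{2,0}\oplus H^{0,2}\bigr)_{\R},
\]
where $P^{1,1}$ denotes the $a$-primitive real $(1,1)$-classes. The three summands are $Q_a$-orthogonal.
\begin{itemize}
\item $Q_a(a,a)=\int a^n>0$.
\item $Q_a$ is negative definite on $P^{1,1}_{\R}$.
\item $Q_a$ is positive definite on the real locus of $H^{2,0}\oplus H^{0,2}$, which has real dimension $2h^{2,0}$. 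Every $(2,0)$-class is primitive, and for a $(2,0)$-form $\alpha$ the form $\alpha\wedge\bar\alpha\wedge a^{n-2}$ is positive with the standard sign.
\end{itemize}
So $i_a^+(H^2)=1+2h^{2,0}(X_0)$ and the positive cone is spanned by $W:=\R a\oplus(H^{2,0}\oplus H^{0,2})_{\R}$. Note also that $\mathcal P_0^{\mathrm K}=\R a\oplus P^{1,1}_{\R}$.

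The second step uses the hypothesis $i_a^+(V_{\mathrm K})=1+2h^{2,0}$. Pick a subspace $U\subset V_{\mathrm K}$ of that dimension on which $Q_a$ is positive definite. Since $Q_a$ is negative semidefinite (indeed definite) on $P^{1,1}_{\R}$, we have $U\cap P^{1,1}_{\R}=\{0\}$. By dimension count, $U+P^{1,1}_{\R}=H^2$. Hence $V_{\mathrm K}+\mathcal P_0^{\mathrm K}\supseteq U+P^{1,1}_{\R}=H^2$, which is condition (i). The final step invokes \cref{cor:kahler-transverse}, which gives that every sufficiently small $X_t$ is K\"ahler hyperbolic.

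There is also a more direct route. Since $a\in V_{\mathrm K}$, projecting $U$ along $P^{1,1}_{\R}$ onto $W$ is injective, hence an isomorphism. This shows that the composite $V_{\mathrm K}\to H^{0,2}$ (take the $(0,2)$-part) is surjective, which is condition (ii).

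The main obstacle is getting the Hodge--Riemann signs and the orthogonality right. In particular one must check that $(2,0)$-classes are automatically primitive, since $\Lambda$ kills $(2,0)$-forms. One must also check that the relevant sign for $p-q=\pm2$ makes $Q_a(\alpha+\bar\alpha,\alpha+\bar\alpha)=2\int\alpha\wedge\bar\alpha\wedge a^{n-2}$ positive, while the primitive $(1,1)$ part has the opposite sign. The cases $n=2$, where $a^{n-2}=1$, and higher $n$ are covered uniformly by the Hodge index theorem in degree $2$.
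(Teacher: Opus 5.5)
Your proof is correct, but it reaches \cref{cor:kahler-transverse} through condition (i), whereas the paper verifies the dual condition (iii).

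\textbf{The paper's route.} It uses the same Hodge--Riemann decomposition. It then argues on the dual side:
\begin{enumerate}
\item A maximal $Q_a$-positive subspace $P\subset V_{\mathrm K}$ has negative definite $Q_a$-orthogonal complement.
\item Hence $(V_{\mathrm K})^{\perp_{Q_a}}\cap W_0=\{0\}$, where $W_0=(H^{2,0}\oplus H^{0,2})_{\R}$.
\item Hard Lefschetz identifies $L_a^{n-2}W_0$ with $\mathcal T_0^{\mathrm K}$. The identity $\int_X v\wedge L_a^{n-2}\eta=Q_a(v,\eta)$ turns $Q_a$-orthogonality into Poincar\'e orthogonality, giving $(V_{\mathrm K})^\perp\cap\mathcal T_0^{\mathrm K}=\{0\}$.
\end{enumerate}

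\textbf{Your route.} You bypass the Lefschetz transfer entirely. From $U\cap P^{1,1}_{\R}=\{0\}$ and
\[
\dim U+\dim P^{1,1}_{\R}=(1+2h^{2,0})+(h^{1,1}-1)=b_2,
\]
you get $U\oplus P^{1,1}_{\R}=H^2$, hence $V_{\mathrm K}+\mathcal P_0^{\mathrm K}=H_{\mathrm K}$ directly.

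\textbf{Comparison.} Your argument is shorter. It needs only semidefiniteness of $Q_a$ on the primitive $(1,1)$ part and the Hodge-number count on the K\"ahler fibre $X_0$. It also yields the slightly stronger statement $V_{\mathrm K}+P^{1,1}_{\R}=H^2$. The paper's version instead presents the criterion as an instance of the dual transversality condition (iii). It makes explicit how the positive directions inside $V_{\mathrm K}$ detect every off-type direction in $\mathcal T_0^{\mathrm K}$.

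\textbf{Minor point.} In your alternative route, the phrase ``since $a\in V_{\mathrm K}$'' is unnecessary. Injectivity of the projection of $U$ along $P^{1,1}_{\R}$ uses only $U\cap P^{1,1}_{\R}=\{0\}$.
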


\begin{proof}
Put
\[
W_0:=\bigl(H^{2,0}(X_0)\oplus H^{0,2}(X_0)\bigr)_{\R}\subset H^2_{\DR}(X,\R).
\]
The Hodge--Riemann bilinear relations for the K\"ahler class $a$ give the orthogonal decomposition
\[
H^2_{\DR}(X,\R)=\R a\oplus H^{1,1}_{\mathrm{prim}}(X_0,\R)\oplus W_0,
\]
with respect to $Q_a$, where $Q_a$ is positive definite on $\R a\oplus W_0$ and negative definite on $H^{1,1}_{\mathrm{prim}}(X_0,\R)$. Consequently,
\[
i_a^+\bigl(H^2_{\DR}(X,\R)\bigr)=1+2h^{2,0}(X_0),
\]
and $W_0$ is $Q_a$-positive definite.

By hypothesis, choose a $Q_a$-positive-definite subspace $P\subset V_{\mathrm K}$ of dimension $1+2h^{2,0}(X_0)$. It is a maximal positive subspace of $H^2_{\DR}(X,\R)$, hence $P^{\perp_{Q_a}}$ is negative definite. Therefore
\[
(V_{\mathrm K})^{\perp_{Q_a}}\cap W_0=\{0\},
\]
where
\[
(V_{\mathrm K})^{\perp_{Q_a}}:=\{\eta\in H^2_{\DR}(X,\R):Q_a(v,\eta)=0\text{ for every }v\in V_{\mathrm K}\}.
\]

By the hard Lefschetz theorem, multiplication by $a^{n-2}$ is a Hodge isomorphism
\[
L_a^{n-2}:H^2_{\DR}(X,\R)\longrightarrow H^{2n-2}_{\DR}(X,\R),\qquad \eta\longmapsto a^{n-2}\smile\eta,
\]
and it maps $W_0$ isomorphically onto the real off-type subspace $\mathcal T_0^{\mathrm K}$. If
\[
\xi\in(V_{\mathrm K})^\perp\cap\mathcal T_0^{\mathrm K},
\]
write $\xi=L_a^{n-2}\eta$ with $\eta\in W_0$. For every $v\in V_{\mathrm K}$,
\[
0=\int_Xv\wedge\xi=\int_Xv\wedge\eta\wedge a^{n-2}=Q_a(v,\eta).
\]
Thus $\eta\in(V_{\mathrm K})^{\perp_{Q_a}}\cap W_0=\{0\}$, so $\xi=0$. Hence
\[
(V_{\mathrm K})^\perp\cap\mathcal T_0^{\mathrm K}=\{0\},
\]
and \cref{cor:kahler-transverse} gives the conclusion.
\end{proof}

\begin{corollary}\label{cor:kahler-linebundle}
Let $\mathcal L\to\mathcal X$ be a holomorphic line bundle and put $L_t:=\mathcal L|_{X_t}$. Suppose that $L_0$ is positive and
\[
c_1(L_0)\in V_{\mathrm K}.
\]
Then every sufficiently small deformation $X_t$ is K\"ahler hyperbolic. In particular,
the conclusion holds if
\[
c_1(K_{X_0})\in V_{\mathrm K}.
\]
\end{corollary}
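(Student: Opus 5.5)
The idea is to show that the fixed class $c_t\equiv c_1(L_0)$ stays of type $(1,1)$ and positive on nearby fibres, and then to invoke \cref{crit:fixed-class}'s Kähler analogue: any Kähler form in a class of $V_{\mathrm K}$ is Kähler hyperbolic by \cref{prop:kahler-intersection}. The Ehresmann identification makes $X$ a smooth manifold carrying the line bundle $\mathcal L|_{X_t}$. Topologically $L_t$ is the restriction of one bundle on $\mathcal X\simeq X\times\Delta$, so $c_1(L_t)\in H^2(X,\mathbb Z)$ is locally constant in $t$. Its image in $H^2_{\DR}(X,\R)$ is therefore the fixed class $c:=c_1(L_0)\in V_{\mathrm K}$.

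Since $L_t$ is holomorphic on $X_t$, the class $c$ is represented by the curvature form $\tfrac{i}{2\pi}\Theta_{h}(L_t)$ of any Hermitian metric $h$ on $L_t$. This is a real $d$-closed $(1,1)$-form for $J_t$, so $c\in V_{\mathrm K}\cap\mathcal P_t^{\mathrm K}$ for every $t$. In the language of the template, the constant section $c_t\equiv c$ lies in the kernel bundle. I will not try to check constancy of $\dim(V_{\mathrm K}\cap\mathcal P^{\mathrm K}_t)$. Instead I would bypass \cref{thm:kahler-constant-rank} and use only its positivity step for this explicit path.

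For positivity, I would choose a metric $h_0$ on $L_0$ with $\omega_0:=\tfrac{i}{2\pi}\Theta_{h_0}>0$. Then I extend it to a smooth family of Hermitian metrics $h_t$ on $L_t$, which is possible because $\mathcal L$ is a smooth line bundle over $X\times\Delta$ and metrics extend. The curvature $\omega_t=\tfrac{i}{2\pi}\Theta_{h_t}(L_t)$ is computed by the Chern connection of $(L_t,h_t,J_t)$. In a local holomorphic frame of $\mathcal L$ it equals $-\tfrac{i}{2\pi}\partial_t\dbar_t\log h_t$, which depends smoothly on $t$ because both the $J_t$-holomorphic frames and $h_t$ vary smoothly. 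Hence $\omega_t\to\omega_0$ in $C^0$, and by openness of positivity jointly in the form and complex structure, $\omega_t$ is Kähler for small $t$. Its class is $c\in V_{\mathrm K}$, so $X_t$ is Kähler hyperbolic.

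For the canonical case, take $\mathcal L=K_{\mathcal X/\Delta}$, whose restriction to $X_t$ is $K_{X_t}$. I read the hypothesis as including that $K_{X_0}$ is positive, as in the main statement; the hypothesis $c_1(K_{X_0})\in V_{\mathrm K}$ then gives the conclusion. The main obstacle is justifying the smooth dependence of $\omega_t$ on $t$. I would handle this by working with local holomorphic frames of $\mathcal L$ on the total space $\mathcal X$: their restrictions are $J_t$-holomorphic frames varying smoothly, which makes the curvature formula manifestly smooth in $t$.
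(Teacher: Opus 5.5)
Your argument for the first part is the paper's argument. You extend $h_0$ to a smooth metric on $\mathcal L$ near the central fibre. The fibrewise Chern forms $\omega_t$ are then real $d$-closed $(1,1)$-forms depending smoothly on $t$, hence positive for small $t$. Since $[\omega_t]_{\DR}=c_1(L_t)$ is the restriction of the single topological class $c_1(\mathcal L)$, it equals $c_1(L_0)\in V_{\mathrm K}$, and \cref{prop:kahler-intersection} concludes. Your remark about local holomorphic frames of $\mathcal L$ on $\mathcal X$ just makes the smooth dependence on $t$ explicit.

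\textbf{Gap in the canonical-bundle case.} The statement asserts the conclusion under the sole hypothesis $c_1(K_{X_0})\in V_{\mathrm K}$. You instead add positivity of $K_{X_0}$ as an assumption, so you only prove a weaker special case. The missing input is that this positivity is automatic. In this section $X_0$ is assumed K\"ahler hyperbolic, and a compact K\"ahler hyperbolic manifold has ample canonical bundle \cite[Theorem~2.11]{CY18}. Hence $L_0=K_{X_0}=K_{\mathcal X/\Delta}|_{X_0}$ is positive with no further hypothesis, and the first part applies. Together with the automatic extension via $K_{\mathcal X/\Delta}$, this is what makes the canonical class special. The same fact is used again in \cref{cor:kahler-h11-one}.
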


\begin{proof}
Choose a Hermitian metric $h_0$ on $L_0$ with positive Chern curvature. After shrinking
$\Delta$, extend $h_0$ to a smooth Hermitian metric $h$ on $\mathcal L$ over a neighbourhood of the central fibre, and write $h_t:=h|_{L_t}$. The fibrewise Chern forms
\[
\omega_t:=\frac{\sqrt{-1}}{2\pi}\,\Theta(L_t,h_t)
\]
are real $d$-closed $(1,1)$-forms representing $c_1(L_t)$ and depend smoothly on $t$.
Since $\omega_0>0$, positivity is open, so $\omega_t>0$ for all sufficiently small $t$.
Hence $c_1(L_t)\in\mathcal K_t$.

Under the Ehresmann identification, the classes $c_1(L_t)$ form the fixed class $c_1(L_0)$: they are the restrictions to the fibres of the single topological class $c_1(\mathcal L)$. Thus $c_1(L_t)=c_1(L_0)\in V_{\mathrm K}$ for small $t$, and therefore $\mathcal K_t\cap V_{\mathrm K}\neq\varnothing$. The conclusion follows from \cref{prop:kahler-intersection}.

For the final assertion, take $\mathcal L=K_{\mathcal X/\Delta}$. Its restriction to
$X_t$ is $K_{X_t}$, while $K_{X_0}$ is ample for a compact K\"ahler hyperbolic manifold
\cite[Theorem~2.11]{CY18}. Hence $L_0=K_{X_0}$ is positive, and the first part applies.
\end{proof}

The existence of $\mathcal L$ on the total family is essential in this formulation. A positive line bundle on $X_0$ need not extend holomorphically to arbitrary nearby fibres; equivalently, its integral Chern class may cease to be of type $(1,1)$ as the complex structure moves. The canonical bundle is distinguished because the relative canonical bundle $K_{\mathcal X/\Delta}$ provides the required extension automatically.

\begin{corollary}[Non-isolated Hodge-locus criterion]\label{cor:kahler-hodge-locus}
Let $\pi:\mathcal X\to\Delta$ be a small deformation of a compact K\"ahler hyperbolic manifold $X_0=(X,J_0)$, and let
\[
c_0\in\mathcal K_0\cap V_{\mathrm K}
\]
be a prescribed K\"ahler hyperbolic class. Under the Ehresmann identification, define
\[
\operatorname{Hdg}(c_0):=\bigl\{t\in\Delta:c_0\in\mathcal P_t^{\mathrm K}\bigr\}=\bigl\{t\in\Delta:c_0\in H^{1,1}(X_t,\mathbb R)\bigr\}.
\]
If $0$ is a non-isolated point of $\operatorname{Hdg}(c_0)$, then every sufficiently small fibre $X_t$ is K\"ahler hyperbolic. Equivalently, it is enough that there exist $t_\nu\to0$, with $t_\nu\neq0$, such that $c_0$ is of type $(1,1)$ on $X_{t_\nu}$.
\end{corollary}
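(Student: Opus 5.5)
We will show that $\operatorname{Hdg}(c_0)$ contains a full neighbourhood of $0$. The fixed class $c_0$ then stays of type $(1,1)$ on all nearby fibres, and positivity of a representative is recovered exactly as in the proof of \cref{thm:kahler-constant-rank}, with the constant path $c_t\equiv c_0$.

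\emph{Step 1 (holomorphic obstruction section).} By Kodaira--Spencer stability the nearby fibres are K\"ahler, and $h^{0,2}(X_t)$ is locally constant. Hence the spaces $H^{0,2}(X_t)\cong \overline{H^{2,0}(X_t)}$ form a vector bundle. Rather than the $(0,2)$-obstruction, we use the dual Hodge-theoretic description. A real class $c$ lies in $H^{1,1}(X_t,\R)$ if and only if
\[
\int_X c\wedge\sigma\wedge a^{n-2}\text{-type pairings vanish},
\]
or, more simply, if and only if $\int_X c\wedge \sigma_t\wedge\beta=0$ for all holomorphic $2$-forms $\sigma_t\in H^{2,0}(X_t)$ and all $\beta\in H^{2n-4}_{\DR}(X,\C)$. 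By Hodge decomposition, this is equivalent to $c$ having no $(2,0)$-component; reality of $c$ then also kills the $(0,2)$-component.

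Consider $F^2H^2=H^{2,0}(X_t)\subset H^2_{\DR}(X,\C)$. Since the Hodge numbers are constant, it varies holomorphically in $t$ (Griffiths). Choose a local holomorphic frame $\sigma_1(t),\dots,\sigma_m(t)$ of this subbundle. Then $t\in\operatorname{Hdg}(c_0)$ if and only if the holomorphic functions
\[
f_{j,\beta}(t):=\int_X c_0\wedge\sigma_j(t)\wedge\beta,\qquad \beta\text{ ranging over a basis of }H^{2n-4}_{\DR}(X,\C),
\]
all vanish. To justify this, note that for a K\"ahler $X_t$ the class $c_0$ has zero $(0,2)$-part exactly when $c_0\cup\sigma$ vanishes in cohomology for every $\sigma\in H^{2,0}(X_t)$. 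Indeed, $c_0^{0,2}\cup\sigma\cup\omega^{n-2}$ pairs nondegenerately by Hodge--Riemann, and the other components of the product land in other types. Poincar\'e duality then converts the vanishing of $c_0\cup\sigma$ into the vanishing of the $f_{j,\beta}$.

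\emph{Step 2 (identity theorem).} If $\Delta$ is a one-dimensional disc, the $f_{j,\beta}$ are holomorphic on a connected neighbourhood of $0$ and vanish on $t_\nu\to0$ with $t_\nu\ne0$. Hence they vanish identically, so $\operatorname{Hdg}(c_0)$ contains a neighbourhood of $0$. For a multi-dimensional base, a non-isolated zero does not force vanishing. We would then interpret the statement for the one-parameter families $\Delta$ used throughout, as the notation $t_\nu\to0$ suggests. The main obstacle is this step: getting holomorphic (not merely smooth) dependence. The frame for $F^2$ is holomorphic by Griffiths transversality of the Hodge filtration, whereas the harmonic $(0,2)$-map $\mathcal O^{\mathrm K}_t$ is only smooth. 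For this reason we pair against $F^2$, which is holomorphic, and not against the harmonic $(0,2)$-projection. On surfaces the $\beta$ are just constants.

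\emph{Step 3 (positivity).} Now $c_0\in V_{\mathrm K}\cap\mathcal P^{\mathrm K}_t$ for all small $t$. Set $\Omega_t=\omega_0$ and $A_t=(\omega_0)^{0,2}_{J_t}$, so that $\mathcal H_{\dbar,t}A_t=0$. Then $u_t:=\dbar_t^*G_{\dbar,t}A_t\to0$ in $C^\infty$, and
\[
\omega_t=\omega_0-d(u_t+\overline{u_t})
\]
is a $J_t$-K\"ahler form in $c_0$ for small $t$. Finally, \cref{prop:kahler-intersection} (or \cref{crit:fixed-class}-style transfer via \cref{lem:cohomological}) gives that $X_t$ is K\"ahler hyperbolic.
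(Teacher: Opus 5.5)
Your Step~1 characterization of the Hodge locus is false for $n\geq3$, and the identity-theorem argument breaks with it.

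You claim that a real class $c$ is of type $(1,1)$ on $X_t$ if and only if $c\smile\sigma=0$ in $H^4_{\DR}(X,\C)$ for every $\sigma\in H^{2,0}(X_t)$, i.e.\ if and only if all $f_{j,\beta}(t)$ vanish. Only one direction holds: if $c\smile\sigma=0$ for all $\sigma$, then its $(2,2)$-component $c^{0,2}\smile\sigma$ vanishes, and Hodge--Riemann gives $c^{0,2}=0$. The converse fails. If $c=c^{1,1}$, then $c\smile\sigma=c^{1,1}\smile\sigma$ is a $(3,1)$-class, and vanishing in cohomology requires this component to vanish too. That the other components \emph{land in other types} does not make them zero.

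Concretely, at $t=0$ the class $c_0$ is K\"ahler. By hard Lefschetz, $c_0^{n-2}\smile\sigma\neq0$ for every non-zero $\sigma\in H^{2,0}(X_0)$. Taking $\beta=c_0^{n-3}\smile\overline{\sigma_j(0)}$ gives $f_{j,\beta}(0)=\int_X c_0^{n-2}\wedge\sigma_j(0)\wedge\overline{\sigma_j(0)}\neq0$ by Hodge--Riemann. So $0\in\operatorname{Hdg}(c_0)$ but $0$ is not a common zero of the $f_{j,\beta}$. By continuity, whenever $h^{2,0}(X_0)>0$ these functions have no common zero in a whole neighbourhood of $0$. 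The hypothesis $t_\nu\in\operatorname{Hdg}(c_0)$ therefore gives nothing to feed into the identity theorem. For surfaces, $\beta$ is a constant and your argument is correct as written.

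The repair is to pair against the piece of the Hodge filtration that actually detects the $(0,2)$-part, namely $F^nH^{2n-2}(X_t)=H^{n,n-2}(X_t)$. For $\eta$ of type $(n,n-2)$, $\int_Xc\wedge\eta$ sees only $c^{0,2}$, and the pairing $H^{0,2}(X_t)\times H^{n,n-2}(X_t)\to\C$ is perfect. Since $F^nH^{2n-2}$ is a holomorphic subbundle (by Griffiths' holomorphicity of the Hodge filtration, not transversality), a local holomorphic frame $\eta_j(t)$ gives holomorphic functions $f_j(t)=\int_Xc_0\wedge\eta_j(t)$. Their common zero set is exactly $\operatorname{Hdg}(c_0)$, so your Steps~2 and~3 then go through.

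This corrected version is the Poincar\'e dual of the paper's argument. The paper maps $c_0$ to a holomorphic section $s_{c_0}$ of $R^2\pi_*\mathcal O_{\mathcal X}$, which is a vector bundle by Grauert. This amounts to projecting to $H^2/F^1H^2$, whose Poincar\'e annihilator is $F^nH^{2n-2}$, and the paper identifies $\operatorname{Hdg}(c_0)=Z(s_{c_0})$ directly. Your Step~3 coincides with the paper's positivity step.
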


\begin{proof}
After shrinking $\Delta$, all fibres are K\"ahler. Their Hodge numbers are locally constant, so Grauert's theorem implies that $R^2\pi_*\mathcal O_{\mathcal X}$ is a holomorphic vector bundle. The fixed real class $c_0$, viewed after complexification as a flat section of the constant local system $R^2\pi_*\mathbb C$, has an image
\[
s_{c_0}\in H^0\!\left(\Delta,R^2\pi_*\mathcal O_{\mathcal X}\right)
\]
under the natural morphism $R^2\pi_*\mathbb C\to R^2\pi_*\mathcal O_{\mathcal X}$. At $t$, the value $s_{c_0}(t)$ is the image of $c_0$ in
\[
H^2(X_t,\mathcal O_{X_t})\simeq H^{0,2}_{\dbar}(X_t).
\]
For a real degree-two class on a K\"ahler manifold, this image vanishes exactly when the class is of type $(1,1)$. Hence
\[
\operatorname{Hdg}(c_0)=Z(s_{c_0}).
\]
Because the zeros accumulate at $0$, the identity theorem for a holomorphic vector-bundle section gives $s_{c_0}\equiv0$. This is the same holomorphic-section principle used by Rao--Tsai for integral classes \cite[Lemma~4.14]{RT21}, but no integrality is needed at this stage. Thus
\[
c_0\in\mathcal P_t^{\mathrm K}
\]
for every sufficiently small $t$.

It remains only to recover positivity in this fixed class. Choose a K\"ahler hyperbolic form $\omega_0$ with $[\omega_0]_{\DR}=c_0$, regard it as a fixed real closed two-form on the underlying smooth manifold, and put
\[
A_t:=(\omega_0)^{0,2}_{J_t}.
\]
Since $c_0\in\mathcal P_t^{\mathrm K}$, \cref{prop:kahler-offtype} gives $[A_t]_{\dbar}=0$. Let $G_{\dbar,t}$ denote the Green operator of the $\dbar_t$-Laplacian and set
\[
u_t:=\dbar_t^*G_{\dbar,t}A_t,\qquad\omega_t:=\omega_0-d(u_t+\overline{u_t}).
\]
Exactly as in the proof of \cref{thm:kahler-main}, $\omega_t$ is real, $d$-closed, of type $(1,1)$ on $X_t$, represents $c_0$, and converges to $\omega_0$ in $C^\infty$. Hence $\omega_t>0$ for $|t|\ll1$. Finally $c_0\in V_{\mathrm K}$ is fixed, so \cref{prop:kahler-intersection} shows that $\omega_t$ is K\"ahler hyperbolic.
\end{proof}

\Cref{cor:kahler-linebundle} is a special case of \cref{cor:kahler-hodge-locus}. Indeed, if $\mathcal L\to\mathcal X$ is holomorphic and $c_0:=c_1(L_0)$, then under the Ehresmann identification one has
\[
c_1(L_t)=c_0\in H^{1,1}(X_t,\R)
\]
for every sufficiently small $t$. Hence the Hodge locus $\operatorname{Hdg}(c_0)$ contains a neighbourhood of $0$, so in particular $0$ is non-isolated. The direct proof of \cref{cor:kahler-linebundle} is retained because it produces explicit nearby positive Chern forms and isolates the automatic-extension case $\mathcal L=K_{\mathcal X/\Delta}$.

Under the additional rationality hypothesis, the next result strengthens \cref{cor:kahler-hodge-locus} by globalizing a positive multiple of the persistent class to the total family; the hyperbolicity conclusion itself is already known.

\begin{corollary}[Rational globalization]\label{cor:kahler-rational-globalization}
In the setting of \cref{cor:kahler-hodge-locus}, suppose in addition that
\[
c_0\in H^2(X,\mathbb Q)\subset H^2_{\DR}(X,\mathbb R).
\]
Then, after shrinking $\Delta$, there exist an integer $m>0$ and a holomorphic line bundle $\mathcal L\to\mathcal X$ such that
\[
c_1(\mathcal L|_{X_t})_{\mathbb R}=m c_0
\]
for every $t$. In particular, $\mathcal L|_{X_t}$ is positive for all sufficiently small $t$, and the fibres are K\"ahler hyperbolic.
\end{corollary}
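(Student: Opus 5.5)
The plan is to run the argument of \cref{cor:kahler-hodge-locus} first and then globalize with the exponential sequence in the Rao--Tsai manner \cite{RT21,RT22}. By that corollary, after shrinking $\Delta$ we have $c_0\in\mathcal P_t^{\mathrm K}$ for every $t$; equivalently, the holomorphic section $s_{c_0}$ of $R^2\pi_*\mathcal O_{\mathcal X}$ vanishes identically.

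Choose $m>0$ with $mc_0$ the image of an integral class $\gamma\in H^2(X,\mathbb Z)$. If torsion causes trouble, first pass to an integral lift of $mc_0$ modulo torsion. Shrinking $\Delta$ to a disc, $\mathcal X$ deformation retracts onto $X_0$, so $H^2(\mathcal X,\mathbb Z)\cong H^2(X,\mathbb Z)$, and we may regard $\gamma$ as a class on the total space.

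Next use the exponential sequence on $\mathcal X$:
\[
H^1(\mathcal X,\mathcal O_{\mathcal X}^*)\to H^2(\mathcal X,\mathbb Z)\to H^2(\mathcal X,\mathcal O_{\mathcal X}).
\]
It suffices to show that $\gamma\mapsto0$ in $H^2(\mathcal X,\mathcal O_{\mathcal X})$. Since $\Delta$ is Stein, the Leray spectral sequence gives $H^2(\mathcal X,\mathcal O_{\mathcal X})\cong H^0(\Delta,R^2\pi_*\mathcal O_{\mathcal X})$; the higher terms $H^p(\Delta,R^q\pi_*\mathcal O)$ with $p>0$ vanish because $R^q\pi_*\mathcal O_{\mathcal X}$ is coherent, indeed locally free by Grauert. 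Under this identification the image of $\gamma$ is exactly $m\,s_{c_0}$, using compatibility of $\mathbb Z\to\mathbb C\to\mathcal O$ with the natural map $R^2\pi_*\mathbb C\to R^2\pi_*\mathcal O_{\mathcal X}$. That section vanishes by the first step, so $\gamma=c_1(\mathcal L)$ for some holomorphic line bundle $\mathcal L$. Restricting to fibres gives $c_1(\mathcal L|_{X_t})_{\mathbb R}=mc_0$.

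For positivity, note that $mc_0\in\mathcal K_0$, so $\mathcal L|_{X_0}$ is positive by Kodaira's characterization; in fact this follows directly from $c_1$ being a Kähler class via the $\ddbar$-lemma, which lets us pick a metric whose curvature equals $m\omega_0$. Then \cref{cor:kahler-linebundle} applies: $c_1(L_0)=mc_0\in V_{\mathrm K}$ because $V_{\mathrm K}$ is a linear subspace. This yields positivity of $\mathcal L|_{X_t}$ and Kähler hyperbolicity of the nearby fibres.

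The main obstacle I expect is the bookkeeping in the Leray identification. One must check that the image of $\gamma$ in $H^2(\mathcal X,\mathcal O)$ really corresponds to the section $s_{mc_0}$, and handle torsion and the choice of $m$ carefully. I would settle this first by working at the sheaf level, with the morphism of sheaves $\underline{\mathbb Z}_{\mathcal X}\to\mathcal O_{\mathcal X}$ and base change on a contractible Stein base.
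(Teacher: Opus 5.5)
Your proof is correct. The globalization step is exactly the paper's: integral lift of $mc_0$, the Ehresmann trivialization to place it in $H^2(\mathcal X,\mathbb Z)$, Cartan B plus Leray to identify $H^2(\mathcal X,\mathcal O_{\mathcal X})$ with $H^0(\Delta,R^2\pi_*\mathcal O_{\mathcal X})$, then the exponential sequence. The bookkeeping you flag is just naturality of the Leray edge map, and torsion is harmless because the map to $H^2(\mathcal X,\mathcal O_{\mathcal X})$ factors through complex coefficients.

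You differ from the paper only in the positivity step. The paper reuses the K\"ahler forms $\omega_t$ with $[\omega_t]_{\DR}=c_0$ that the proof of \cref{cor:kahler-hodge-locus} already built by the $\dbar_t$-Green-operator correction. It then observes that $m\omega_t$ is a positive representative of $c_1(\mathcal L|_{X_t})_{\R}$. You instead make $L_0$ positive via the $\ddbar$-lemma on $X_0$ and invoke \cref{cor:kahler-linebundle}, whose proof extends one Hermitian metric over $\mathcal X$ and uses openness of positivity of the curvature forms. (Positivity of $\mathcal L|_{X_t}$ comes from that proof, not from its statement, but this is immediate.)

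What your route buys is that, once $\mathcal L$ exists, you need only the identity-theorem part of \cref{cor:kahler-hodge-locus}. The fibrewise elliptic correction is bypassed entirely, and hyperbolicity follows from the elementary line-bundle criterion. The paper's route costs nothing extra because the $\omega_t$ are already in hand. It also makes explicit that the rational hypothesis contributes only the globalization, since hyperbolicity was already established in \cref{cor:kahler-hodge-locus}.
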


\begin{proof}
Choose $m>0$ so that $c:=mc_0$ is the real image of an integral class in $H^2(X,\mathbb Z)$. By the proof of \cref{cor:kahler-hodge-locus}, the class $c_0$, hence also $c$, is of type $(1,1)$ on every sufficiently small fibre. Equivalently, the image of $c$ in
\[
H^0\!\left(\Delta,R^2\pi_*\mathcal O_{\mathcal X}\right)
\]
is zero.

Using the differentiable trivialization over the contractible disc, regard $c$ as a class in $H^2(\mathcal X,\mathbb Z)$. Since $\Delta$ is Stein, the Leray spectral sequence gives
\[
H^2(\mathcal X,\mathcal O_{\mathcal X})\simeq H^0\!\left(\Delta,R^2\pi_*\mathcal O_{\mathcal X}\right).
\]
Thus the image of $c$ in $H^2(\mathcal X,\mathcal O_{\mathcal X})$ vanishes, and the exponential sequence produces a holomorphic line bundle $\mathcal L\to\mathcal X$ with $c_1(\mathcal L)=c$. This is precisely the globalization step used by Rao--Tsai for persistent integral classes \cite[Proposition~4.13 and Lemma~4.14]{RT21}.

The K\"ahler hyperbolicity conclusion already follows from \cref{cor:kahler-hodge-locus}. More precisely, its proof provides K\"ahler forms $\omega_t$ with $[\omega_t]_{\DR}=c_0$. Therefore $m\omega_t$ is a positive representative of
\[
c_1(\mathcal L|_{X_t})_{\R}=mc_0,
\]
so $\mathcal L|_{X_t}$ is positive for every sufficiently small $t$. Thus, beyond \cref{cor:kahler-hodge-locus}, the new content is the globalization of the rational persistent class to a holomorphic line bundle on the total family.
\end{proof}

\begin{corollary}\label{cor:kahler-h11-one}
Let $X_0$ be a compact K\"ahler hyperbolic manifold of complex dimension $n\geq2$. If $h^{1,1}(X_0)=1$, then every sufficiently small deformation $X_t$ is K\"ahler hyperbolic.
\end{corollary}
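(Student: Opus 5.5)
The plan is to reduce \cref{cor:kahler-h11-one} to the polarized maximal positive-index criterion \cref{cor:kahler-polarized-positive-index}, or equivalently to the transversality criterion \cref{cor:kahler-transverse}.

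Fix a K\"ahler hyperbolic class $a\in\mathcal K_0\cap V_{\mathrm K}$. Since $h^{1,1}(X_0)=1$, the real $(1,1)$-part of $H^2_{\DR}(X,\R)$ is the line $\R a$. The Hodge decomposition then reads
\[
H^2_{\DR}(X,\R)=\R a\oplus W_0,\qquad W_0=\bigl(H^{2,0}(X_0)\oplus H^{0,2}(X_0)\bigr)_{\R},
\]
and, with $\mathcal P_0^{\mathrm K}$ as in the paper, we have $\mathcal P_0^{\mathrm K}=\R a$. The primitive $(1,1)$-part $H^{1,1}_{\mathrm{prim}}(X_0,\R)$ vanishes.

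The main step is to show that $V_{\mathrm K}=H^2_{\DR}(X,\R)$, or at least that $V_{\mathrm K}+\R a=H^2_{\DR}(X,\R)$. Condition~(i) of \cref{cor:kahler-transverse} would then hold and the conclusion would follow. The most natural tool I see is the ideal property, \cref{prop:graded-ideal}. By hard Lefschetz, multiplication by $a^{n-2}$ is an isomorphism $H^2\to H^{2n-2}$, and since $a\in V_{\mathrm K}$, every class of the form $a^{n-2}\smile\eta$ lies in $\Vhyp^{2n-2}$. That only gives saturation in degree $2n-2$, however, not in degree two.

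So I would instead work with the positive-index formulation. On $H^2$ the form $Q_a$ is positive definite, because the negative part $H^{1,1}_{\mathrm{prim}}$ is zero, and $i_a^+(H^2)=1+2h^{2,0}=b_2$. The hypothesis of \cref{cor:kahler-polarized-positive-index} therefore becomes $\dim V_{\mathrm K}=b_2$, that is, $V_{\mathrm K}=H^2$, so nothing is gained by this route either. I expect this to be the main obstacle: showing that the classes in $W_0$ are $\widetilde d$-bounded, or finding an argument that avoids needing this.

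The first thing I would try is to bypass $V_{\mathrm K}$ entirely and use the Hodge-locus criterion \cref{cor:kahler-hodge-locus}. The nearby fibres are K\"ahler with $h^{1,1}(X_t)=1$ by local constancy of Hodge numbers. Each $X_t$ is therefore projective-like, with $H^{1,1}(X_t,\R)$ a line $\R a_t$ spanned by a K\"ahler class. The goal is to show that this line is constant, equal to $\R a$, so that $\operatorname{Hdg}(a)=\Delta$.

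For that, I would use the canonical class. By \cite[Theorem~2.11]{CY18}, $K_{X_0}$ is ample, so $c_1(K_{X_0})\neq0$ is a positive multiple of $a$ up to the choice of $a$. By the same K\"ahler-hyperbolicity input, $c_1(K_{X_0})$ itself lies in $\mathcal K_0$. The relative canonical bundle $K_{\mathcal X/\Delta}$ gives $c_1(K_{X_t})=c_1(K_{X_0})$, a fixed class of type $(1,1)$ on every fibre.

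It then suffices to check that $c_1(K_{X_0})\in V_{\mathrm K}$ and to apply \cref{cor:kahler-linebundle}. This check is easy: $c_1(K_{X_0})=\lambda a$ for some real $\lambda$, because $H^{1,1}(X_0,\R)=\R a$, and $V_{\mathrm K}$ is a linear subspace containing $a$.

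The delicate point to verify is exactly this proportionality. It uses $h^{1,1}=1$ together with the ampleness of $K_{X_0}$, which gives $\lambda>0$ even though only $\lambda\neq0$ is needed. With that in hand, \cref{cor:kahler-linebundle} yields K\"ahler hyperbolicity of all sufficiently small $X_t$.
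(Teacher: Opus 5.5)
Your final argument is exactly the paper's proof: \cite{CY18} gives ampleness of $K_{X_0}$, the condition $h^{1,1}(X_0)=1$ forces $c_1(K_{X_0})=\lambda a$ with $\lambda>0$, so $c_1(K_{X_0})\in V_{\mathrm K}$, and the canonical-bundle case of \cref{cor:kahler-linebundle} then applies. The earlier attempts through \cref{cor:kahler-transverse} and \cref{cor:kahler-polarized-positive-index}, which you correctly saw would require $V_{\mathrm K}=H^2_{\DR}(X,\R)$, are unnecessary and can be dropped.
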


\begin{proof}
Choose a K\"ahler hyperbolic class $c_0\in\mathcal K_0\cap V_{\mathrm K}$. By \cite[Theorem~2.11]{CY18}, the canonical bundle $K_{X_0}$ is ample, so $c_1(K_{X_0})$ is also a K\"ahler class. Since $h^{1,1}(X_0)=1$, the real Hodge space $H^{1,1}(X_0,\R)$ is one-dimensional. Hence the two K\"ahler classes lie on the same positive ray:
\[
c_1(K_{X_0})=\lambda c_0
\]
for some $\lambda>0$. Because $V_{\mathrm K}$ is a real vector subspace and $c_0\in V_{\mathrm K}$, it follows that $c_1(K_{X_0})\in V_{\mathrm K}$. The canonical-bundle special case of \cref{cor:kahler-linebundle} now gives the conclusion.
\end{proof}

\subsection{Surface restricted-period formulation}\label{subsec:surface-pg}

The all-dimensional results of \cref{subsec:kahler-continuation} describe the degree-two obstruction abstractly. For surfaces, there is an additional concrete feature: the same obstruction can be written as a finite restricted period matrix. Thus assume that $X_0$ is a compact K\"ahler hyperbolic surface and put $p_g=h^{2,0}(X_0)$. The nearby fibres are K\"ahler, so $p_g=h^{2,0}(X_t)$ is locally constant. Let
\[
Q(\alpha,\beta):=\int_X\alpha\wedge\beta
\]
be the intersection form on $H^2_{\DR}(X,\R)$. For every $t$ set
\[
W_t:=\bigl(H^{2,0}(X_t)\oplus H^{0,2}(X_t)\bigr)_{\R}\subset H^2_{\DR}(X,\R).
\]
Then
\[
H^2_{\DR}(X,\R)=\mathcal P_t^{\mathrm K}\oplus W_t,\qquad\mathcal P_t^{\mathrm K}=W_t^{\perp_Q}.
\]
Thus the positive-$p_g$ problem is finite-dimensional: the question is whether the fixed space $V_{\mathrm K}$ continues to meet the moving orthogonal complement $W_t^{\perp_Q}$ near the initial K\"ahler hyperbolic class.

Choose a local holomorphic frame $\Omega_1(t),\ldots,\Omega_{p_g}(t)$ of $H^{2,0}(X_t)$ and define the restricted period map
\begin{equation}\label{eq:restricted-period-map}
\mathsf P_{V,t}:V_{\mathrm K}\longrightarrow\C^{p_g},\qquad c\longmapsto\left(\int_Xc\wedge\Omega_1(t),\ldots,\int_Xc\wedge\Omega_{p_g}(t)\right),
\end{equation}
regarded as a real-linear map. If $e_1,\ldots,e_N$ is a real basis of $V_{\mathrm K}$, this is represented by the restricted period matrix
\[
\left(\int_Xe_\mu\wedge\Omega_j(t)\right)_{1\leq j\leq p_g,\ 1\leq\mu\leq N}.
\]

\begin{proposition}\label{prop:surface-period}
For a K\"ahler surface,
\[
\ker\mathsf P_{V,t}=V_{\mathrm K}\cap\mathcal P_t^{\mathrm K}.
\]
Consequently, local constancy of $\operatorname{rank}_{\R}\mathsf P_{V,t}$ implies
deformation openness of K\"ahler hyperbolicity. In particular, if
\[
\operatorname{rank}_{\R}\mathsf P_{V,0}=2p_g,
\]
then every sufficiently small $X_t$ is K\"ahler hyperbolic.
\end{proposition}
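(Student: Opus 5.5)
The plan is to identify $\ker\mathsf P_{V,t}$ with $V_{\mathrm K}\cap\mathcal P_t^{\mathrm K}$ using the orthogonal splitting already displayed, and then to feed this identification into \cref{thm:kahler-constant-rank} and \cref{cor:kahler-surjective}.

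\emph{Step 1: the kernel.} First I will check that $\mathcal P_t^{\mathrm K}=W_t^{\perp_Q}$ on a K\"ahler surface. By the Hodge decomposition on $X_t$, a real class lies in $\mathcal P_t^{\mathrm K}$ exactly when its $(2,0)$ and $(0,2)$ components vanish. The intersection pairing of $H^{p,q}$ with $H^{p',q'}$ vanishes unless $(p',q')=(2-p,2-q)$, and the pairing of $H^{2,0}$ with $H^{0,2}$ is perfect. Hence a real class $c$ pairs to zero with every $\Omega_j(t)$ if and only if its $(0,2)$-component vanishes. By reality, its $(2,0)$-component then vanishes as well.

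Now let $c\in V_{\mathrm K}$. Then $\mathsf P_{V,t}(c)=0$ means $\int c\wedge\Omega_j(t)=0$ for every $j$, which by the above says $c\in\mathcal P_t^{\mathrm K}$. The real-valued conditions against the $\overline{\Omega_j(t)}$ are the complex conjugates of these, so they add nothing. This proves $\ker\mathsf P_{V,t}=V_{\mathrm K}\cap\mathcal P_t^{\mathrm K}$.

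\emph{Step 2: deformation openness under constant rank.} Suppose $\operatorname{rank}_{\R}\mathsf P_{V,t}$ is locally constant. Since $V_{\mathrm K}$ is fixed, rank--nullity makes $\dim_{\R}(V_{\mathrm K}\cap\mathcal P_t^{\mathrm K})$ locally constant. This is exactly the hypothesis of \cref{thm:kahler-constant-rank}, which then yields K\"ahler hyperbolicity of all nearby fibres. Two points need care here.

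\begin{itemize}
\item The frame $\Omega_j(t)$ should vary smoothly in $t$. This holds because $h^{2,0}$ is locally constant on the K\"ahler fibres, so $H^{2,0}(X_t)$ forms a smooth (indeed holomorphic) subbundle of $H^2_{\DR}(X,\C)$.
\item The rank is independent of the chosen frame, since changing frames composes $\mathsf P_{V,t}$ with an invertible complex matrix.
\end{itemize}

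\emph{Step 3: the full-rank case.} The target $\C^{p_g}$ has real dimension $2p_g$, so $\operatorname{rank}_{\R}\mathsf P_{V,0}=2p_g$ means $\mathsf P_{V,0}$ is surjective. Surjectivity is an open condition for a continuous family of linear maps into a space of fixed dimension. Therefore the rank equals $2p_g$ for all small $t$, and Step 2 applies.

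Alternatively, one can note that $\mathsf P_{V,0}$ and $\mathcal O_0^{\mathrm K}|_{V_{\mathrm K}}$ have the same kernel and targets of the same real dimension $2p_g$. Surjectivity of one is therefore equivalent to surjectivity of the other, and \cref{cor:kahler-surjective} applies directly.

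\emph{Main obstacle.} The only delicate point I expect is Step 1: the Hodge-theoretic identification must be done on $X_t$ itself rather than on $X_0$. This is legitimate because every small $X_t$ is K\"ahler by Kodaira--Spencer stability. Once that is in place, the remaining steps are formal.
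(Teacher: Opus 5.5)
Your proposal is correct and follows essentially the same route as the paper. The kernel formula comes from isolating the $(0,2)$-component and using the non-degenerate $H^{0,2}\times H^{2,0}$ pairing; rank--nullity then feeds into \cref{thm:kahler-constant-rank}, and openness of surjectivity handles the full-rank case. You also note that $\mathsf P_{V,0}$ and $\mathcal O_0^{\mathrm K}|_{V_{\mathrm K}}$ share a kernel and have targets of equal real dimension $2p_g$. This makes explicit the step the paper leaves implicit when it invokes \cref{cor:kahler-surjective}.
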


\begin{proof}
Let $c\in H^2_{\DR}(X,\R)$ and write its Hodge decomposition on $X_t$ as $c=c_t^{2,0}+c_t^{1,1}+c_t^{0,2}$. Since $X_t$ has complex dimension two,
\[
\int_Xc\wedge\Omega_j(t)=\int_Xc_t^{0,2}\wedge\Omega_j(t).
\]
The pairing $H^{0,2}(X_t)\times H^{2,0}(X_t)\to\C$ is non-degenerate. Hence all the periods in \eqref{eq:restricted-period-map} vanish if and only if $c_t^{0,2}=0$, and for real $c$ this is equivalent to $c\in H^{1,1}(X_t,\R)=\mathcal P_t^{\mathrm K}$. This proves the kernel formula. Local constancy of the real rank is therefore equivalent to local constancy of $\dim_{\R}(V_{\mathrm K}\cap\mathcal P_t^{\mathrm K})$, so \cref{thm:kahler-constant-rank} applies. If the rank at $t=0$ is $2p_g$, the map is surjective onto the real vector space underlying $\C^{p_g}$; surjectivity is open, and
\cref{cor:kahler-surjective} applies.
\end{proof}

\section{Lefschetz saturation and topological consequences}
\label{sec:topological-program}

Instead of following a moving pure-type subspace, the second mechanism forces the fixed hyperbolic subspace itself to be as large as possible. The starting point is the known graded-ideal property of hyperbolic cohomology \cite[Remark~4.5]{BKS24}; the contribution of this section is to combine it with Lefschetz surjectivity to obtain the saturation statements below.

Brunnbauer--Kotschick--Sch\"onlinner study hyperbolic cohomology classes in a general topological setting \cite{BKS24}. Fu--Wang--Wu develop higher-degree hyperbolicity notions, including K\"ahler $k$-hyperbolicity, and apply related topological techniques to balanced hyperbolicity and birational questions \cite{FWW26}. For deformation questions, this viewpoint emphasizes that $\Vhyp^{*}(X)$ is attached to the underlying smooth topology and universal covering, whereas the balanced cone and the pure-type spaces move with the complex structure. The proposition below is a short consequence of the known ideal property plus Lefschetz surjectivity; for the present deformation problem, its significance lies in the geometric consequences recorded in \cref{cor:bounded-lefschetz-balanced,cor:KH-balanced-locus,cor:kahler-k-upper}.

\begin{proposition}[Lefschetz saturation by a bounded power]
\label{prop:lefschetz-saturation}
Let $M^{2n}$ be a compact smooth manifold, let $a\in H^2_{\DR}(M,\R)$, and let $r\geq1$. Suppose
\[
 a^r\in\Vhyp^{2r}(M).
\]
For an integer $q$ with $2r\leq q\leq2n$, if
\[
L_a^r:H^{q-2r}_{\DR}(M,\R)\longrightarrow H^q_{\DR}(M,\R),\qquad b\longmapsto a^r\smile b,
\]
is surjective, then
\[
 \Vhyp^q(M)=H^q_{\DR}(M,\R).
\]
If, in addition, $a$ is represented by a symplectic form satisfying hard Lefschetz, the required surjectivity holds whenever
\[
 q\geq n+r.
\]
\end{proposition}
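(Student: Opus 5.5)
The first assertion follows directly from the graded-ideal property in \cref{prop:graded-ideal}. Take any $c\in H^q_{\DR}(M,\R)$. Surjectivity of $L_a^r$ provides $b\in H^{q-2r}_{\DR}(M,\R)$ with $c=a^r\smile b$. Since $a^r\in\Vhyp^{2r}(M)$, \cref{prop:graded-ideal} (with $r$ replaced by $2r$ and $s=q-2r$) gives
\[
a^r\smile b\in\Vhyp^{2r}(M)\smile H^{q-2r}_{\DR}(M,\R)\subseteq\Vhyp^{q}(M).
\]
Hence $H^q_{\DR}\subseteq\Vhyp^q$, and the reverse inclusion is trivial. The only point to check is the degree range: $q-2r\geq0$ by hypothesis. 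When $q=2r$, the domain is $H^0=\R$ and the statement is consistent, since $a^r$ itself lies in $\Vhyp^{2r}$. I would not use the convention $\Vhyp^0=0$ anywhere, because the ideal property is applied with the bounded factor in degree $2r\geq2$.

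For the second assertion, assume $a=[\sigma]$ with $\sigma$ symplectic satisfying hard Lefschetz, so that $L_a^{j}:H^{n-j}\to H^{n+j}$ is an isomorphism for $0\leq j\leq n$. Given $q\geq n+r$, write $q=n+j$ with $j:=q-n\geq r$, and note $j\leq n$ since $q\leq 2n$. Hard Lefschetz makes
\[
L_a^{j}=L_a^{r}\circ L_a^{j-r}:H^{n-j}\longrightarrow H^{n+j}=H^q
\]
surjective. The image of this composite is contained in $L_a^r\bigl(H^{q-2r}\bigr)$, because $L_a^{j-r}$ maps $H^{n-j}$ into $H^{n-j+2(j-r)}=H^{q-2r}$. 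Therefore $L_a^r:H^{q-2r}\to H^q$ is surjective, and the first part applies.

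The only step that needs any care is this factorization. One must confirm the degree bookkeeping $n-j+2(j-r)=q-2r$, and that the range condition $r\leq j\leq n$ is exactly what $n+r\leq q\leq 2n$ provides. Beyond that, everything is formal from \cref{prop:graded-ideal}.
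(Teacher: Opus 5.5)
Your proof is correct. The first part is the paper's argument verbatim: surjectivity of $L_a^r$ combined with the graded-ideal property of \cref{prop:graded-ideal}.

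For the hard Lefschetz part, your route is the mirror image of the paper's.

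\begin{itemize}
\item \textbf{Your argument.} You work directly in the target degree. Writing $q=n+j$ with $r\le j\le n$, the isomorphism $L_a^j=L_a^r\circ L_a^{j-r}\colon H^{2n-q}\to H^q$ has $L_a^r$ as its last factor. Hence $L_a^r\colon H^{q-2r}\to H^q$ is onto.
\item \textbf{The paper's argument.} The paper first dualizes. By Poincar\'e duality, surjectivity of $L_a^r\colon H^{q-2r}\to H^q$ is equivalent to injectivity of $L_a^r\colon H^{p}\to H^{p+2r}$, where $p=2n-q\le n-r$. Injectivity then follows because $L_a^{n-p}=L_a^{n-r-p}\circ L_a^r$ is an isomorphism on $H^p$, with $L_a^r$ as its first factor.
\end{itemize}

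Both arguments use exactly the same hard Lefschetz input. Yours is slightly more economical: it avoids the duality step, and with it the fact that $L_a^r$ is adjoint to itself under the Poincar\'e pairing.
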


\begin{proof}
By \cref{prop:graded-ideal}, for every $b\in H^{q-2r}_{\DR}(M,\R)$ one has
\[
 a^r\smile b\in\Vhyp^q(M).
\]
Surjectivity therefore gives $H^q_{\DR}(M,\R)\subseteq\Vhyp^q(M)$, while the reverse inclusion is tautological.

For the last assertion, write $m=q-2r$. Surjectivity of $L_a^r:H^m\to H^{m+2r}$ is dual, under Poincar\'e duality, to injectivity of
\[
 L_a^r:H^{2n-m-2r}_{\DR}(M,\R)\longrightarrow H^{2n-m}_{\DR}(M,\R).
\]
If $q=m+2r\geq n+r$, then $p:=2n-m-2r=2n-q\leq n-r$. For $\alpha\in H^p_{\DR}(M,\R)$ with $a^r\smile\alpha=0$, multiplication by $a^{n-r-p}$ gives
\[
 a^{n-p}\smile\alpha=0.
\]
The hard Lefschetz theorem makes $L_a^{n-p}:H^p_{\DR}(M,\R)\to H^{2n-p}_{\DR}(M,\R)$ an isomorphism, so $\alpha=0$. Thus the dual map is injective and $L_a^r:H^m\to H^{m+2r}$ is surjective.
\end{proof}

Applied to \cref{q:balanced-locus}, this yields:

\begin{corollary}\label{cor:bounded-lefschetz-balanced}
Let $M^{2n}$ be a compact smooth manifold with $n\geq3$. Suppose that there are $a\in H^2_{\DR}(M,\R)$ and an integer $1\leq r\leq n-2$ such that $a^r\in\Vhyp^{2r}(M)$ and $a$ is represented by a symplectic form satisfying hard Lefschetz. Then
\[
 \Vhyp^{2n-2}(M)=H^{2n-2}_{\DR}(M,\R).
\]
Consequently every balanced complex structure on $M$ is balanced hyperbolic.
\end{corollary}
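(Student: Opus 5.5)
The plan is to apply \cref{prop:lefschetz-saturation} with $q=2n-2$ and then \cref{prop:intersection}.

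First, the degree condition. The last clause of \cref{prop:lefschetz-saturation} requires
\[
q\geq n+r,\qquad\text{i.e.}\qquad 2n-2\geq n+r,
\]
which is exactly $r\leq n-2$. This is the standing hypothesis. We also need $2r\leq q=2n-2$, i.e.\ $r\leq n-1$, which holds. Since $a$ is represented by a symplectic form satisfying hard Lefschetz, the proposition gives surjectivity of
\[
L_a^r:H^{2n-2r-2}_{\DR}(M,\R)\to H^{2n-2}_{\DR}(M,\R).
\]
Together with $a^r\in\Vhyp^{2r}(M)$ and the graded-ideal property (\cref{prop:graded-ideal}), the first part of \cref{prop:lefschetz-saturation} then yields $\Vhyp^{2n-2}(M)=H^{2n-2}_{\DR}(M,\R)$.

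Second, the balanced conclusion. Let $J$ be any balanced complex structure on $M$, with balanced metric $\omega$. Then $\omega^{n-1}$ is a strictly positive, $d$-closed real $(n-1,n-1)$-form, so its class lies in $\cB_J$. That class lies in $H^{2n-2}_{\DR}(M,\R)=\Vhyp^{2n-2}(M)$. Hence $\cB_J\cap\Vhyp^{2n-2}(M)\neq\varnothing$, and \cref{prop:intersection} (equivalently \cref{lem:cohomological}) shows that $\omega$ itself is balanced hyperbolic.

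There is no real obstacle; the only point needing care is the degree bookkeeping. The borderline case $r=n-1$ fails because the condition $q\geq n+r$ would then require $2n-2\geq 2n-1$, which is false. One should also note that $\cB_J$ was defined via a family $J_t$, but its definition and the statement of \cref{prop:intersection} apply verbatim to a single complex structure on $M$.
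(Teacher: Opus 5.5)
Your proof is correct and follows the paper's argument: you apply \cref{prop:lefschetz-saturation} with $q=2n-2$, using that $2n-2\geq n+r$ is equivalent to $r\leq n-2$, and then conclude via \cref{prop:intersection}, since every balanced class lies in $\Vhyp^{2n-2}(M)=H^{2n-2}_{\DR}(M,\R)$. Your extra checks (that $2r\leq q$, and that the definition of $\cB$ applies to a single complex structure) make explicit bookkeeping that the paper leaves implicit.
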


\begin{proof}
Apply \cref{prop:lefschetz-saturation} with $q=2n-2$. The inequality $2n-2\geq n+r$ is exactly $r\leq n-2$, so the required multiplication map is surjective. Thus $\Vhyp^{2n-2}(M)=H^{2n-2}_{\DR}(M,\R)$. For any balanced complex structure on $M$, every class in its balanced cone therefore lies in $\Vhyp^{2n-2}(M)$, and \cref{prop:intersection} gives balanced hyperbolicity.
\end{proof}

\begin{corollary}[=\cref{t14}]
\label{cor:KH-balanced-locus}
Let $X_0=(X,J_0)$ be a compact K\"ahler $k$-hyperbolic manifold of complex dimension $n\geq3$, where
\[
 1\leq k\leq n-2.
\]
Then
\[
 \Vhyp^{2n-2}(X)=H^{2n-2}_{\DR}(X,\R)
\]
for the underlying smooth manifold $X$. Consequently every balanced complex structure on $X$ is balanced hyperbolic. In particular, every sufficiently small holomorphic deformation $X_t$ of $X_0$ is balanced hyperbolic.
\end{corollary}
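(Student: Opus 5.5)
The plan is to reduce the statement directly to \cref{cor:bounded-lefschetz-balanced}, applied to the underlying smooth manifold $M=X$ with $a=[\omega_0]_{\DR}$ and $r=k$.

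\textbf{Hypotheses of \cref{cor:bounded-lefschetz-balanced}.} Let $\omega_0$ be the K\"ahler form on $X_0$ for which $\omega_0^k$ is $\widetilde d$-bounded, as in the definition of K\"ahler $k$-hyperbolicity. Then:
\begin{enumerate}[label=\textup{(\roman*)}]
\item Since $\omega_0^k$ is a closed $\widetilde d$-bounded form representing $a^k$, we have $a^k=[\omega_0^k]_{\DR}\in\Vhyp^{2k}(X)$. By \cref{lem:cohomological} this membership depends only on the class.
\item The form $\omega_0$ is a symplectic form on $X$.
\item The classical hard Lefschetz theorem on the compact K\"ahler manifold $X_0$ shows that $\omega_0$ satisfies hard Lefschetz in de Rham cohomology.
\end{enumerate}
The numerical hypotheses $n\geq3$ and $1\leq k\leq n-2$ are exactly those required there. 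Hence $\Vhyp^{2n-2}(X)=H^{2n-2}_{\DR}(X,\R)$, and every balanced complex structure on $X$ is balanced hyperbolic. For this last step, by \cref{prop:intersection} a balanced structure has $\cB\neq\varnothing$, and $\cB\subset H^{2n-2}_{\DR}=\Vhyp^{2n-2}$, so the intersection is non-empty.

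\textbf{Nearby fibres.} By Kodaira--Spencer stability \cite{KS60}, after shrinking $\Delta$ every $X_t$ is K\"ahler. For a K\"ahler form $\omega_t$ on $X_t$, the form $\omega_t^{n-1}$ is $d$-closed and strictly positive, so $X_t$ is balanced. After the Ehresmann identification, each $J_t$ is a balanced complex structure on the same smooth manifold $X$. The universal cover is also fixed, so $\Vhyp^{2n-2}(X)$ is unchanged. The previous step therefore shows that each $X_t$ is balanced hyperbolic.

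\textbf{Possible obstacle.} The one point needing care is the saturation inequality. The proof of \cref{prop:lefschetz-saturation} requires surjectivity of $L_a^k:H^{2n-2-2k}\to H^{2n-2}$, and this surjectivity holds because $2n-2\geq n+k\iff k\leq n-2$. The borderline case $k=n-1$ is excluded, consistent with the remark following \cref{thm:kahler-k-upper}. Beyond this check there is no analytic difficulty: the whole argument is topological once the ideal property (\cref{prop:graded-ideal}) is available.
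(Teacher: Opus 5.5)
Your proposal is correct and follows the paper's own proof essentially verbatim. Both take $a=[\omega_0]_{\DR}$ with $a^k\in\Vhyp^{2k}(X)$ and use hard Lefschetz for the K\"ahler class to apply \cref{cor:bounded-lefschetz-balanced} with $r=k$. Both then use Kodaira--Spencer stability to see that the nearby fibres are K\"ahler, hence balanced, hence balanced hyperbolic.
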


\begin{proof}
Choose a K\"ahler form $\omega$ on $X_0$ such that $\omega^k$ is $\widetilde d$-bounded, and put $a=[\omega]_{\DR}$. Then $a^k\in\Vhyp^{2k}(X)$. The K\"ahler class $a$ satisfies hard Lefschetz, so \cref{cor:bounded-lefschetz-balanced} applies with $r=k$. Finally, small deformations of a compact K\"ahler manifold remain K\"ahler \cite{KS60}; hence the nearby fibres are balanced and the preceding assertion applies.
\end{proof}

This saturation also gives a partial answer to the K\"{a}hler hyperbolic deformation problem.

\begin{corollary}\label{cor:kahler-k-upper}
Let $X_0=(X,J_0)$ be a compact K\"ahler $r$-hyperbolic manifold of complex dimension $n$. Let $\ell$ be an integer satisfying
\[
 r\leq\ell\leq n,\qquad 2\ell\geq n+r.
\]
Then
\[
 \Vhyp^{2\ell}(X)=H^{2\ell}_{\DR}(X,\R).
\]
Consequently every K\"ahler complex structure on $X$ is K\"ahler $\ell$-hyperbolic. In particular, every sufficiently small holomorphic deformation $X_t$ of $X_0$ is K\"ahler $\ell$-hyperbolic.
\end{corollary}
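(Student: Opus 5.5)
The plan mirrors the proof of \cref{cor:KH-balanced-locus}, with $q=2\ell$ in place of $q=2n-2$. First, choose a K\"ahler form $\omega$ on $X_0$ such that $\omega^r$ is $\widetilde d$-bounded, and set $a=[\omega]_{\DR}\in H^2_{\DR}(X,\R)$. Then $a^r\in\Vhyp^{2r}(X)$. Since $a$ is a K\"ahler class on the compact K\"ahler manifold $X_0$, it is represented by a symplectic form satisfying hard Lefschetz. We then apply \cref{prop:lefschetz-saturation} with $q=2\ell$. The constraints $2r\leq q\leq 2n$ follow from $r\leq\ell\leq n$, and the surjectivity threshold $q\geq n+r$ is exactly the hypothesis $2\ell\geq n+r$. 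Hence $\Vhyp^{2\ell}(X)=H^{2\ell}_{\DR}(X,\R)$.

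Next, we pass from the cohomological statement to K\"ahler $\ell$-hyperbolicity. Let $J$ be any K\"ahler complex structure on $X$ with K\"ahler form $\omega'$. The class $[\omega'^{\,\ell}]_{\DR}$ lies in $H^{2\ell}_{\DR}(X,\R)=\Vhyp^{2\ell}(X)$. By \cref{lem:cohomological}, $\widetilde d$-boundedness depends only on the de Rham class, so $\omega'^{\,\ell}$ itself is $\widetilde d$-bounded. Thus $(X,J)$ is K\"ahler $\ell$-hyperbolic in the sense of \cite[Definition~2.3]{FWW26}. The universal cover and the lifted metrics are fixed by the smooth structure, and all base metrics are uniformly equivalent, so no dependence on $J$ enters.

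Finally, by the Kodaira--Spencer stability theorem \cite{KS60}, after shrinking $\Delta$ every $X_t$ is K\"ahler. Under the Ehresmann identification $X_t=(X,J_t)$, the previous paragraph applies to each $J_t$. This proves the statement, and with it \cref{thm:kahler-k-upper}.

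There is no real obstacle; the only point needing care is the index bookkeeping in \cref{prop:lefschetz-saturation}. One must check that $q-2r=2\ell-2r\geq0$ so that the source degree makes sense, which follows from $\ell\geq r$. One must also note that the edge case $\ell=n$ is consistent: there $2n\geq n+r$ holds trivially, and top-degree saturation is automatic.
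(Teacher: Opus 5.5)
Your proposal is correct and follows the paper's proof essentially step for step. You bound $a^r$ for a K\"ahler class $a$, apply \cref{prop:lefschetz-saturation} with $q=2\ell$ (hard Lefschetz supplies the surjectivity), transfer the conclusion to an arbitrary K\"ahler structure via \cref{lem:cohomological}, and use Kodaira--Spencer stability for the nearby fibres. One small wording point: your remark that top-degree saturation is ``automatic'' should be read as following from the hypothesis, since $a^n=a^r\smile a^{n-r}\neq0$ lies in $\Vhyp^{2n}(X)$; it does not hold unconditionally, and fails for example on complex tori.
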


\begin{proof}
Choose a K\"ahler form $\alpha$ on $X_0$ such that $\alpha^r$ is $\widetilde d$-bounded, and put $a=[\alpha]_{\DR}$. Then $a^r\in\Vhyp^{2r}(X)$. Apply \cref{prop:lefschetz-saturation} with $q=2\ell$. Since $2\ell\geq n+r$, hard Lefschetz gives surjectivity of
\[
 L_a^r:H^{2\ell-2r}_{\DR}(X,\R)\longrightarrow H^{2\ell}_{\DR}(X,\R),
\]
so $\Vhyp^{2\ell}(X)=H^{2\ell}_{\DR}(X,\R)$.

Now let $J$ be any K\"ahler complex structure on the same smooth manifold and let $\omega$ be a K\"ahler form for $J$. Then $[\omega^\ell]_{\DR}\in H^{2\ell}_{\DR}(X,\R)=\Vhyp^{2\ell}(X)$. By \cref{lem:cohomological}, $\omega^\ell$ is itself $\widetilde d$-bounded, so $\omega$ is K\"ahler $\ell$-hyperbolic. Finally, small deformations of $X_0$ remain K\"ahler \cite{KS60}.
\end{proof}

\begin{remark}[Sharp range for this saturation argument]\label{rem:kahler-k-ranges}
For a K\"ahler $r$-hyperbolic central fibre, the preceding argument controls the degrees $2\ell$ satisfying $2\ell\geq n+r$. When $2\ell<n+r$, hard Lefschetz does not force
$L_a^r:H^{2\ell-2r}\to H^{2\ell}$ to be surjective, so the ideal argument alone does not saturate all of $H^{2\ell}_{\DR}(X,\R)$.

In particular, to deduce balanced hyperbolicity one takes $\ell=n-1$, and the condition becomes $r\leq n-2$. If $r=n-1$, K\"ahler $(n-1)$-hyperbolicity only gives the single bounded class $a^{n-1}\in\Vhyp^{2n-2}(X)$; it does not in general imply $\Vhyp^{2n-2}(X)=H^{2n-2}_{\DR}(X,\R)$. Indeed, the relevant map
\[
 L_a^{n-1}:H^0_{\DR}(X,\R)\longrightarrow H^{2n-2}_{\DR}(X,\R)
\]
has one-dimensional image and is surjective only in the exceptional case $b_{2n-2}(X)=1$. Thus the present method does not extend \cref{t14} to $r=n-1$ without an additional hypothesis. On the other hand, whenever the degree-two continuation or transversality criteria of \cref{sec:kahler-program} prove ordinary K\"ahler hyperbolicity, K\"ahler $\ell$-hyperbolicity for every $1\leq\ell\leq n$ is automatic.
\end{remark}

\section{A bounded correction problem on the universal cover}
\label{sec:universal-program}

\subsection{Motivation}

Fu--Yau proved persistence of balanced structures under their weak $(n-1,n)$-$\ddbar$ condition \cite[Theorem~6]{FY11}. Under the corresponding hypotheses, Khelifati proves that the nearby balanced metrics obtained from a balanced hyperbolic central metric are Gauduchon hyperbolic \cite[Theorem~2.15(1)]{Khe26}; equivalently, their lifted $(n-1,n-1)$ powers admit bounded $(\partial+\dbar)$-potentials of the appropriate bidegrees. We use this result only as motivation. The remaining analytic question is whether such a potential can be corrected to a bounded $d$-primitive. Khelifati formulates
\[
\text{balanced and Gauduchon hyperbolic}\quad\Longrightarrow\quad\text{balanced hyperbolic}
\]
as Conjecture~2.20 and explains that the required complete-manifold equations with
$L^{\infty}$-control are not presently available \cite[Conjecture~2.20]{Khe26}.

\Cref{prop:top-row} reduces the problem to one bounded top-row $\partial$-equation. Two qualifications are essential. First, the assertion is existential with respect to the bounded potential: one correctable admissible potential suffices, even though another potential for the same form may fail. Second, boundedness of a potential does not control its first derivatives, so $\gamma\in A_{\mathrm b}^{p,q}$ does not imply $\dbar\gamma\in L^{\infty}$. The results below make both points explicit, replace the set-valued obstruction by a canonical quotient class, and isolate additional hypotheses under which the top-row equation is solvable.

\subsection{The bounded top-row equation}
\label{subsec:top-row}

Let $A^{p,q}_{\mathrm b}(\widetilde X)$ denote the vector space of smooth $(p,q)$-forms
whose pointwise norm is bounded for a lifted Hermitian metric. This notation imposes no boundedness condition on their derivatives. For $n\geq3$, define the bounded-primitive
quotient
\begin{equation}\label{eq:bounded-quotient}
\mathcal Q^{n-2,n}_{\partial,\mathrm b}(\widetilde X):=\frac{\ker\bigl(\partial:A^{n-2,n}(\widetilde X)\to A^{n-1,n}(\widetilde X)\bigr)}{\partial A^{n-3,n}_{\mathrm b}(\widetilde X)}.
\end{equation}
The numerator is not required to consist of bounded forms: the quotient records whether
a given $\partial$-closed form admits a bounded $\partial$-primitive.

\begin{proposition}\label{prop:top-row}
Let $\Omega$ be a real, $d$-closed $(n-1,n-1)$-form on a compact complex $n$-fold $X$.
Suppose its lift to the universal cover admits a bounded $(\partial+\dbar)$-potential. After symmetrizing, this means that one can write
\begin{equation}\label{eq:symmetric-GH}
\widetilde\Omega=\partial\gamma+\dbar\bar\gamma,\qquad \gamma\in A^{n-2,n-1}_{\mathrm b}(\widetilde X).
\end{equation}
If $n\geq3$, then $\widetilde\Omega$ has a bounded real $d$-primitive if and only if one
can choose $\gamma$ in \eqref{eq:symmetric-GH} and find $u\in A^{n-3,n}_{\mathrm b}(\widetilde X)$ such that
\begin{equation}\label{eq:top-row}
\partial u=-\dbar\gamma.
\end{equation}
Equivalently, zero belongs to
\[
\mathfrak G_{\infty}(\widetilde\Omega):=\left\{[\dbar\gamma]\in\mathcal Q^{n-2,n}_{\partial,\mathrm b}(\widetilde X):
\begin{array}{l}
\gamma\in A^{n-2,n-1}_{\mathrm b}(\widetilde X),\\[-2pt]
\widetilde\Omega=\partial\gamma+\dbar\bar\gamma
\end{array}\right\}.
\]
For $n=2$, a bounded real $d$-primitive exists if and only if there is a bounded $\gamma\in A^{0,1}(\widetilde X)$ satisfying \eqref{eq:symmetric-GH} and $\dbar\gamma=0$.
\end{proposition}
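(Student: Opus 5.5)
The plan is a direct bidegree decomposition of a putative bounded primitive on $\widetilde X$. No analysis is needed. The only points requiring care are that type projections preserve pointwise boundedness, and the bookkeeping of which bidegrees occur.

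\emph{Forward direction.} Suppose $\widetilde\Omega=d\Gamma$ with $\Gamma$ bounded. Replacing $\Gamma$ by its real part, we may assume $\Gamma$ is real, since $\widetilde\Omega$ is real. For $n\geq3$ a real $(2n-3)$-form on an $n$-fold has components only in bidegrees $(n-3,n)$, $(n-2,n-1)$ and their conjugates. Write
\[
\Gamma=u+\gamma+\bar\gamma+\bar u,\qquad u\in A^{n-3,n},\ \gamma\in A^{n-2,n-1}.
\]
The type projections are pointwise orthogonal for the lifted Hermitian metric, so $u,\gamma\in A_{\mathrm b}$. Since $\dbar u=0$ for bidegree reasons, expanding $d\Gamma$ leaves only three bidegrees:
\begin{itemize}
\item the $(n-1,n-1)$-component gives $\widetilde\Omega=\partial\gamma+\dbar\bar\gamma$, which is \eqref{eq:symmetric-GH} for this $\gamma$;
\item the $(n-2,n)$-component gives $0=\partial u+\dbar\gamma$, which is \eqref{eq:top-row};
\item the $(n,n-2)$-component is the conjugate of the $(n-2,n)$-component.
\end{itemize}
So the $\gamma$ extracted from $\Gamma$ is an admissible potential whose top-row equation is solved by the bounded form $u$.

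\emph{Converse.} Given bounded $\gamma$ satisfying \eqref{eq:symmetric-GH} and bounded $u$ satisfying \eqref{eq:top-row}, set $\Gamma:=u+\gamma+\bar\gamma+\bar u$. This form is real and bounded. The same bidegree computation, run backwards, gives $d\Gamma=\widetilde\Omega$.

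\emph{Reformulation via $\mathfrak G_\infty$.} First check that $[\dbar\gamma]$ lies in the numerator of \eqref{eq:bounded-quotient}, i.e.\ that $\dbar\gamma$ is $\partial$-closed. Using \eqref{eq:symmetric-GH},
\[
\partial\dbar\gamma=-\dbar\partial\gamma=-\dbar\bigl(\widetilde\Omega-\dbar\bar\gamma\bigr)=-\dbar\widetilde\Omega=0,
\]
because $d\widetilde\Omega=0$ and $\widetilde\Omega$ has pure type, so $\dbar\widetilde\Omega=0$. Then $[\dbar\gamma]=0$ in $\mathcal Q^{n-2,n}_{\partial,\mathrm b}$ means $\dbar\gamma=\partial u'$ for some bounded $u'$. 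Taking $u=-u'$ gives \eqref{eq:top-row}. Hence ``$0\in\mathfrak G_\infty(\widetilde\Omega)$'' is a restatement of the existence of a correctable admissible potential.

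\emph{Case $n=2$.} A real $1$-form decomposes as $\Gamma=\gamma+\bar\gamma$ with $\gamma\in A^{0,1}_{\mathrm b}$; the $(n-3,n)$ slot does not exist. The $(1,1)$-component of $d\Gamma$ gives \eqref{eq:symmetric-GH}. The $(0,2)$-component gives $\dbar\gamma=0$, and the $(2,0)$-component is its conjugate. Conversely, such a $\gamma$ produces $\Gamma=\gamma+\bar\gamma$ with $d\Gamma=\widetilde\Omega$.

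\emph{Main obstacle.} I do not expect a genuine obstacle. The only places to be careful are:
\begin{itemize}
\item realness of the primitive, handled by taking the real part;
\item boundedness of type components, handled by the pointwise orthogonality of the bidegree splitting;
\item the observation that the statement is existential in $\gamma$: only the particular $\gamma$ read off from $\Gamma$ is shown to be correctable, not an arbitrary prescribed potential.
\end{itemize}
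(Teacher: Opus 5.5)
Your proof is correct and is essentially the paper's argument: split a bounded real primitive as $u+\gamma+\bar\gamma+\bar u$, read off the $(n-1,n-1)$- and $(n-2,n)$-components, run the same computation backwards for the converse, and check $\partial\dbar\gamma=0$ from the vanishing of $\dbar\widetilde\Omega$ (the $(n-1,n)$-part of $d\widetilde\Omega$). The one point the paper makes explicit that you do not prove is the ``after symmetrizing'' clause: a bounded potential $\widetilde\Omega=\partial\alpha+\dbar\beta$ becomes one of the form $\widetilde\Omega=\partial\gamma+\dbar\bar\gamma$ by averaging with the conjugate identity and taking $\gamma=\tfrac12(\alpha+\bar\beta)$; this step is not needed for the equivalence itself.
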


\begin{proof}
We first make the symmetrization explicit. If a bounded $(\partial+\dbar)$-potential is initially written
\[
\widetilde\Omega=\partial\alpha+\dbar\beta,\qquad\alpha\in A^{n-2,n-1}_{\mathrm b}(\widetilde X),\quad\beta\in A^{n-1,n-2}_{\mathrm b}(\widetilde X),
\]
then reality of $\widetilde\Omega$ gives the conjugate identity as well. Hence, with
$\gamma=\tfrac12(\alpha+\bar\beta)$, averaging the two identities yields $\widetilde\Omega=\partial\gamma+\dbar\bar\gamma$, and $\gamma$ is bounded.

Because $d\widetilde\Omega=0$, the $(n-1,n)$-component of $d\widetilde\Omega$ gives $\partial\dbar\gamma=0$, so $\dbar\gamma$ represents a class in \eqref{eq:bounded-quotient}. If \eqref{eq:top-row} has a bounded solution, set
\[
\Gamma=u+\gamma+\bar\gamma+\bar u.
\]
The $(n-2,n)$-component of $d\Gamma$ is $\partial u+\dbar\gamma=0$, and its conjugate $(n,n-2)$-component also vanishes. The $(n-1,n-1)$-component is $\partial\gamma+\dbar\bar\gamma=\widetilde\Omega$. Hence $d\Gamma=\widetilde\Omega$, and $\Gamma$ is bounded.

Conversely, suppose $d\Gamma=\widetilde\Omega$ for a bounded real $(2n-3)$-form $\Gamma$. For $n\geq3$, its only possible type components are
\[
\Gamma=u+\gamma+\bar\gamma+\bar u,\qquad u\in A^{n-3,n},\quad \gamma\in A^{n-2,n-1},
\]
and all components are bounded. Comparing the $(n-1,n-1)$- and $(n-2,n)$-components of
$d\Gamma=\widetilde\Omega$ gives \eqref{eq:symmetric-GH} and \eqref{eq:top-row}. When $n=2$, the $u$-components do not exist, so the off-type equation reduces to $\dbar\gamma=0$.
\end{proof}

\begin{proposition}[Affine and choice-independent obstruction]\label{prop:affine-obstruction}
Assume $n\geq3$ and fix one bounded symmetric potential $\gamma_0$ satisfying \eqref{eq:symmetric-GH}. Regard $\mathcal Q^{n-2,n}_{\partial,\mathrm b}(\widetilde X)$ as a real vector space, and define the real vector space of bounded null-potentials
\[
\mathcal N_{\mathrm b}:=\left\{\delta\in A^{n-2,n-1}_{\mathrm b}(\widetilde X):\partial\delta+\dbar\bar\delta=0\right\}
\]
and the real-linear map
\[
\Phi:\mathcal N_{\mathrm b}\longrightarrow\mathcal Q^{n-2,n}_{\partial,\mathrm b}(\widetilde X),\qquad\Phi(\delta):=[\dbar\delta].
\]
Then
\begin{equation}\label{eq:affine-obstruction-set}
\mathfrak G_{\infty}(\widetilde\Omega)=[\dbar\gamma_0]+\operatorname{Im}\Phi.
\end{equation}
Consequently,
\begin{equation}\label{eq:canonical-obstruction}
\operatorname{Ob}_{\infty}(\widetilde\Omega):=\bigl[[\dbar\gamma_0]\bigr]\in\frac{\mathcal Q^{n-2,n}_{\partial,\mathrm b}(\widetilde X)}{\operatorname{Im}\Phi}
\end{equation}
is independent of the choice of $\gamma_0$, and
\[
\widetilde\Omega\text{ has a bounded real }d\text{-primitive}\quad\Longleftrightarrow\quad\operatorname{Ob}_{\infty}(\widetilde\Omega)=0.
\]
\end{proposition}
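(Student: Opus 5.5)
The plan is to show directly that the set of admissible bounded symmetric potentials is an affine space modelled on $\mathcal N_{\mathrm b}$, and then push this structure forward along the linear map $\gamma\mapsto[\dbar\gamma]$.

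First, I will check that $\Phi$ is well defined and real-linear. For $\delta\in\mathcal N_{\mathrm b}$, the $(n-1,n)$-component of $d(\partial\delta+\dbar\bar\delta)=0$ gives $\partial\dbar\delta=0$. The argument is the same one used in \cref{prop:top-row}: $\partial\dbar\bar\delta$ has type $(n,n-1)$, so only $\dbar\partial\delta$ survives in that bidegree. Hence $\dbar\delta$ is $\partial$-closed and defines a class in $\mathcal Q^{n-2,n}_{\partial,\mathrm b}(\widetilde X)$. The defining condition of $\mathcal N_{\mathrm b}$ is real-linear but not complex-linear, because of $\bar\delta$. So $\mathcal N_{\mathrm b}$ is a real subspace, and $\Phi$ is real-linear; this is why both spaces are regarded as real vector spaces.

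Second, I will prove the affine description \eqref{eq:affine-obstruction-set}. If $\gamma$ is any bounded potential with $\widetilde\Omega=\partial\gamma+\dbar\bar\gamma$, then $\delta:=\gamma-\gamma_0$ is bounded and satisfies $\partial\delta+\dbar\bar\delta=0$. Therefore $\delta\in\mathcal N_{\mathrm b}$ and $[\dbar\gamma]=[\dbar\gamma_0]+\Phi(\delta)$. Conversely, for any $\delta\in\mathcal N_{\mathrm b}$, the form $\gamma_0+\delta$ is a bounded symmetric potential, so $[\dbar\gamma_0]+\Phi(\delta)\in\mathfrak G_\infty(\widetilde\Omega)$. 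This gives equality of sets.

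Third, I will derive choice-independence and the criterion. Any other choice $\gamma_0'$ differs from $\gamma_0$ by an element of $\mathcal N_{\mathrm b}$, so $[\dbar\gamma_0']-[\dbar\gamma_0]\in\operatorname{Im}\Phi$, and the class in the quotient is unchanged. By \cref{prop:top-row}, a bounded real $d$-primitive exists if and only if $0\in\mathfrak G_\infty(\widetilde\Omega)$. By the affine description, this holds if and only if $[\dbar\gamma_0]\in\operatorname{Im}\Phi$, that is, if and only if $\operatorname{Ob}_\infty(\widetilde\Omega)=0$.

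The only point needing care is the first step: verifying $\partial$-closedness of $\dbar\delta$, and being consistent that linearity holds over $\R$ only. Everything else is formal bookkeeping on top of \cref{prop:top-row}.
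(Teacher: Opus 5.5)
Your proposal is correct and follows the paper's proof essentially step for step. You check well-definedness of $\Phi$ via $\partial\dbar\delta=0$; the paper obtains this by applying $\dbar$ to $\partial\delta+\dbar\bar\delta=0$, which is the same bidegree computation as yours. You then write every bounded symmetric potential as $\gamma_0+\delta$ with $\delta\in\mathcal N_{\mathrm b}$, and deduce the criterion from \cref{prop:top-row}, using that $\operatorname{Im}\Phi$ is a real subspace.
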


\begin{proof}
If $\delta\in\mathcal N_{\mathrm b}$, applying $\dbar$ to $\partial\delta+\dbar\bar\delta=0$ gives $\partial\dbar\delta=0$. Hence $[\dbar\delta]$ is defined in
\eqref{eq:bounded-quotient}, so $\Phi$ is well-defined.

Every other bounded symmetric potential has the form $\gamma_0+\delta$ with $\delta\in\mathcal N_{\mathrm b}$, and every such sum is again a bounded symmetric
potential. Therefore
\[
[\dbar(\gamma_0+\delta)]=[\dbar\gamma_0]+\Phi(\delta),
\]
which proves \eqref{eq:affine-obstruction-set}. Replacing $\gamma_0$ by another potential changes $[\dbar\gamma_0]$ by an element of $\operatorname{Im}\Phi$; hence \eqref{eq:canonical-obstruction} is choice-independent. Finally, \cref{prop:top-row} says that a bounded real $d$-primitive exists exactly when $0\in\mathfrak G_{\infty}(\widetilde\Omega)$, which is equivalent to the vanishing of the quotient class.
\end{proof}

The phrase "one can choose $\gamma$" in \cref{prop:top-row} is essential. The proposition does not assert that \eqref{eq:top-row} is solvable for every bounded potential representing $\widetilde\Omega$. Moreover, the definition of $A^{p,q}_{\mathrm b}$ controls only the pointwise norm of $\gamma$, not that of $\dbar\gamma$. Thus an $L^{\infty}$ right inverse for $\partial$ cannot be applied without an additional estimate on the actual datum $-\dbar\gamma$. The next proposition shows that a prescribed bounded potential can be obstructed.

\begin{proposition}\label{prop:prescribed-potential-failure}
There exist a compact complex threefold $X$, a balanced hyperbolic metric $\omega$ on
$X$, and two bounded symmetric potentials $\gamma_0,\gamma_1\in
A^{1,2}_{\mathrm b}(\widetilde X)$ for the same form $\widetilde\omega^{\,2}$ such that
\eqref{eq:top-row} has a bounded solution for $\gamma_0$ but has no bounded solution
for $\gamma_1$.
\end{proposition}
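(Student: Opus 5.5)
The plan is to take the simplest balanced hyperbolic threefold available and build $\gamma_1$ from $\gamma_0$ by adding a bounded null-potential $\delta\in\mathcal N_{\mathrm b}$ whose class $\Phi(\delta)$ is non-zero. Concretely, let $X=C_1\times C_2\times C_3$ be a product of compact curves of genus at least two. By \cref{lem:curves-KH} it carries a K\"ahler hyperbolic form $\omega=\sum\kappa_j$, and hence $\omega$ is balanced hyperbolic. The universal cover is $\widetilde X=\D^3$ with the product Poincar\'e metric, so that $|d\bar z^j|\asymp 1-|z_j|^2$.

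\textbf{Step 1: the good potential $\gamma_0$.} Take a bounded real primitive of $\widetilde\omega^{\,2}$, for instance the one obtained from $\theta=\sum\theta_j$ as $\Gamma=2\theta\wedge\widetilde\omega$. Decompose it by type as $\Gamma=u_0+\gamma_0+\bar\gamma_0+\bar u_0$. By the converse direction in the proof of \cref{prop:top-row}, $\gamma_0$ is a bounded symmetric potential and $u_0$ is a bounded solution of \eqref{eq:top-row} for $\gamma_0$. (Here in fact $u_0=0$ and $\dbar\gamma_0=0$, since $\theta$ has type $(1,0)+(0,1)$.)

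\textbf{Step 2: a family of null-potentials.} For $\eta\in A^{0,2}(\widetilde X)$ set $\delta=\partial\eta\in A^{1,2}$. Then $\partial\delta=0$, and $\dbar\bar\delta=\dbar\dbar\bar\eta=0$, so $\delta\in\mathcal N_{\mathrm b}$ as soon as $\partial\eta$ is bounded. Put $\gamma_1=\gamma_0+\delta$. Since $\dbar\delta=-\partial\dbar\eta$, the top-row equation $\partial u=-\dbar\gamma_1$ becomes
\[
\partial u=\partial\dbar\eta .
\]
This is solvable with bounded $u$ if and only if there is a $\partial$-closed $(0,3)$-form $\rho$ such that $\dbar\eta-\rho$ is bounded. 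A $\partial$-closed $(0,3)$-form has the shape $\bar f\,d\bar z^1\wedge d\bar z^2\wedge d\bar z^3$ with $f$ holomorphic on $\D^3$. So the requirement is an $\eta$ with $\partial\eta$ bounded, but such that
\[
(1-|z_1|^2)(1-|z_2|^2)(1-|z_3|^2)\,\bigl|g-\bar f\bigr|
\]
is unbounded for every holomorphic $f$, where $\dbar\eta=g\,d\bar z^{123}$.

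\textbf{Step 3: choice of $\eta$ (the main obstacle).} A naive choice fails. For example, $\eta=\varphi(z_1)\,d\bar z^2\wedge d\bar z^3$ with $\varphi$ real makes $|\partial\varphi|$ and $|\dbar\varphi|$ comparable, so boundedness of $\delta$ forces boundedness of $\dbar\eta$. One therefore has to exploit the anisotropy between $\partial$ and $\dbar$.

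The first attempt would be $\eta=\psi(z)\,d\bar z^2\wedge d\bar z^3$ with $\psi$ nearly antiholomorphic in $z_1$ but not globally of the form $\bar f$. An example is
\[
\psi=\overline{F(z_1)}\,\chi(z_2,z_3),
\]
with $F$ holomorphic of fast boundary growth and $\chi$ a real cutoff that varies slowly in the hyperbolic metric. Then $\partial\psi=\overline{F}\,\partial\chi$ is small, while $\dbar\psi\approx\overline{F'}\chi\,d\bar z^1$ is large. The non-holomorphic factor $\chi$ should prevent subtraction of any $\bar f$: on the region where $\chi=0$, $f$ would have to be essentially bounded after weighting, while on the region where $\chi=1$ it would have to match $F'$. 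A Cauchy/maximum-principle estimate along slices $\{z_1\}\times\D^2$ then gives a contradiction.

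Balancing the growth of $F$ against the decay of $\partial\chi$, uniformly in the weights, is the delicate point. If this route fails, I would instead let $\chi$ depend on $z_1$ through a sequence of disjoint hyperbolic balls escaping to the boundary, and argue by a Liouville-type unboundedness along that sequence.
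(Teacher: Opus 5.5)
Your Steps 1 and 2 are sound, apart from a harmless slip: $d(\theta\wedge\widetilde\omega)=\widetilde\omega^{\,2}$, so there is no factor $2$. The reduction is also correct: for bounded $\delta=\partial\eta$, the equation \eqref{eq:top-row} for $\gamma_1=\gamma_0+\delta$ is solvable if and only if $\dbar\eta$ is bounded modulo $\partial$-closed $(0,3)$-forms.

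\textbf{The gap.} The whole content of the proposition is Step 3, and there you never produce an $\eta$. Moreover, the ansatz you propose cannot work. Take $\eta=\overline{F(z_1)}\chi(z_2,z_3)\,d\bar z^2\wedge d\bar z^3$. Then $\partial\eta=\overline{F(z_1)}\,\partial\chi\wedge d\bar z^2\wedge d\bar z^3$, whose pointwise norm is $|F(z_1)|\,N(z_2,z_3)$ with $N$ independent of $z_1$.
\begin{itemize}
\item If $F$ is unbounded, boundedness forces $\partial\chi\equiv0$. Then the real function $\chi$ is constant, and $\dbar\eta=c\,\overline{F'}\,d\bar z^1\wedge d\bar z^2\wedge d\bar z^3$ is itself $\partial$-closed.
\item If $F$ is bounded, then $\dbar\eta=\overline{F'}\chi\,d\bar z^1\wedge d\bar z^2\wedge d\bar z^3$, since the $\dbar\chi$-terms die against $d\bar z^2\wedge d\bar z^3$. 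The Schwarz--Pick bound $|F'(z_1)|(1-|z_1|^2)\le\sup|F|$ makes $\dbar\eta$ bounded, so $u=\dbar\eta$ solves the equation.
\end{itemize}
Either way $\gamma_1$ is unobstructed. The underlying problem is that you try to generate the obstruction from a nearly antiholomorphic coefficient $\overline{F'}\chi$, which is exactly what can be subtracted. The ``escaping balls plus Liouville'' fallback is not specified enough to evaluate. With holomorphic $F$ it likely runs into the maximum principle and Cauchy estimates on each ball, since boundedness of $\partial\eta$ caps $|F|$ wherever $\partial\chi\neq0$.

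\textbf{The missing idea: a flat factor.} This is what the paper uses. Take $X=E\times C_1\times C_2$, so $\widetilde X=\C_z\times\D_w\times\D_\zeta$ with the flat metric times Poincar\'e metrics. Take $\gamma_0$ from the bounded primitive $2\widetilde\alpha\wedge\theta+\theta\wedge\widetilde\kappa$, which again has $\dbar\gamma_0=0$. Your Step 2 then works verbatim with $\eta=z\bar w\,d\bar z\wedge d\bar\zeta$:
\begin{itemize}
\item $\delta=\partial\eta=\bar w\,dz\wedge d\bar z\wedge d\bar\zeta$ is bounded because $|w|<1$; this is exactly the paper's $\delta$.
\item $\dbar\eta=-z\,d\bar z\wedge d\bar w\wedge d\bar\zeta$.
\item A bounded correction would give, on the slice $w=\zeta=0$, an entire $f_0$ with $|z+\overline{f_0(z)}|$ bounded on $\C$. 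This is impossible: Liouville forces $f_0$ to be affine, or one can use the paper's Stokes estimate on discs $D_R$.
\end{itemize}

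\textbf{Why your choice of $X$ makes this harder.} On $\D^3$ the analogous choice $\eta=z_1\bar z_2\,d\bar z^1\wedge d\bar z^3$ is unobstructed, since the norm of $\dbar\eta$ is a constant multiple of $|z_1|\prod_j(1-|z_j|^2)\le1$. The hyperbolic weights absorb polynomial-type growth. An example on $C_1\times C_2\times C_3$ can be built, but only with a genuinely different local mechanism. One option is profiles $\chi(|w|^2)\,\bar w\log(|w|^2+\epsilon_k^2)$:
\begin{itemize}
\item their $\partial$ is uniformly bounded;
\item the circle means of their $\bar w$-derivative change by an amount of order $|\log\epsilon_k|$ between radius $0$ and radius $1/2$, whereas circle means of an antiholomorphic function are constant;
\item they are transplanted by disc automorphisms to disjoint balls escaping to the boundary.
\end{itemize}
Nothing of this kind is in your proposal.
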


\begin{proof}
Let $E$ be an elliptic curve with a flat K\"ahler form $\alpha$, and let $C_1,C_2$ be compact curves of genus at least two. Put
\[
Y=C_1\times C_2\qquad\textrm{and}\qquad X=E\times Y.
\]
Choose the product K\"ahler hyperbolic form $\kappa$ on $Y$ from \cref{lem:curves-KH}, and let $\theta$ be a bounded one-form on $\widetilde Y$ with $d\theta=\widetilde\kappa$. Put
\[
\omega=\alpha+\kappa.
\]
Since $\alpha^2=0$ on the curve $E$, one has
\[
\widetilde\omega^{\,2}=2\widetilde\alpha\wedge\widetilde\kappa+\widetilde\kappa^{\,2}=d\bigl(2\widetilde\alpha\wedge\theta+\theta\wedge\widetilde\kappa\bigr).
\]
The displayed primitive is bounded, so $\omega$ is balanced hyperbolic. Denote this
primitive by $\Gamma_0$ and decompose it by type as
\[
\Gamma_0=u_0+\gamma_0+\bar\gamma_0+\bar u_0,\qquad u_0\in A^{0,3}_{\mathrm b}(\widetilde X),\quad\gamma_0\in A^{1,2}_{\mathrm b}(\widetilde X).
\]
The type components of $d\Gamma_0=\widetilde\omega^{\,2}$ give
\[
\widetilde\omega^{\,2}=\partial\gamma_0+\dbar\bar\gamma_0\qquad\textrm{and}\qquad\partial u_0=-\dbar\gamma_0.
\]

Identify the universal cover with
\[
\widetilde X=\C_z\times\D_w\times\D_\zeta
\]
and use the product of the flat metric on $\C$ and the Poincar\'e metrics on the two
discs. Consider
\[
\delta:=\bar w\,dz\wedge d\bar z\wedge d\bar\zeta\in A^{1,2}_{\mathrm b}(\widetilde X).
\]
This form is bounded (the coordinate $w$ is bounded and the Poincar\'e norm of
$d\bar\zeta$ is bounded), and a direct differentiation gives
\[
\partial\delta=0\qquad\textrm{and}\qquad\dbar\bar\delta=0.
\]
Thus $\gamma_1:=\gamma_0+\delta$ is another bounded symmetric potential for $\widetilde\omega^{\,2}$.

Suppose that a bounded $u_1\in A^{0,3}_{\mathrm b}(\widetilde X)$ satisfied $\partial u_1=-\dbar\gamma_1$. Then $v:=u_1-u_0$ would be bounded and would satisfy
\begin{equation}\label{eq:bad-null-potential}
\partial v=-\dbar\delta.
\end{equation}
Write
\[
v=h(z,w,\zeta)\,d\bar z\wedge d\bar w\wedge d\bar\zeta.
\]
Since
\[
\dbar\delta=dz\wedge d\bar z\wedge d\bar w\wedge d\bar\zeta,
\]
the $dz$-coefficient in \eqref{eq:bad-null-potential} gives
\[
\frac{\partial h}{\partial z}=-1.
\]
On the slice $w=\zeta=0$, boundedness of $v$ implies that $h_0(z):=h(z,0,0)$ is bounded on $\C$. On the other hand, $\partial h_0/\partial z=-1$. Integrating over the Euclidean disc $D_R\subset\C$ and applying Stokes' theorem gives
\[
\pi R^2=\left|\int_{D_R}\frac{\partial h_0}{\partial z}\,dA\right|\leq C R\,\|h_0\|_{L^{\infty}(\C)},
\]
with an absolute constant $C$, which is impossible as $R\to\infty$. Therefore no bounded $u_1$ exists.

In particular, taking $\widetilde\Omega=0$ shows directly that both $0$ and the non-zero class $[\dbar\delta]$ can occur in $\mathfrak G_{\infty}(0)$; the obstruction set need not be a singleton.
\end{proof}

\begin{remark}\label{rem:formal-green}
Assume $n\geq3$. Fix a bounded potential $\gamma$ and put $F:=-\dbar\gamma$. Then $\partial F=0$. Suppose, in addition to the hypotheses above, that $F$ lies in a function space on which a $\partial$-Hodge decomposition
\[
F=\mathcal H_{\partial}F+\partial\partial^*G_{\partial}F+\partial^*\partial G_{\partial}F
\]
is valid, that $\mathcal H_{\partial}F=0$, that $G_{\partial}$ commutes with $\partial$ on $F$, and that $\partial^*G_{\partial}F$ is bounded. Since $\partial F=0$, these assumptions give $\partial G_{\partial}F=0$ and hence
\[
F=\partial\partial^*G_{\partial}F.
\]
Thus
\begin{equation}\label{eq:formal-green-solution}
u=\partial^*G_{\partial}F=-\partial^*G_{\partial}(\dbar\gamma)
\end{equation}
solves \eqref{eq:top-row}. The unresolved issue is precisely the availability of such a
global decomposition and, above all, the boundedness estimate in \eqref{eq:formal-green-solution}. Neither follows from boundedness of $\gamma$ alone. For example, one sufficient analytic package is the existence of some $p>2n$ such that $F\in L^p$, $G_{\partial}:L^p\to W^{2,p}$ is bounded on the relevant complement of harmonic forms, and the global embedding $W^{1,p}\hookrightarrow L^{\infty}$ holds; then \eqref{eq:formal-green-solution} lies in $W^{1,p}\subset L^{\infty}$.
\end{remark}

\begin{criterion}[Compact residual criterion]\label{crit:compact-residual}
Assume $n\geq3$. Let $p:\widetilde X\to X$ be the universal covering and let $\gamma$ be a bounded symmetric potential. Suppose there exist
\[
U\in A^{n-3,n}_{\mathrm b}(\widetilde X),\qquad R\in A^{n-2,n}(X),\qquad\partial R=0,
\]
such that
\begin{equation}\label{eq:compact-residual}
-\dbar\gamma=\partial U-p^*R.
\end{equation}
If $[R]_{\partial}=0$ in $H^{n-2,n}_{\partial}(X)$, then \eqref{eq:top-row} has a bounded solution. In particular, the conclusion holds whenever $H^{n-2,n}_{\partial}(X)=0$, equivalently whenever $H^{0,2}_{\dbar}(X)=0$.
\end{criterion}

\begin{proof}
Choose $v\in A^{n-3,n}(X)$ with $\partial v=R$. Since $X$ is compact, the lift $p^*v$ is bounded. Equation \eqref{eq:compact-residual} then gives
\[
u:=U-p^*v\in A^{n-3,n}_{\mathrm b}(\widetilde X),\qquad\partial u=-\dbar\gamma.
\]
The final equivalence follows from complex conjugation and Serre duality: $H^{n-2,n}_{\partial}(X)$ is conjugate to $H^{n,n-2}_{\dbar}(X)$, whose dual is $H^{0,2}_{\dbar}(X)$.
\end{proof}

\begin{corollary}\label{cor:bounded-right-inverse}
Let $\omega$ be a balanced metric whose $(n-1)$-th power is $(\partial+\dbar)$-bounded on the universal cover; that is, $\omega$ is Gauduchon hyperbolic. For $n\geq3$, the following are equivalent:
\begin{enumerate}[label=\textup{(\roman*)}]
\item $\omega$ is balanced hyperbolic;
\item $0\in\mathfrak G_{\infty}(\widetilde\omega^{\,n-1})$;
\item $\operatorname{Ob}_{\infty}(\widetilde\omega^{\,n-1})=0$;
\item there is at least one bounded symmetric potential $\gamma$ for which
      $\dbar\gamma\in\partial A^{n-3,n}_{\mathrm b}(\widetilde X)$.
\end{enumerate}
Thus any one of the following is sufficient: a bounded right inverse for $\partial$ on one such datum, the hypotheses of \cref{rem:formal-green}, or the compact residual condition of \cref{crit:compact-residual}. Solvability for every bounded potential is neither required nor true in general, by \cref{prop:prescribed-potential-failure}.
\end{corollary}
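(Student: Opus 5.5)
The plan is to deduce all four equivalences directly from \cref{prop:top-row} and \cref{prop:affine-obstruction}, applied with $\Omega:=\omega^{n-1}$. Since $\omega$ is balanced, $\Omega$ is a real $d$-closed $(n-1,n-1)$-form. The Gauduchon hyperbolic hypothesis says exactly that $\widetilde\Omega$ admits a bounded $(\partial+\dbar)$-potential. After the symmetrization step in the proof of \cref{prop:top-row}, this becomes a bounded symmetric potential $\gamma\in A^{n-2,n-1}_{\mathrm b}(\widetilde X)$ as in \eqref{eq:symmetric-GH}. Hence the set $\mathfrak G_\infty(\widetilde\omega^{\,n-1})$ is non-empty, and the quotient class $\operatorname{Ob}_\infty$ is defined.

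The cycle of equivalences runs as follows.
\begin{enumerate}
\item For (i)$\Leftrightarrow$(ii): by definition, $\omega$ is balanced hyperbolic exactly when $\widetilde\Omega$ has a bounded $d$-primitive. Taking the real part of any bounded primitive gives a bounded real primitive, so we may assume the primitive is real. \Cref{prop:top-row} then gives the equivalence with $0\in\mathfrak G_\infty$.
\item For (ii)$\Leftrightarrow$(iii): this is the final assertion of \cref{prop:affine-obstruction}, since $\mathfrak G_\infty=[\dbar\gamma_0]+\operatorname{Im}\Phi$ contains $0$ exactly when $[\dbar\gamma_0]\in\operatorname{Im}\Phi$.
\item For (ii)$\Leftrightarrow$(iv): this is the definition of the quotient $\mathcal Q^{n-2,n}_{\partial,\mathrm b}$. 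A class $[\dbar\gamma]$ vanishes if and only if $\dbar\gamma\in\partial A^{n-3,n}_{\mathrm b}(\widetilde X)$. The sign in \eqref{eq:top-row} is absorbed by replacing $u$ with $-u$.
\end{enumerate}

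For the sufficiency list, each hypothesis produces, for one particular bounded symmetric potential $\gamma$, a bounded $u$ with $\partial u=-\dbar\gamma$, which is condition (iv).
\begin{enumerate}
\item A bounded right inverse for $\partial$ applied to the datum $-\dbar\gamma$ gives such a $u$ directly.
\item Under the hypotheses of \cref{rem:formal-green}, formula \eqref{eq:formal-green-solution} gives such a $u$.
\item Under the compact residual condition, \cref{crit:compact-residual} gives such a $u$.
\end{enumerate}
The closing sentence, that solvability for every bounded potential is neither required nor true in general, is the content of \cref{prop:prescribed-potential-failure}. Its threefold example $E\times C_1\times C_2$ carries a balanced hyperbolic, hence Gauduchon hyperbolic, metric together with a bounded symmetric potential $\gamma_1$ for which \eqref{eq:top-row} has no bounded solution.

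The only point needing care is the reduction to a real primitive in (i)$\Rightarrow$(ii). Since $\widetilde\Omega$ is real, if $d\Gamma=\widetilde\Omega$ then also $d\operatorname{Re}\Gamma=\widetilde\Omega$, and $|\operatorname{Re}\Gamma|\le|\Gamma|$, so the real part is still bounded. Beyond this, the argument is bookkeeping of results already proved, and I expect no genuine obstacle.
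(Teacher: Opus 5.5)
Your proposal is correct and matches the paper's proof: the paper also reads (i)$\Leftrightarrow$(ii)$\Leftrightarrow$(iv) off \cref{prop:top-row} and (iii) off \cref{prop:affine-obstruction}, and it treats the sufficiency list and closing sentence as the cited criteria and \cref{prop:prescribed-potential-failure}. Your remark that the real part of a bounded primitive is again a bounded real primitive is a harmless detail that the paper leaves implicit.
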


\begin{proof}
The equivalence of \textup{(i)}, \textup{(ii)}, and \textup{(iv)} is \cref{prop:top-row}; the equivalence with \textup{(iii)} is \cref{prop:affine-obstruction}. The remaining statements are the indicated sufficient criteria.
\end{proof}

Thus the general problem of passing from Gauduchon hyperbolicity to balanced hyperbolicity is equivalent to the vanishing of the canonical class $\operatorname{Ob}_{\infty}(\widetilde\omega^{\,n-1})$. The preceding counterexample shows that this cannot be proved by solving the top-row equation for an arbitrary bounded potential. The currently available assumptions provide neither boundedness of $\dbar\gamma$ nor a global $L^{\infty}$ right inverse for $\partial$. Thus the general vanishing problem remains open.

\section*{Acknowledgments}

The counterexample was constructed with the assistance of the gpt-5.6-sol model and subsequently carefully verified by the authors for mathematical correctness. The second author was supported by the NSFC, Grant No.~12271275.

\bigskip

\noindent
\textsc{Jixiang Fu}\\
Shanghai Center for Mathematical Sciences\\
Fudan University\\
Shanghai 200433, People's Republic of China\\
Email:
\href{mailto:majxfu@fudan.edu.cn}
{\texttt{majxfu@fudan.edu.cn}}
     
\medskip

\noindent
\textsc{Jingcao Wu}\\
School of Mathematics\\
Shanghai University of Finance and Economics\\
Shanghai 200433, People's Republic of China\\
Email:
\href{mailto:wujincao@shufe.edu.cn}
     {\texttt{wujincao@shufe.edu.cn}}


\begin{thebibliography}{99}

\bibitem{AB90}
L.~Alessandrini and G.~Bassanelli,
\emph{Small deformations of a class of compact non-K\"ahler manifolds},
Proc. Amer. Math. Soc. \textbf{109} (1990), no.~4, 1059--1062.
\href{https://doi.org/10.1090/S0002-9939-1990-1012922-5}{doi:10.1090/S0002-9939-1990-1012922-5}.

\bibitem{AU17}
D.~Angella and L.~Ugarte,
\emph{On small deformations of balanced manifolds},
Differential Geom. Appl. \textbf{54} (2017), Part~B, 464--474.

\bibitem{BDT25}
F.~Bei, S.~Diverio, and S.~Trapani,
\emph{Geometric effects of hyperbolic cohomology classes on K\"ahler manifolds},
with an appendix by B.~Claudon,
arXiv:2506.09907.

\bibitem{BKS24}
M.~Brunnbauer, D.~Kotschick, and L.~Sch\"onlinner,
\emph{On atoroidal and hyperbolic cohomology classes},
Topology Appl. \textbf{344} (2024), Article 108830.
\href{https://doi.org/10.1016/j.topol.2024.108830}{\nolinkurl{doi:10.1016/j.topol.2024.108830}}.

\bibitem{CY18}
B.-L.~Chen and X.~Yang,
\emph{Compact K\"ahler manifolds homotopic to negatively curved Riemannian manifolds},
Math. Ann. \textbf{370} (2018), no.~3--4, 1477--1489.
\href{https://doi.org/10.1007/s00208-017-1521-7}{doi:10.1007/s00208-017-1521-7}.

\bibitem{FWW26}
J.~Fu, H.~Wang, and J.~Wu,
\emph{On the birational invariance of balanced hyperbolic manifolds},
Science China Math. (2026), published online.
\href{https://doi.org/10.1007/s11425-025-2509-1}{doi:10.1007/s11425-025-2509-1}.

\bibitem{FY11}
J.~Fu and S.-T.~Yau,
\emph{A note on small deformations of balanced manifolds},
C. R. Math. Acad. Sci. Paris \textbf{349} (2011), no.~13--14, 793--796.
\href{https://doi.org/10.1016/j.crma.2011.06.023}{doi:10.1016/j.crma.2011.06.023}.

\bibitem{Gro91}
M.~Gromov,
\emph{K\"ahler hyperbolicity and $L_2$-Hodge theory},
J. Differential Geom. \textbf{33} (1991), no.~1, 263--292.

\bibitem{HX25}
Y.~Hu and W.~Xia,
\emph{Deformed Aeppli cohomology: canonical deformations and jumping formulas},
Canadian J. Math., First View (2025), 1--34.
\href{https://doi.org/10.4153/S0008414X2510117X}{\nolinkurl{doi:10.4153/S0008414X2510117X}}.

\bibitem{Khe25KH}
A.~Khelifati,
\emph{Holomorphic Deformations of Compact K\"ahler Hyperbolic Manifolds},
arXiv:2508.07096v2, 27 August 2025.

\bibitem{Khe26}
A.~Khelifati,
\emph{Deformations of non-K\"ahler hyperbolicity notions and modifications of
degenerate balanced manifolds},
Int. J. Math. (2026), Article 2650068, 33 pp.
\href{https://doi.org/10.1142/S0129167X26500680}{\nolinkurl{doi:10.1142/S0129167X26500680}}.

\bibitem{KS60}
K.~Kodaira and D.~C.~Spencer,
\emph{On deformations of complex analytic structures. III. Stability theorems for
complex structures},
Ann. of Math. (2) \textbf{71} (1960), 43--76.

\bibitem{LRWW25}
M.-L.~Li, S.~Rao, K.~Wang, and M.-J.~Wang,
\emph{Smooth deformation limit of Moishezon manifolds is Moishezon},
arXiv:2407.02022v2, 24 March 2025.

\bibitem{LS26}
K.~Liu and Y.~Shen,
\emph{Sections of Hodge bundles II: deformation of $(p,p)$-classes and applications
to K\"ahler geometry},
arXiv:2602.13951, 2026.

\bibitem{Mic82}
M.~L.~Michelsohn,
\emph{On the existence of special metrics in complex geometry},
Acta Math. \textbf{149} (1982), 261--295.

\bibitem{MP23}
S.~Marouani and D.~Popovici,
\emph{Balanced hyperbolic and divisorially hyperbolic compact complex manifolds},
Math. Res. Lett. \textbf{30} (2023), no.~6, 1813--1855.
\href{https://doi.org/10.4310/MRL.2023.v30.n6.a7}{\nolinkurl{doi:10.4310/MRL.2023.v30.n6.a7}}.

\bibitem{Pop15}
D.~Popovici,
\emph{Aeppli cohomology classes associated with Gauduchon metrics on compact complex
manifolds},
Bull. Soc. Math. France \textbf{143} (2015), no.~4, 763--800.

\bibitem{Pop19}
D.~Popovici,
\emph{Holomorphic deformations of balanced Calabi--Yau $\partial\bar\partial$-manifolds},
Ann. Inst. Fourier (Grenoble) \textbf{69} (2019), no.~2, 673--728.
\href{https://doi.org/10.5802/aif.3254}{doi:10.5802/aif.3254}.

\bibitem{RT21}
S.~Rao and I.-H.~Tsai,
\emph{Deformation limit and bimeromorphic embedding of Moishezon manifolds},
Commun. Contemp. Math. \textbf{23} (2021), no.~8, Paper No.~2050087;
arXiv:1901.10627v3.

\bibitem{RT22}
S.~Rao and I.-H.~Tsai,
\emph{Invariance of plurigenera and Chow-type lemma},
Asian J. Math. \textbf{26} (2022), no.~4, 507--554;
arXiv:2011.03306v2.

\bibitem{Sch07}
M.~Schweitzer,
\emph{Autour de la cohomologie de Bott--Chern},
arXiv:0709.3528, 2007.

\bibitem{Sfe22}
T.~Sferruzza,
\emph{Deformations of balanced metrics},
Bull. Sci. Math. \textbf{178} (2022), Article 103143.
\href{https://doi.org/10.1016/j.bulsci.2022.103143}{doi:10.1016/j.bulsci.2022.103143}.

\bibitem{Wu06}
C.-C.~Wu,
\emph{On the geometry of superstrings with torsion},
Ph.D. thesis, Harvard University, 2006.

\bibitem{Xia25}
T.~Xia,
\emph{The deformation of the balanced cone and its degeneration},
Differential Geom. Appl. \textbf{98} (2025), Article 102225.
\href{https://doi.org/10.1016/j.difgeo.2024.102225}{doi:10.1016/j.difgeo.2024.102225}.


\end{thebibliography}
\end{document}